\documentclass[11pt,a4paper]{article}
\usepackage[utf8]{inputenc}
\usepackage[T1]{fontenc}
\usepackage{lmodern}
\usepackage[english]{babel}
\usepackage[margin=2.5cm]{geometry}
\usepackage{graphicx}
\usepackage{booktabs}
\usepackage{natbib}
\usepackage{amsmath,amssymb}
\usepackage{setspace}
\usepackage{caption}

\usepackage{amsthm,mathtools,bm}
\usepackage{microtype}
\usepackage{longtable}
\usepackage{float}
\usepackage{needspace}
\usepackage{array}
\usepackage{xcolor}
\usepackage[colorlinks=true,citecolor=blue,linkcolor=blue,urlcolor=blue]{hyperref}

\setcitestyle{round}
\renewcommand{\footnotesize}{\fontsize{9}{11}\selectfont}

\newtheorem{theorem}{Theorem}
\newtheorem{proposition}{Proposition}
\newtheorem{lemma}{Lemma}
\newtheorem{coro}{Corollary}
\newtheorem{rem}{Remark}

\newcommand{\E}{\mathbb{E}}
\newcommand{\PP}{\mathbb{P}}
\newcommand{\Var}{\operatorname{Var}}
\newcommand{\Cov}{\operatorname{Cov}}
\newcommand{\R}{\mathbb{R}}
\newcommand{\D}{\xrightarrow{\mathcal D}}
\newcommand{\Pto}{\xrightarrow{\mathbb P}}
\newcommand{\Ltwo}{\xrightarrow{L^2}}
\newcommand{\Op}{\mathrm O_{\mathbb P}}
\newcommand{\op}{\mathrm o_{\mathbb P}}
\newcommand{\diag}{\operatorname{diag}}

\title{Consistent intercept estimation and inference for unit-root INAR(2) processes}
\author{
    Yang Lu\thanks{Corresponding author: yang.lu@concordia.ca} \\
    Department of Mathematics and Statistics \\
    Concordia University \\
    Montreal, Canada
    \and
    M\'arton Isp\'any \\
    University of Debrecen \\
    Hungary
}
\date{}

\begin{document}
\maketitle

\begin{abstract}
\citet{barczy2014asymptotic} showed that the ordinary least-squares (OLS)
estimator of the innovation mean is inconsistent for a unit-root INAR(2)
process. We construct a consistent
intercept estimator using inverse-time weighted least squares (WLS) and derive
its mixed-rate asymptotics: the intercept estimator is asymptotically normal at
rate $\sqrt{\log n}$, with a Gaussian limit independent of the autoregressive
limits, while the autoregressive estimators retain their OLS rates. We develop
an OLS unit-root test calibrated with WLS nuisance estimates. Under the
maintained unit root, residual-score Gaussian intervals for the innovation
mean and long-run drift remain asymptotically valid after test
nonrejection. Simulations show that WLS reduces root mean squared error for
both quantities relative to OLS, with further gains from imposing the unit
root. An application to two Canadian flood inventories shows that conclusions
about persistence depend on the inventory and weighting offset.
\end{abstract}

\textbf{Keywords:} INAR(2); unit root; weighted least squares; intercept estimation; count time series.

\textbf{Mathematics Subject Classification (2020):} Primary 62M10; Secondary 62F12, 60J80.

\textbf{Acknowledgements:} We thank Mátyás Barczy for helpful comments.

\section{Introduction}
Count time series often arise in settings where the underlying environment
evolves, making a stationary specification with a constant long-run mean
implausible. Natural-disaster frequencies are a salient example: shifts in
climate, exposure, and reporting practice can produce persistent changes in
the recorded frequencies of floods, wildfires, and storms
\citep{pei2023forecasting}.
\needspace{4\baselineskip}
The literature on nonstationary count processes remains relatively limited,
with recent contributions focusing mainly on first-order Markov models
\citep{liu2023asymptotic,barreto-souza2023,peng2024note,lu2026intercept,lu2026stateweights,unitrootaffine}.
Beyond this class, \citet{michel2020limiting} studies a unit-root
INGARCH(1,1) process, with a focus on probabilistic properties rather than
parameter estimation and inference. \citet{barczy2014asymptotic} (henceforth
BIP) study parameter estimation for the unit-root INAR(2) model, showing in
particular that the ordinary least-squares (OLS) intercept estimator is
inconsistent.
Consider the INAR(2) model
\[
X_k=\alpha\circ X_{k-1}+\beta\circ X_{k-2}+\varepsilon_k,
\qquad
\alpha,\beta\in(0,1),
\qquad
\varrho:=\alpha+\beta\leq1,
\]
where $\circ$ denotes binomial thinning. All Bernoulli variables used in the
binomial thinnings are mutually independent and independent of the i.i.d.
sequence $(\varepsilon_k)$ of nonnegative integer-valued innovations with
positive mean $\mu:=\E(\varepsilon_1)$.
This inconsistency matters because, in the unit-root case $\varrho=1$,
$\mu$ enters the unit-root scaling limit and, together with $\beta$,
determines the long-run drift, defined as the limiting expected increment:
\begin{equation}
g:=\lim_{k\to\infty}\{\E(X_k)-\E(X_{k-1})\}
=\frac{\mu}{1+\beta}.
\label{eq:long-run-drift}
\end{equation}

Related intercept-estimation problems arise in first-order integrated
Galton--Watson processes. \citet{wei1990estimation} established consistency
of state-weighted WLS, but derived the intercept's asymptotic distribution
only under transience. \citet{lu2026intercept} introduced deterministic
inverse-time weighting, obtaining a common Gaussian intercept limit across
the transient and recurrent regimes. Extending this approach to unit-root
INAR(2) requires accommodating both thinning randomness and a stable
autoregressive direction. This combination is absent from unit-root INAR(1),
where thinning is deterministic.

In this paper, we develop a unified procedure for estimation, unit-root
testing, and inference in the INAR(2) model using inverse-time weighting.
We propose an unrestricted WLS estimator based on the inverse-time
weights $(k+\ell)^{-1}$ for a fixed weighting offset $\ell\geq0$,
and establish a joint mixed-rate limit: the intercept estimator is consistent
and asymptotically normal at rate $\sqrt{\log n}$, while the persistence and
individual autoregressive estimators retain their OLS rates. The intercept
limit is independent of the autoregressive limits.

We also consider constrained WLS estimation of $(\beta,\mu)$ under the
unit-root restriction. When the restriction is correct, the constrained and
unrestricted estimators have the same joint first-order limit, and we identify
the leading difference between the two intercept estimators. Under a fixed
stationary alternative, unrestricted WLS remains consistent, whereas the
constrained estimator converges to a pseudo-true nuisance pair whose intercept
is zero. For unit-root testing, oracle comparisons favor the OLS statistic,
whose nonstandard null distribution we calibrate using WLS nuisance estimates.
The resulting feasible test is asymptotically valid under the unit root and
consistent against fixed stationary alternatives.

We then construct residual-score Gaussian inference for the innovation mean
and long-run drift under the maintained unit-root model, and show that
test nonrejection leaves its first-order validity intact. Simulations confirm
that WLS improves intercept estimation, with further gains when the unit-root
restriction is correct, and that residual-score intervals cover better than
plug-in intervals.

In an application to two Canadian flood inventories, the test rejects the
unit-root null for the Canadian Disaster Database at the reference offset
but not at the alternative offset. It does not reject for Historical Flood
Events at either offset. Conclusions about persistence therefore depend on
both the inventory and the weighting offset.

The remainder of the paper is organized as follows. Section~2 develops the
unrestricted and constrained WLS estimators, examines weight concentration and
offset sensitivity, and derives the estimators' probability limits under fixed
stationary alternatives. Section~3 treats unit-root testing and calibration,
and Section~4 inference under the maintained unit-root model. Section~5 reports
Monte Carlo evidence for estimation, testing, and inference. Section~6 applies
the methods to the flood data, and Section~7 concludes. All
proofs and supporting technical results are gathered in the Appendix.

\section{Weighted least-squares estimation}
\label{sec:wls-estimation}

Throughout the rest of the paper we take the initial condition
$(X_{-1},X_0)\in\mathbb Z_+^2$ to be fixed, impose the moment condition
$\E(\varepsilon_1^8)<\infty$ used by BIP, and treat the weighting offset
$\ell\geq0$ as fixed and chosen before the data are examined.

Following BIP, we use the parameterization $(\varrho,\beta,\mu)$ and define
\[
V_k:=X_k-X_{k-1},
\qquad
M_k:=X_k-\alpha X_{k-1}-\beta X_{k-2}-\mu.
\]
The sequence $(M_k)$ is a martingale difference sequence with respect to
the natural filtration $\mathcal F_k:=\sigma(X_{-1},X_0,\ldots,X_k)$, and the
model can be written as
\begin{equation}
X_k=\varrho X_{k-1}-\beta V_{k-1}+\mu+M_k.
\label{eq:canonical}
\end{equation}
In this parameterization, $\varrho$ measures total persistence and $-\beta$ is
the coefficient of the lagged difference. The OLS and unrestricted WLS
estimators below use equation~\eqref{eq:canonical}, with regressor vector
$(X_{k-1},-V_{k-1},1)$. The OLS estimator is defined by
\[
(\widehat\varrho_n,\widehat\beta_n,\widehat\mu_n)
=\arg \min_{(\varrho,\beta,\mu)\in\R^3}
\sum_{k=1}^n
\bigl(X_k-\varrho X_{k-1}+\beta V_{k-1}-\mu\bigr)^2.
\]
At $\varrho=1$, BIP show that $\widehat\varrho_n$ and
$\widehat\beta_n$ converge at rates $n$ and $\sqrt n$, respectively, whereas
the OLS estimator
$\widehat\mu_n$ of $\mu$ is inconsistent.
\subsection{The unrestricted WLS estimator}
\label{sec:unrestricted-wls}

For a fixed weighting offset $\ell\geq0$, write
$w_{k,\ell}:=(k+\ell)^{-1}$. The unrestricted WLS estimator of
$(\varrho,\beta,\mu)$ is
\begin{equation}
(\widetilde\varrho_{n,\ell},\widetilde\beta_{n,\ell},
 \widetilde\mu_{n,\ell})
=\arg \min_{(\varrho,\beta,\mu)\in\R^3}
\sum_{k=1}^n w_{k,\ell}
\bigl(X_k-\varrho X_{k-1}+\beta V_{k-1}-\mu\bigr)^2.
\label{eq:wls-criterion}
\end{equation}
The inverse-time form is suggested by the conditional variance: under the
unit-root restriction,
\[
\Var(X_k\mid\mathcal F_{k-1})
=\alpha\beta(X_{k-1}+X_{k-2})+\Var(\varepsilon_1),
\]
whose expectation grows linearly with $k$.

The normal equations give
\begin{equation}
\begin{pmatrix}
\widetilde\varrho_{n,\ell}-\varrho\\[1mm]
\widetilde\beta_{n,\ell}-\beta\\[1mm]
\widetilde\mu_{n,\ell}-\mu
\end{pmatrix}
=
\bigl(A_{n,\ell}^{(w)}\bigr)^{-1}d_{n,\ell}^{(w)},
\label{eq:normal-canonical}
\end{equation}
where
\[
A_{n,\ell}^{(w)}:=\sum_{k=1}^n w_{k,\ell}
\begin{pmatrix}
X_{k-1}^2 & -X_{k-1}V_{k-1} & X_{k-1}\\
-X_{k-1}V_{k-1} & V_{k-1}^2 & -V_{k-1}\\
X_{k-1} & -V_{k-1} & 1
\end{pmatrix},
\qquad
d_{n,\ell}^{(w)}:=\sum_{k=1}^n w_{k,\ell}M_k
\begin{pmatrix}
X_{k-1}\\
-V_{k-1}\\
1
\end{pmatrix}.
\]

\begin{rem}
The WLS estimator is uniquely defined only on the event that the positive
semidefinite matrix $A_{n,\ell}^{(w)}$ is invertible. Under the conditions of
Theorem~\ref{thm:main}, the probability of noninvertibility converges to zero
for every fixed $\ell$.
\end{rem}

The exact-null theory below imposes $\varrho=1$ unless explicitly stated
otherwise.
By \citet[Theorem~3.1]{barczy2011asymptotic}, the rescaled process
$(n^{-1}X_{\lfloor nt\rfloor})_{t\geq0}$ converges weakly to
$(\mathcal X_t)_{t\geq0}$, which solves the stochastic differential equation
\begin{equation}
d\mathcal X_t
=
\frac{\mu}{1+\beta}\,dt
+
\frac{\sqrt{2\alpha\beta\,\mathcal X_t}}{1+\beta}\,dW_t,
\qquad
\mathcal X_0=0,
\label{eq:CIR}
\end{equation}
where $W$ is a standard Wiener process. The limiting diffusion admits the
representation $\mathcal X_t=cR_t$, where $c:=\alpha\beta/[2(1+\beta)^2]$
and $R$ is a squared-Bessel process of dimension
$\delta:=2\mu(1+\beta)/(\alpha\beta)>0$, satisfying
\begin{equation}
dR_t=\delta\,dt+2\sqrt{R_t}\,dW_t,
\qquad R_0=0.
\label{eq:squared-bessel-process}
\end{equation}
The limiting process is transient for $\delta>2$ and recurrent for
$0<\delta\leq2$. We distinguish the boundary case $\delta=2$, which does not
hit zero from a positive starting point. Throughout the paper, we refer to
$\delta>2$, $\delta=2$, and $0<\delta<2$ as the transient, boundary, and
recurrent regimes, respectively.

Write $H_{n,\ell}:=\sum_{k=1}^n(k+\ell)^{-1}$ for the sum of WLS weights.
For every fixed $\ell\geq0$, $H_{n,\ell}=\log n+\mathrm O(1)$, so
$\sqrt{H_{n,\ell}}$ is asymptotically equivalent to $\sqrt{\log n}$.

\begin{theorem}
\label{thm:main}
Under the standing assumptions with $\varrho=1$,
\begin{equation}
\begin{pmatrix}
n(\widetilde\varrho_{n,\ell}-1)\\[2mm]
\sqrt n\,(\widetilde\beta_{n,\ell}-\beta)\\[2mm]
\sqrt{H_{n,\ell}}\,(\widetilde\mu_{n,\ell}-\mu)
\end{pmatrix}
\D
\begin{pmatrix}
\Psi_w\\[2mm]
\Xi_w\\[2mm]
Z
\end{pmatrix},
\label{eq:main-canonical}
\end{equation}
where
\begin{equation}
\begin{aligned}
\Psi_w
&:=\frac{\sqrt{2\alpha\beta}\int_0^1 t^{-1}\mathcal X_t^{3/2}\,dW_t}{I_2},
& I_2&:=\int_0^1 \frac{\mathcal X_t^2}{t}\,dt,
\\
\Xi_w
&:=\sqrt{\alpha(1+\beta)}\,
\frac{\int_0^1 t^{-1}\mathcal X_t\,d\widetilde W_t}{I_1},
& I_1&:=\int_0^1 \frac{\mathcal X_t}{t}\,dt.
\end{aligned}
\label{eq:wls-limit-functionals}
\end{equation}
Here $\widetilde W$ is a standard Wiener process independent of $(\mathcal X,W)$.
The random variable $Z$ is independent of $(\mathcal X,W,\widetilde W)$ and
follows a $\mathcal N(0,\sigma_\mu^2)$ distribution, where
\[
\sigma_\mu^2:=\frac{2\alpha\beta\mu}{1+\beta}
=\frac{2(1-\beta)\beta\mu}{1+\beta}.
\]
\end{theorem}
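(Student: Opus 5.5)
The plan is to follow the BIP strategy and rotate the regressors onto the nonstationary and stationary directions. Set $U_k:=X_k+\beta X_{k-1}$. Under $\varrho=1$ this satisfies $U_k=U_{k-1}+\mu+M_k$, so it is a random walk with drift and martingale-difference increments. The difference $V_k$ is a stable AR(1), $V_k=-\beta V_{k-1}+\mu+M_k$. Since $X_{k-1}=(U_{k-1}+\beta V_{k-1})/(1+\beta)$, the regressor vector $(X_{k-1},-V_{k-1},1)$ is an invertible linear transform of $(U_{k-1},\widetilde V_{k-1},1)$, where $\widetilde V_{k-1}:=V_{k-1}-\E V_{k-1}$ is recentred; because $V_k$ has mean converging to $g=\mu/(1+\beta)$, the recentring is absorbed into the intercept column. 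I will therefore work with the reparameterized regression and the scaling matrix $D_n=\diag(n^{1/2}\cdot n^{1/2},\,n^{1/2},\,\sqrt{H_{n,\ell}})$, adjusted for the weights: the weighted Gram entries scale as $\sum w_k U_{k-1}^2\asymp n^2$, $\sum w_k \widetilde V_{k-1}^2\asymp \log n$?? Here I must be careful. Since $\E(M_k^2\mid\mathcal F_{k-1})\approx \alpha\beta(X_{k-1}+X_{k-2})\asymp k$, we get $\widetilde V_k^2\asymp k$, so $\sum w_k\widetilde V_{k-1}^2\asymp n$ and $\sum w_k\cdot 1=H_{n,\ell}$. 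The scalings are then $n^2$, $n$, and $\log n$, matching the rates $n$, $\sqrt n$, and $\sqrt{H_{n,\ell}}$.

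\textbf{Step 1 (Gram matrix).} I will show the diagonal limits
\[
n^{-2}\sum_{k=1}^n w_k U_{k-1}^2\to(1+\beta)^2I_2,
\qquad
n^{-1}\sum_{k=1}^n w_k\widetilde V_{k-1}^2\to \tfrac{\alpha\beta\cdot 2}{1-\beta^2}\,(1+\beta)\,I_1\cdot(\text{const}).
\]
The first follows from the functional limit $n^{-1}X_{\lfloor nt\rfloor}\Rightarrow\mathcal X_t$ and the continuous mapping theorem applied to $\int t^{-1}(\cdot)^2\,dt$, with a truncation near $t=0$ justified by moment bounds $\E X_k^2=\mathrm O(k^2)$. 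The second uses a weighted LLN for the stationary-type AR(1) driven by the heteroscedastic noise: $\widetilde V^2$ averages to $\Var(M_k\mid\cdot)/(1-\beta^2)$, which is proportional to $X_{k-1}$. This reduces the sum to $\sum w_kX_{k-1}$, and summing by parts gives the limit $\propto I_1$. The constants are then matched to $\sqrt{\alpha(1+\beta)}$.

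I then show the off-diagonal entries are negligible after scaling. The $U$–$\widetilde V$ cross term is $\mathrm o_{\PP}(n^{3/2})$ because of martingale cancellation in $\widetilde V$. The $U$–$1$ cross term is of order $n$, which is $\mathrm o(n\sqrt{\log n})$. The $\widetilde V$–$1$ cross term is $\sum w_k\widetilde V_{k-1}=\mathrm O_{\PP}(\sqrt{\log n})$, which is $\mathrm o(\sqrt{n\log n})$. The $U$–$1$ term is the delicate one: $n^{-1}\sum w_kU_{k-1}\to(1+\beta)I_1$ is not small relative to $n\cdot\sqrt{H}/\ldots$. Normalizing, however, the correlation is $n/\sqrt{n^2\log n}\to0$, so the scaled Gram matrix is asymptotically block-diagonal with a random invertible limit. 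This is exactly why the intercept decouples, unlike under OLS, where the intercept column has norm $\sqrt n$ comparable to $n^{-1/2}\sum U_{k-1}$.

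\textbf{Step 2 (score, main obstacle).} I need the joint limit of
\[
\Bigl(n^{-1}\sum w_kU_{k-1}M_k,\; n^{-1/2}\sum w_k\widetilde V_{k-1}M_k,\; H_{n,\ell}^{-1/2}\sum w_kM_k\Bigr),
\]
and I will get it from a martingale CLT with mixed normal / stochastic-integral limits, in the style of Jacod–Shiryaev, jointly with $n^{-1}X_{\lfloor n\cdot\rfloor}$.

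The first component converges to $\int t^{-1}\mathcal X_t\,d(\text{limit of } n^{-1/2}\sum M)$. Since the martingale part of $\mathcal X$ is $\sqrt{2\alpha\beta\mathcal X}\,dW$ scaled by $(1+\beta)^{-1}$, this yields $\sqrt{2\alpha\beta}\int t^{-1}\mathcal X^{3/2}\,dW$. The second component's quadratic covariation with $M$-partial sums vanishes in the limit, because $\widetilde V_{k-1}M_k$ is asymptotically orthogonal to $M_k$ (odd third moments of $\widetilde V$ vanish on average). Hence it produces an independent $\widetilde W$, with conditional variance $\propto\int t^{-1}\mathcal X_t^{2}\ldots$, consistent with the stated $\Xi_w$.

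The third component has conditional variance
\[
H^{-1}\sum w_k^2\,\alpha\beta(X_{k-1}+X_{k-2})\approx H^{-1}\sum k^{-2}\cdot 2\alpha\beta\, g\,k\to 2\alpha\beta\mu/(1+\beta).
\]
This uses a law of large numbers on a logarithmic scale, $H^{-1}\sum k^{-1}(X_{k-1}/k)\to g$, which I would prove via $X_k/k\to g$ a.s. or in $L^1$, or at least Cesàro on the log scale. The expected hard point is that this limit is deterministic even though $X_k/k$ is not ergodic at finite $t$. The plan is to use the scaling property of the squared-Bessel process: the log-averaged $\mathcal X_t/t$ over $t\in[e^{-s},1]$ satisfies an ergodic theorem for the stationary Ornstein–Uhlenbeck-type time-change, giving $\E\mathcal X_1=g$. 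For the independence of $Z$ I must show that the cross-covariations with the first two components vanish, and that the third is asymptotically independent of $\mathcal F$-measurable functionals of $\mathcal X$. Because its mass comes from ever-smaller windows $[k,ek]$ on the log scale, almost none of it lives at macroscopic times. I would use a nesting/stable convergence argument: split the sum at $k\le n/\log n$ versus larger $k$, note that the early part carries all the $Z$ variance but is independent in the limit of the macroscopic path (stable convergence with respect to the path's later increments), and note that the late part contributes $\mathrm o(1)$ after dividing by $\sqrt H$.

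\textbf{Step 3.} Combine Steps 1–2 via Slutsky and the continuous mapping theorem, then transform back from $(U,\widetilde V,1)$ coordinates to $(\varrho,\beta,\mu)$. The change of basis is triangular. The intercept picks up corrections from $\widetilde\varrho-1$ times $\sum w_kX$-type terms; these are of order $n^{-1}\cdot n/\log n\cdot\sqrt{\log n}\to0$ after scaling, and similarly for $\beta$ times the mean of $V$, which gives $\mathrm O(n^{-1/2}\sqrt{\log n})\to0$. Invertibility with probability tending to one follows from the positivity of $I_1$ and $I_2$.
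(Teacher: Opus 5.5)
Your architecture matches the paper's: an asymptotically diagonal scaled design, stochastic-integral limits for the two slope scores, and an early block carrying essentially all of the intercept-score variance. However, the two steps that make the intercept limit work are not delivered by the tools you name.

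The first is the logarithmic law of large numbers $H_n^{-1}\sum_{k\le n}X_{k-r}/k^2\to g$ ($r=1,2$), which gives the conditional variance $\sigma_\mu^2$. Your first two routes fail under the unit root. $X_k/k$ converges in law to the nondegenerate variable $\mathcal X_1$, so it converges to $g$ neither almost surely nor in $L^1$. The squared-Bessel scaling/ergodic route proves the corresponding statement only for the limit diffusion, and it does not transfer. For each fixed $\zeta\in(0,1)$, the indices $k\in[\zeta n,n]$ contribute $\Op(\log(1/\zeta))$, which vanishes after division by $H_n\sim\log n$. The average is therefore carried entirely by $k\le\zeta n$, where the scale-$n$ functional limit says nothing. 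Transferring it would need approximations at $\log n$ scales simultaneously plus decorrelation across scales, which is the discrete statement itself. The paper's Lemma~\ref{lem:weighted-V-k2} proves it with a direct second-moment bound. Write $X_k=m_k+(\widetilde U_k+\beta\widetilde V_k)/(1+\beta)$, with $\widetilde U_k=\sum_{j\le k}M_j$ and $\widetilde V_k=\sum_{j\le k}(-\beta)^{k-j}M_j$. After interchanging sums, the coefficient of $M_j$ in $\sum_k(X_{k-r}-m_{k-r})/k^2$ is $\mathrm O(1/j)$. Orthogonality and $\E(M_j^2)=\mathrm O(j)$ then give second moment $\mathrm O(H_n)=\mathrm o(H_n^2)$, while $\sum_k m_{k-r}/k^2=gH_n+\mathrm O(1)$. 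The same bound applies on an early block $k\le s_n$ with $H_{s_n}/H_n\to1$.

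The second gap is the independence of $Z$, which you assert rather than prove. Vanishing cross-variations do not by themselves give independence here. All of the intercept variance accrues at macroscopic times $t\le s_n/n\to0$, off the time scale on which the slope scores converge. Even granting mixing (stable) convergence of the early block $Z_n^*=H_n^{-1/2}\sum_{k\le s_n}M_k/k$, stability only yields joint convergence with variables that converge in probability, whereas the scaled path converges only in law. What must be shown is that the scaled path is asymptotically independent of $\mathcal F_{s_n}$. This is not automatic, because the evolution after $s_n$ starts from $(X_{s_n-1},X_{s_n})$, a function of the same innovations as $Z_n^*$.

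The paper supplies this decoupling with a labelled branching construction in Lemma~\ref{lem:joint-process-Rn}. It writes $X_k=\overline X_k+D_k$, where $\overline X$ (the immigrants after $s_n$ and their descendants) is exactly independent of $\mathcal F^{\mathrm{br}}_{s_n}$ and is a delayed fresh zero-start copy. The descendants $D$ of the $\Op(s_n)$ individuals present at times $s_n-1,s_n$ are extinct before $r_n$ with probability $1-\mathrm O(s_n/r_n)$, by the critical survival bound $q_j\le(1+\alpha\beta j)^{-1}$ of Lemma~\ref{lem:old-family-survival}. Doob's inequality then makes the original and barred scaled triples uniformly close, with $s_n=\lfloor n/(\log n)^2\rfloor$ and $r_n=\lfloor n/\log n\rfloor$. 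A Markov-property argument combined with a version of the BIP functional limit uniform over initial states $x_n$ with $x_n/n\to0$ could replace this, but some such decoupling must be proved. The same survival bound is also how the paper passes from zero start to an arbitrary fixed $(X_{-1},X_0)$, which your proposal does not address.

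A minor point: recentring $V_{k-1}$ by its time-varying mean $g\{1-(-\beta)^{k-1}\}$ is not an exact reparameterization of the regression. Centre by the constant $g$ instead, or simply keep the regressors $(X_{k-1},-V_{k-1},1)$, whose scaled design is already asymptotically diagonal.
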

	 
 \begin{rem}
 Squared-Bessel self-similarity gives
 $\E(\mathcal X_t^j)=\mathrm O(t^j)$ for $j=1,2,3$ as $t\downarrow0$.
 These bounds ensure that all integrals in
 \eqref{eq:wls-limit-functionals} are well defined at the origin.
 \end{rem}

\needspace{3\baselineskip}
Inverse-time weighting therefore restores consistency of the intercept
estimator, at the price of the slow $\sqrt{\log n}$ rate.

Finally, since $\alpha=\varrho-\beta$ and $\varrho$ and $\beta$ are estimated
at different rates,
\[
\sqrt n\{(\widetilde\varrho_{n,\ell}-\widetilde\beta_{n,\ell})-
\alpha\}\D-\Xi_w.
\]

\subsection{The constrained WLS estimator}
\label{sec:constrained-wls}

Imposing the unit-root restriction $\varrho=1$ in
\eqref{eq:wls-criterion} gives the constrained WLS estimator
\begin{equation}
(\widetilde\beta_{n,\ell}^c,\widetilde\mu_{n,\ell}^c)
:=\arg \min_{(\beta,\mu)\in\R^2} \sum_{k=1}^n w_{k,\ell}
\bigl(V_k+\beta V_{k-1}-\mu\bigr)^2,
\label{eq:wls-criterion-constrained}
\end{equation}
which regresses the differenced series on its own lag. The following theorem
shows that imposing the restriction does not
change the joint first-order asymptotic distribution of the estimators of
$(\beta,\mu)$.

\begin{theorem}[First-order limit of constrained WLS]
\label{thm:constrained-wls}
Under the assumptions of Theorem~\ref{thm:main}, the constrained WLS
estimator satisfies
\[
\begin{pmatrix}
\sqrt n\,(\widetilde\beta_{n,\ell}^c-\beta)\\[1mm]
\sqrt{H_{n,\ell}}\,(\widetilde\mu_{n,\ell}^c-\mu)
\end{pmatrix}
\D
\begin{pmatrix}
\Xi_w\\[1mm]
Z
\end{pmatrix}.
\]
\end{theorem}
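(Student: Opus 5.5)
Under $\varrho=1$ the model \eqref{eq:canonical} reads $V_k=-\beta V_{k-1}+\mu+M_k$. The constrained normal equations therefore give
\[
\begin{pmatrix}\widetilde\beta^c_{n,\ell}-\beta\\ \widetilde\mu^c_{n,\ell}-\mu\end{pmatrix}
=(B_{n,\ell})^{-1} e_{n,\ell},
\]
where $B_{n,\ell}$ is the lower-right $2\times2$ block of $A^{(w)}_{n,\ell}$ and $e_{n,\ell}$ consists of the last two coordinates of $d^{(w)}_{n,\ell}$. So the constrained estimator is exactly the sub-regression of the unrestricted problem with the $X_{k-1}$ regressor removed. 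I will use the ingredients already established in the proof of Theorem~\ref{thm:main} for these blocks; I do not re-derive them. Writing $D_n:=\diag(\sqrt n,\sqrt{H_{n,\ell}})$, the plan is to show that $D_n^{-1}B_{n,\ell}D_n^{-1}$ converges to a block-diagonal limit and that $D_n^{-1}e_{n,\ell}$ converges jointly to the appropriate mixed-Gaussian and Gaussian limits.

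\textbf{Denominator.} The relevant entries are $\sum w_{k,\ell}V_{k-1}^2$, $\sum w_{k,\ell}V_{k-1}$ and $H_{n,\ell}$.
\begin{itemize}
\item Since $V_k$ is, conditionally, a stable AR(1) with coefficient $-\beta$ driven by $M_k$, and $\E(M_k^2\mid\mathcal F_{k-1})\approx \alpha\beta(X_{k-1}+X_{k-2})\approx 2\alpha\beta X_{k-1}$, we get $n^{-1}\sum w_{k,\ell}V_{k-1}^2\D c_V I_1$ for a constant $c_V$. This follows from a law-of-large-numbers step plus the functional limit $n^{-1}X_{\lfloor nt\rfloor}\Rightarrow\mathcal X_t$ and continuous mapping for $\int_0^1 t^{-1}\mathcal X_t\,dt$; the behaviour near $t=0$ is controlled by the moment bounds in the remark following Theorem~\ref{thm:main}.
\item The cross term satisfies $\sum w_{k,\ell}V_{k-1}=\op(\sqrt{nH_{n,\ell}})$. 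To see this, sum by parts using $V_{k-1}=X_{k-1}-X_{k-2}$: the result is $\sum (k+\ell)^{-2}X_{k-2}+\mathrm O_{\PP}(1)=\Op(\log n)$.
\end{itemize}
Hence $D_n^{-1}B_{n,\ell}D_n^{-1}\D\diag(c_VI_1,1)$.

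\textbf{Numerator.}
\begin{itemize}
\item The intercept component is $H_{n,\ell}^{-1/2}\sum w_{k,\ell}M_k$, with conditional variance $H_{n,\ell}^{-1}\sum (k+\ell)^{-2}\E(M_k^2\mid\mathcal F_{k-1})\to 2\alpha\beta\,g=\sigma_\mu^2$. The convergence uses $X_{k-1}/k\approx g$ in the log-averaged (Cesàro with $1/k$ weights) sense, which is what yields a deterministic limit.
\item The $\beta$ component is $-n^{-1/2}\sum w_{k,\ell}M_kV_{k-1}$.
\end{itemize}
Both are precisely the coordinates that appear in the proof of Theorem~\ref{thm:main}. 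The joint convergence there, including the independence of $Z$ from $(\mathcal X,W,\widetilde W)$ and the form of the $\Xi_w$ numerator, therefore applies verbatim.

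It remains to check that the limit coincides with $\Xi_w$, the marginal of the unrestricted limit. In the unrestricted problem the $X_{k-1}$ regressor lives at scale $n^{3/2}$ in the denominator. Its cross moment with $V_{k-1}$, namely $n^{-1}\cdot n^{-1/2}\cdot n^{-1/2}\cdot\sum w_{k,\ell}X_{k-1}V_{k-1}$, is $\op(1)$ after normalisation, because $\sum w X_{k-1}V_{k-1}$ is of order $n$ by the same summation-by-parts argument: $X_{k-1}V_{k-1}\approx\tfrac12\Delta X^2_{k-1}$. The same argument handles its cross moment with the constant. Consequently the limiting normalised unrestricted matrix is block-diagonal with the $(\beta,\mu)$ block equal to $\diag(c_VI_1,1)$, and the $(\beta,\mu)$ coordinates of the unrestricted and constrained estimators share the same limit $(\Xi_w,Z)$. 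If the proof of Theorem~\ref{thm:main} already records these block limits, then the theorem follows from Slutsky and the continuous mapping theorem applied to $(D_n^{-1}B_{n,\ell}D_n^{-1})^{-1}D_n^{-1}e_{n,\ell}$.

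\textbf{Main obstacle.} The delicate point is the off-diagonal term $H_{n,\ell}^{-1/2}n^{-1/2}\sum w_{k,\ell}V_{k-1}$, together with the requirement that the slow-rate intercept score not be contaminated. The constrained estimator loses the partialling-out of $X_{k-1}$, so any $\Op(\sqrt n)$ drift in $\sum w V$ would feed into $\widetilde\mu^c$ at the slow rate $\sqrt{H_{n,\ell}}$. I will handle this with the summation-by-parts bound above, using $\E X_k=\mathrm O(k)$ and the eighth-moment assumption to control $\sum(k+\ell)^{-2}X_{k-2}$ in $L^1$. That bound gives order $\log n=o(\sqrt{nH_{n,\ell}})$, which is ample.
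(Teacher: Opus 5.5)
Your proposal is correct, but it takes a different route from the paper. You write the constrained error directly as $B_{n,\ell}^{-1}e_{n,\ell}$. You then note that $D_n^{-1}B_{n,\ell}D_n^{-1}$ and $D_n^{-1}e_{n,\ell}$ are exactly the lower-right $2\times2$ block of $\widetilde A^{(w)}_{n,\ell}$ and the last two coordinates of $\widetilde d^{(w)}_{n,\ell}$. The joint limit in Proposition~\ref{prop:joint-convergence} and the continuous mapping theorem, applied at the almost surely invertible limit $\diag(c_\beta I_1,1)$, then give $\bigl(-\sqrt{\alpha(1+\beta)}\int_0^1t^{-1}\mathcal X_t\,d\widetilde W_t/I_1,\ Z\bigr)$. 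Here $c_V=c_\beta=2\beta/(1+\beta)$ by Lemma~\ref{lem:weighted-V-sums}, and the limit is $(\Xi_w,Z)$ after $\widetilde W\mapsto-\widetilde W$.

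The paper instead subtracts the unrestricted normal equations from the constrained ones to get $(\widetilde\beta^c_{n,\ell}-\widetilde\beta_{n,\ell},\widetilde\mu^c_{n,\ell}-\widetilde\mu_{n,\ell})^{\mathsf T}=C_{n,\ell}^{-1}b_{n,\ell}(\widetilde\varrho_{n,\ell}-1)$. It shows the two coordinates of $C_{n,\ell}^{-1}b_{n,\ell}$ are $\Op(1)$ and $\Op(n/H_{n,\ell})$, and then transfers Theorem~\ref{thm:main} by Slutsky's lemma.

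What each route buys:
\begin{itemize}
\item Your route is more direct. It inverts only the $(\beta,\mu)$ block and never touches the $\varrho$ row or $I_2$.
\item The paper's route additionally delivers the rates \eqref{eq:constrained-unrestricted-rates} and the exact gap \eqref{eq:exact-intercept-gap}. These drive Theorem~\ref{thm:intercept-second-order} (the $I_1\Psi_w$ term) and are reused in Proposition~\ref{prop:joint-ols-wls}.
\item Your representation, even applied jointly with the unrestricted map, only yields $\widetilde\mu^c_{n,\ell}-\widetilde\mu_{n,\ell}=\op(H_{n,\ell}^{-1/2})$.
\end{itemize}

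A few small slips do not affect validity:
\begin{itemize}
\item The $(X_{k-1},V_{k-1})$ design entry is normalized by $n^{-3/2}$, not $n^{-2}$. The $X_{k-1}$ coordinate scales as $n$, not $n^{3/2}$, which is the OLS scaling.
\item Your closing paragraph identifying the limit with the unrestricted marginal is unnecessary, because $\Xi_w$ is given explicitly and the direct computation already matches it.
\item Since the limit is random, the step you need is joint convergence of block and score followed by continuous mapping, not Slutsky's lemma applied to each separately. Proposition~\ref{prop:joint-convergence} supplies that joint convergence.
\end{itemize}
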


\subsection{Comparison of unrestricted and constrained WLS}
\label{sec:constraint-estimation}

The next theorem characterizes the leading difference between the constrained
and unrestricted intercept estimators, providing a stochastic comparison of
first-order equivalent estimators in the sense of \citet{robinson1988stochastic}.

\begin{theorem}[Second-order limit of the intercept-estimator difference]
\label{thm:intercept-second-order}
Under the assumptions of Theorem~\ref{thm:main}, the joint convergence
in~\eqref{eq:main-canonical} extends to
\[
\begin{pmatrix}
n(\widetilde\varrho_{n,\ell}-1)\\[1mm]
\sqrt n\,(\widetilde\beta_{n,\ell}-\beta)\\[1mm]
\sqrt{H_{n,\ell}}\,(\widetilde\mu_{n,\ell}-\mu)\\[1mm]
H_{n,\ell}(\widetilde\mu_{n,\ell}^c-\widetilde\mu_{n,\ell})
\end{pmatrix}
\D
\begin{pmatrix}
\Psi_w\\[1mm]
\Xi_w\\[1mm]
Z\\[1mm]
I_1\Psi_w
\end{pmatrix}.
\]
\end{theorem}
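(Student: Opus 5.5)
The proof rests on an exact algebraic identity linking the constrained and unrestricted intercept estimators, of Frisch--Waugh type. Once the identity is in hand, the fourth coordinate becomes a continuous function of $n(\widetilde\varrho_{n,\ell}-1)$ and of sample moments that already appear in the proof of Theorem~\ref{thm:main}.

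\textbf{Step 1: the exact identity.} Write the unrestricted model in differenced form,
\[
V_k=(\varrho-1)X_{k-1}-\beta V_{k-1}+\mu+M_k.
\]
Let $Z_{k}:=(-V_{k-1},1)^\top$, let $\widetilde\gamma:=(\widetilde\beta_{n,\ell},\widetilde\mu_{n,\ell})^\top$, and let $\widetilde e_k$ be the unrestricted WLS residual. Then
\[
V_k=(\widetilde\varrho_{n,\ell}-1)X_{k-1}+Z_k^\top\widetilde\gamma+\widetilde e_k,
\qquad
\sum_k w_{k,\ell}Z_k\widetilde e_k=0 .
\]
Substitute this decomposition into the constrained normal equations, $\widetilde\gamma^c=B_n^{-1}\sum_k w_{k,\ell}Z_kV_k$ with $B_n:=\sum_k w_{k,\ell}Z_kZ_k^\top$. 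The residual term drops out, giving
\[
\widetilde\gamma^c-\widetilde\gamma=(\widetilde\varrho_{n,\ell}-1)\,B_n^{-1}c_n,
\qquad
c_n:=\sum_{k=1}^n w_{k,\ell}X_{k-1}Z_k .
\]
Writing $B_n=\bigl(\begin{smallmatrix}b_{11}&b_{12}\\ b_{12}&H_{n,\ell}\end{smallmatrix}\bigr)$, the second coordinate reads
\[
H_{n,\ell}(\widetilde\mu^c_{n,\ell}-\widetilde\mu_{n,\ell})
=n(\widetilde\varrho_{n,\ell}-1)\cdot
\frac{H_{n,\ell}\,(b_{11}c_{n,2}-b_{12}c_{n,1})}{n\,(b_{11}H_{n,\ell}-b_{12}^2)} .
\]

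\textbf{Step 2: orders of the ingredients.} I would establish the following, using the moment and scaling facts from the proof of Theorem~\ref{thm:main}.
\begin{itemize}
\item $n^{-1}b_{11}=n^{-1}\sum_k w_{k,\ell}V_{k-1}^2$ converges to a positive limit, or at least is bounded away from $0$ in probability. The reason is that $\E(V_k^2\mid\mathcal F_{k-2})$ is of order $X_{k-2}$, so $b_{11}\approx\sum_k k^{-1}\,2\alpha\beta X_{k-2}$, which is of order $n\int_0^1\mathcal X_t t^{-1}\,dt$.
\item $b_{12}=-\sum_k w_{k,\ell}V_{k-1}=\Op(\log n)$, since $\E V_k\to g$ and the martingale part contributes at most $\Op(\sqrt n)$ by a variance bound.
\item $c_{n,1}=-\sum_k w_{k,\ell}X_{k-1}V_{k-1}=\Op(n)$, since $\E|X_{k-1}V_{k-1}|=\mathrm O(k)$.
\item $n^{-1}c_{n,2}=n^{-1}\sum_k w_{k,\ell}X_{k-1}\to I_1$. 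This should hold jointly with the convergence in~\eqref{eq:main-canonical}, because it is the same Riemann-sum functional of $n^{-1}X_{\lfloor nt\rfloor}$ used for the denominator of $\Xi_w$.
\end{itemize}

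\textbf{Step 3: conclusion.} From Step 2, the determinant satisfies $b_{11}H_{n,\ell}-b_{12}^2=b_{11}H_{n,\ell}(1+\op(1))$. The fraction therefore equals
\[
\frac{c_{n,2}}{n}-\frac{b_{12}}{H_{n,\ell}}\cdot\frac{c_{n,1}}{b_{11}}\cdot\frac{H_{n,\ell}}{n}+\op(1)
=\frac{c_{n,2}}{n}+\Op(1)\cdot\Op(1)\cdot\mathrm O\!\left(\tfrac{\log n}{n}\right)+\op(1)
=I_1+\op(1).
\]
Joint convergence of $\bigl(n(\widetilde\varrho_{n,\ell}-1),\sqrt n(\widetilde\beta_{n,\ell}-\beta),\sqrt{H_{n,\ell}}(\widetilde\mu_{n,\ell}-\mu),n^{-1}c_{n,2}\bigr)$ to $(\Psi_w,\Xi_w,Z,I_1)$ then follows from the proof of Theorem~\ref{thm:main}. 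The continuous mapping theorem and Slutsky's lemma give the fourth coordinate limit $I_1\Psi_w$.

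\textbf{Main obstacle.} I expect the hardest part to be the joint convergence of $n^{-1}\sum_k w_{k,\ell}X_{k-1}$ to $I_1=\int_0^1\mathcal X_t t^{-1}\,dt$, because the weight is singular at the origin. I would first truncate at $t\geq\eta$ and apply the functional limit of \citet{barczy2011asymptotic} on $[\eta,1]$. I would then control the remainder using $\E X_{k}=\mathrm O(k)$, which gives
\[
n^{-1}\E\sum_{k\leq\eta n}k^{-1}X_{k-1}=\mathrm O(\eta),
\]
and the matching bound $\E\int_0^\eta\mathcal X_t t^{-1}\,dt=\mathrm O(\eta)$ from the remark following Theorem~\ref{thm:main}. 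A secondary technical point is the lower bound on $n^{-1}b_{11}$. For this I would compare $V_{k-1}^2$ with its conditional variance through a martingale $L^2$ estimate that uses $\E(\varepsilon_1^8)<\infty$.
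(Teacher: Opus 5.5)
Your argument is the paper's. The paper writes $(\widetilde\beta_{n,\ell}^c-\widetilde\beta_{n,\ell},\widetilde\mu_{n,\ell}^c-\widetilde\mu_{n,\ell})^{\mathsf T}=C_{n,\ell}^{-1}b_{n,\ell}(\widetilde\varrho_{n,\ell}-1)$ with $C_{n,\ell}=B_n$ and $b_{n,\ell}=c_n$, which gives exactly your expression as~\eqref{eq:exact-intercept-gap}. It then discards the cross term using $S_{V,n,\ell}=\Op(H_{n,\ell})$ and $S_{XV,n,\ell}=\Op(n)$, and takes $S_{X,n,\ell}/n\to I_1$ jointly with~\eqref{eq:main-canonical} from the $\zeta$-truncated design limit behind Theorem~\ref{thm:main}. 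The one difference is harmless. The paper identifies the limit $c_\beta I_1$ of $b_{11}/n$ via Lemma~\ref{lem:weighted-V-sums}, whereas you need only a lower bound, because $b_{11}$ cancels from the leading term. Your comparison of $V_{k-1}^2$ with $\E(M_{k-1}^2\mid\mathcal F_{k-2})$ gives such a bound, converging to $2\alpha\beta I_1>0$. So $2\alpha\beta$ is a valid lower-bound constant, although the actual limit carries the extra factor $(1-\beta^2)^{-1}$.

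Two of your Step~2 justifications do not deliver the orders you state.

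First, $\E|X_{k-1}V_{k-1}|$ is of order $k^{3/2}$, not $k$, because $X_{k-1}$ is of order $k$ and $V_{k-1}$ is of order $\sqrt k$. Absolute moments therefore give only $c_{n,1}=\Op(n^{3/2})$. The order $\Op(n)$ comes from cancellation: the identity $2X_{k-1}V_{k-1}=X_{k-1}^2-X_{k-2}^2+V_{k-1}^2$ plus summation by parts turns $\sum_k k^{-1}X_{k-1}V_{k-1}$ into a sum of nonnegative terms with mean $\mathrm O(n)$.

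Second, a martingale part of size $\Op(\sqrt n)$ yields only $b_{12}=\Op(\sqrt n)$. To get $\Op(\log n)$, either use $\sum_k k^{-1}V_{k-1}=X_{n-1}/n+\sum_{k<n}X_{k-1}/\{k(k+1)\}$, which is nonnegative with mean $\mathrm O(\log n)$. Or interchange sums: the coefficient of $M_j$ is then $\mathrm O(1/j)$, so the centered part is $\Op(\sqrt{\log n})$.

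This is not cosmetic. With both bounds as you justified them, the cross term $b_{12}c_{n,1}/(b_{11}n)$ is only $\Op(1)$, whereas either sharp bound makes it $\op(1)$. With these repairs, made at zero start and $\ell=0$ and then carried over by the coupling and offset transfer of Proposition~\ref{prop:joint-convergence}, your proof goes through.
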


The difference between the two intercept estimators is thus second order
relative to their individual estimation errors, which are both
$\Op(H_{n,\ell}^{-1/2})$. Since
$H_{n,\ell}\sim\log n$, the gap closes only logarithmically, so it remains
visible at empirical sample sizes.

\subsection{Weight concentration and fixed-offset sensitivity}
\label{sec:fixed-offsets}

Theorem~\ref{thm:main} permits any fixed initial condition. If the available
historical record is $Y_1,\ldots,Y_N$, inference can therefore be conditional
on its first two observations through the relabeling
\[
X_{-1}:=Y_1,\qquad X_0:=Y_2,\qquad
X_k:=Y_{k+2},\quad k=1,\ldots,n,\qquad n:=N-2.
\]
Thus the raw transition index $j=3,\ldots,N$ corresponds to $k=j-2$. A
raw-time weight $1/(j+\ell_{\mathrm{raw}})$ is therefore
$1/\{k+(\ell_{\mathrm{raw}}+2)\}$, so
$\ell_{\mathrm{reindexed}}=\ell_{\mathrm{raw}}+2$.
In particular, weighting by raw observation time $1/j$ corresponds to
$\ell=2$ after reindexing, whereas resetting the weight clock at the first
fitted transition gives $\ell=0$ and raises the first transition's weight from
$1/3$ to $1$. We use $\ell=0$ as the reference specification.

The offset also controls how much of the total weight falls on the earliest,
smallest counts. Under weights $w_{k,\ell}$, the first $m$ fitted equations
receive the fraction $H_{m,\ell}/H_{n,\ell}$ of the total weight, against $m/n$
under equal weighting. At $n=100$, the first five fitted equations carry $44\%$
of the weight at $\ell=0$ but only $17\%$ at $\ell=10$.
Proposition~\ref{prop:fixed-offset-intercept} compares intercept estimators
computed at different fixed offsets from the same reindexed trajectory.

\needspace{7\baselineskip}
\begin{proposition}[Cross-offset intercept comparison]
\label{prop:fixed-offset-intercept}
Under the conditions of Theorem~\ref{thm:main}, let $\ell_1,\ell_2\geq0$ be
fixed. For estimators computed from the same reindexed trajectory,
\[
\widetilde\mu_{n,\ell_1}-\widetilde\mu_{n,\ell_2}
=\Op((\log n)^{-1}),
\qquad
\widetilde\mu_{n,\ell_1}^c-
\widetilde\mu_{n,\ell_2}^c=\Op((\log n)^{-1}).
\]
\end{proposition}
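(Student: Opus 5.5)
The plan is to isolate, for each estimator, the leading expansion of the intercept error as a weighted average of martingale differences. The key point is that the weights $w_{k,\ell_1}$ and $w_{k,\ell_2}$ differ only at early times, by a summable amount. Precisely,
\[
w_{k,\ell_1}-w_{k,\ell_2}=\frac{\ell_2-\ell_1}{(k+\ell_1)(k+\ell_2)}=\mathrm O(k^{-2}).
\]
As a consequence, any weighted sum $\sum_k (w_{k,\ell_1}-w_{k,\ell_2})Y_k$ with $\E|Y_k|=\mathrm O(k)$ is only $\Op(\log n)$. When $\E|Y_k|=\mathrm O(1)$ it is $\Op(1)$, and sums of the form $\sum_k(w_{k,\ell_1}-w_{k,\ell_2})M_k$ are $\Op(1)$ in $L^2$, since $\Var(M_k)=\mathrm O(k)$ gives $\sum_k k^{-4}\cdot k<\infty$.

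\textbf{Step 1 (representation).} From the proof of Theorem~\ref{thm:main}, I will extract the expansion
\[
\widetilde\mu_{n,\ell}-\mu=H_{n,\ell}^{-1}\Bigl\{\sum_{k=1}^n w_{k,\ell}M_k-\sum_{k=1}^n w_{k,\ell}X_{k-1}(\widetilde\varrho_{n,\ell}-1)+\sum_{k=1}^n w_{k,\ell}V_{k-1}(\widetilde\beta_{n,\ell}-\beta)\Bigr\}.
\]
This is exactly the third normal equation. Writing $S_n^{(\ell)}:=\sum_k w_{k,\ell}X_{k-1}$, which is $\Op(n)$ by Theorem~\ref{thm:main} since $n^{-1}S_n^{(\ell)}\D I_1$, and $\sum_k w_{k,\ell}V_{k-1}=\Op(\log n)$, I then form the difference across the two offsets. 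Three types of terms appear.

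\textbf{Step 2 (bounding the difference).} The contributions are as follows.
\begin{itemize}
\item[(a)] The prefactor difference is $H_{n,\ell_1}^{-1}-H_{n,\ell_2}^{-1}=\mathrm O((\log n)^{-2})$. It multiplies a bracket of order $\Op(\sqrt{\log n})$, namely the martingale term plus $I_1\Psi_w$-type terms of order $\Op(1)$, so this contribution is $\Op((\log n)^{-3/2})$.
\item[(b)] Weight differences inside the sums give $\Op(1)$ for the martingale part. For the $X_{k-1}$ part the bound is $\Op(\log n)\cdot\Op(n^{-1})$, and for the $V_{k-1}$ part it is $\Op(1)\cdot\Op(n^{-1/2})$. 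Divided by $H_{n,\ell}$, all three are $\Op((\log n)^{-1})$.
\item[(c)] Differences of the autoregressive estimators across offsets contribute $S_n^{(\ell_2)}(\widetilde\varrho_{n,\ell_1}-\widetilde\varrho_{n,\ell_2})/H_{n,\ell_2}$. Here Theorem~\ref{thm:main} only gives $\widetilde\varrho_{n,\ell_1}-\widetilde\varrho_{n,\ell_2}=\Op(n^{-1})$, so crude bounds yield only $\Op((\log n)^{-1})$. That is already enough.
\end{itemize}
The $V$ term likewise contributes $\Op(\log n\cdot n^{-1/2})/\log n$, which is negligible. Summing (a)--(c) gives the first claim.

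\textbf{Step 3 (constrained estimator).} For the constrained estimator the same scheme applies to
\[
\widetilde\mu^c_{n,\ell}-\mu=H_{n,\ell}^{-1}\Bigl\{\sum_k w_{k,\ell}M_k+\sum_k w_{k,\ell}V_{k-1}(\widetilde\beta^c_{n,\ell}-\beta)\Bigr\},
\]
using Theorem~\ref{thm:constrained-wls} for $\widetilde\beta^c_{n,\ell}-\beta=\Op(n^{-1/2})$. Again $\sum_k w_{k,\ell}V_{k-1}=\Op(\log n)$, which I will justify from $\E|V_{k-1}|=\mathrm O(1)$ (BIP moment bounds on increments). The resulting bound is in fact better than $(\log n)^{-1}$, namely $\Op((\log n)^{-1})$ from the martingale-weight difference alone. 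Alternatively, one can combine Theorem~\ref{thm:intercept-second-order} at each offset with the first claim.

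\textbf{Main obstacle.} The main obstacle is the uniform moment control $\E|V_k|=\mathrm O(1)$ and $\E X_k=\mathrm O(k)$ needed for the weight-difference sums. Also required is that the $\Op$ rates of Theorem~\ref{thm:main} hold simultaneously for both offsets. The latter is immediate since each offset is treated separately and $\Op$ bounds add. For the moments, I would cite the mean recursion $\E X_k=gk+\mathrm O(1)$ from \eqref{eq:long-run-drift} and BIP's second-moment bounds $\E X_k^2=\mathrm O(k^2)$, together with $\E V_k^2=\mathrm O(k)$ from the thinning variance. Under the weaker bound $\E|V_k|=\mathrm O(\sqrt k)$, the sum $\sum_k k^{-2}\sqrt k$ still converges, so the argument goes through.
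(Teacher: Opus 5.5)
Your argument is correct, but it is organized differently from the paper's proof.

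The paper subtracts the full three-dimensional normal equations, writing $A_{n,\ell}^{(w)}(\widetilde{\boldsymbol\theta}_{n,\ell}-\widetilde{\boldsymbol\theta}_{n,0})$ as $(d_{n,\ell}^{(w)}-d_{n,0}^{(w)})-(A_{n,\ell}^{(w)}-A_{n,0}^{(w)})(\widetilde{\boldsymbol\theta}_{n,0}-\boldsymbol\theta)$. It bounds this by $(\Op(\sqrt{H_n}),\Op(1),\Op(1))^{\mathsf T}$ using every entry of \eqref{eq:offset-perturbation-orders}. It then multiplies by the third row of $(A_{n,\ell}^{(w)})^{-1}$, whose orders come from the normalized design limit.

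You use only the scalar intercept normal equation. You let Theorem~\ref{thm:main} (respectively Theorem~\ref{thm:constrained-wls}) at each offset separately control the slope estimates. What makes this work is your observation in (c): the crude bound $\widetilde\varrho_{n,\ell_1}-\widetilde\varrho_{n,\ell_2}=\Op(n^{-1})$ already gives $(\widetilde\varrho_{n,\ell_1}-\widetilde\varrho_{n,\ell_2})\sum_k w_{k,\ell_2}X_{k-1}=\Op(1)$. So you need nothing about the inverse design or finer cross-offset slope differences. Your route can be run with three scalar weight-difference sums and first-moment bounds on $X_k$, which hold from any fixed start by the mean recursion.

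The paper's identity treats the unrestricted and constrained systems uniformly and reuses bounds already proved for the offset transfer. It would also give cross-offset rates for $\widetilde\varrho$ and $\widetilde\beta$. Your alternative for the constrained claim is also valid: apply Theorem~\ref{thm:intercept-second-order} at each offset, then the first claim and the triangle inequality.

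Two statements need correcting. Neither is fatal.

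First, $\E|V_k|=\mathrm O(1)$ is false under the unit root. Given $\mathcal F_{k-1}$, $V_k$ has variance $\alpha\beta(X_{k-1}+X_{k-2})+\Var(\varepsilon_1)$, so $\E|V_k|$ grows like $\sqrt k$, in line with $\E(V_k^2)=\mathrm O(k)$ in \eqref{eq:moment-bounds}. The bound $\sum_k w_{k,\ell}V_{k-1}=\Op(\log n)$ is still true, by summation by parts as in Lemma~\ref{lem:weighted-V-sums}. The sum equals $w_{n,\ell}X_{n-1}-w_{1,\ell}X_{-1}+\sum_{k=1}^{n-1}(w_{k,\ell}-w_{k+1,\ell})X_{k-1}$ with nonnegative increments $w_{k,\ell}-w_{k+1,\ell}$. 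Hence its absolute value has expectation $\mathrm O(\log n)$. Even the cruder $\Op(\sqrt n)$ implied by $\E|V_k|=\mathrm O(\sqrt k)$ would do, because this sum always multiplies a slope error or slope difference of order $n^{-1/2}$. Your fallback remark currently covers only the weight-difference sums and should say this too.

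Second, drop the claim in Step~3 of a bound ``better than $(\log n)^{-1}$''. With the correct bound above, $H_{n,\ell_2}(\widetilde\mu^c_{n,\ell_1}-\widetilde\mu^c_{n,\ell_2})=\sum_{k=1}^n(w_{k,\ell_1}-w_{k,\ell_2})M_k+\op(1)$. For $\ell_1\neq\ell_2$ this is an $L^2$-bounded martingale with a nondegenerate limit, so $(\log n)^{-1}$ is the exact order of the constrained difference.
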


Changing the fixed offset thus produces a difference that is negligible
relative to the $(\log n)^{-1/2}$ scale of the estimation errors themselves.
Negligibility arrives slowly, however: $(\log n)^{-1}$ is still $0.22$ at
$n=100$ and $0.14$ at $n=1000$.

\subsection{Properties under stationarity}
\label{sec:fixed-stationary}

The next proposition gives the probability limits and convergence rates of
the WLS estimators under a fixed stationary alternative.
\begin{proposition}[WLS under a fixed stationary alternative]
\label{prop:fixed-stationary}
Suppose the assumptions of Theorem~\ref{thm:main} hold, except that
$\varrho<1$. Then
\begin{equation}
 \begin{pmatrix}
  \widetilde\varrho_{n,\ell}\\[-1mm]
  \widetilde\beta_{n,\ell}\\[-1mm]
  \widetilde\mu_{n,\ell}
 \end{pmatrix}
 \xrightarrow{\mathrm{a.s.}}
 \begin{pmatrix}\varrho\\[-1mm]\beta\\[-1mm]\mu\end{pmatrix},
 \qquad
 \begin{pmatrix}
  \widetilde\beta_{n,\ell}^c\\[-1mm]
  \widetilde\mu_{n,\ell}^c
 \end{pmatrix}
 \xrightarrow{\mathrm{a.s.}}
 \begin{pmatrix}
  \displaystyle\beta+\frac{1-\varrho}{2}\\[-1mm]
  0
 \end{pmatrix}.
 \label{eq:fixed-stationary-limits}
\end{equation}
Moreover, for $x:=(X_{-1},X_0)$, the unrestricted error vector multiplied by
$H_{n,\ell}$ converges almost surely to a random vector $\Lambda_{\ell,x}$
with positive-definite covariance. The constrained intercept satisfies
$\widetilde\mu_{n,\ell}^c=\Op(H_{n,\ell}^{-1})$.
\end{proposition}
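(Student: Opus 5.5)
The plan is to treat the stationary case through the ergodic theory of the bivariate Markov chain $Y_k:=(X_k,X_{k-1})$, combined with martingale convergence for weighted sums with summable squared weights.

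\textbf{Ergodicity and normalized Gram matrix.} For $\varrho<1$, the chain $(Y_k)$ is a positive Harris recurrent, aperiodic Markov chain on its reachable set in $\mathbb Z_+^2$. Geometric drift toward small states is supplied by the Lyapunov function $V(x,y)=x+y$ (or a power of it, using $\E(\varepsilon_1^8)<\infty$). Its stationary law $\pi$ has finite moments up to order $8$. The ergodic theorem for Harris chains then gives $n^{-1}\sum_{k\le n} f(Y_{k-1})\to\E_\pi f$ almost surely for every initial state $x$ and every $\pi$-integrable $f$. Abel summation transfers Cesàro convergence to the inverse-time weighted averages: $H_{n,\ell}^{-1}\sum_k w_{k,\ell}f(Y_{k-1})\to\E_\pi f$ almost surely, because the weights are positive, decreasing and have divergent sum. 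Applying this entrywise yields
\[
H_{n,\ell}^{-1}A_{n,\ell}^{(w)}\to A_\pi,
\]
the stationary second-moment matrix of $(X_{k-1},-V_{k-1},1)$. This matrix is positive definite because $X_{k-1}$ and $V_{k-1}$ are not almost surely affinely related under $\pi$, since the innovations and thinnings are nondegenerate. In particular $A_{n,\ell}^{(w)}$ is eventually invertible almost surely.

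\textbf{Score martingale and the random limit $\Lambda_{\ell,x}$.} The score $d_{n,\ell}^{(w)}=\sum_k(k+\ell)^{-1}M_k z_{k-1}$, with $z_{k-1}=(X_{k-1},-V_{k-1},1)^\top$, is a vector martingale. Its increments satisfy
\[
\sum_k (k+\ell)^{-2}\,\E\bigl(M_k^2\|z_{k-1}\|^2\bigr)<\infty,
\]
because $\sup_k\E(M_k^2\|z_{k-1}\|^2)<\infty$ follows from the uniform-in-$k$ fourth moment bounds implied by the drift condition. Hence $d_{n,\ell}^{(w)}\to D_{\ell,x}$ almost surely and in $L^2$, with $D_{\ell,x}$ finite. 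Combining this with \eqref{eq:normal-canonical} gives two conclusions. First, $H_{n,\ell}\times(\text{error})=(H^{-1}A)^{-1}d\to A_\pi^{-1}D_{\ell,x}=:\Lambda_{\ell,x}$ almost surely. Second, since $H_{n,\ell}\to\infty$, the errors tend to $0$ almost surely, which is the unrestricted part of \eqref{eq:fixed-stationary-limits}.

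The covariance of $D_{\ell,x}$ is $\sum_k(k+\ell)^{-2}\E(M_k^2z_{k-1}z_{k-1}^\top)$. This is positive definite as soon as one summand is, and that follows from $\E(M_k^2\mid\mathcal F_{k-1})\ge\Var(\varepsilon_1)>0$ together with the nondegeneracy of $z_{k-1}$ for large $k$. Consequently $\Lambda_{\ell,x}$ has positive-definite covariance.

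\textbf{Constrained estimator.} The same weighted ergodic averages show that the constrained estimator converges almost surely to the population least-squares projection of $V_k$ on $(-V_{k-1},1)$ under $\pi$. Since $\E_\pi V=0$, the pseudo-true intercept is $0$, and the slope limit is
\[
-\frac{\Cov_\pi(V_k,V_{k-1})}{\Var_\pi(V)}=\frac{1+r_2-2r_1}{2(1-r_1)},
\]
where $r_1=\alpha/(1-\beta)$ and $r_2=\alpha r_1+\beta$ are the stationary autocorrelations of $X$ from the Yule--Walker equations. Routine algebra should reduce this ratio to $\beta+(1-\varrho)/2$.

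For the rate, write the intercept estimate as $\bar V^{(w)}+\widetilde\beta^c_{n,\ell}\bar V_{-1}^{(w)}$, with weighted means $\bar V^{(w)}=H^{-1}\sum w_kV_k$ and $\bar V_{-1}^{(w)}=H^{-1}\sum w_kV_{k-1}$. Summation by parts gives
\[
\sum_k\frac{V_k}{k+\ell}=\frac{X_n}{n+\ell}-\frac{X_0}{1+\ell}+\sum_{k<n}X_k\Bigl(\frac1{k+\ell}-\frac1{k+1+\ell}\Bigr),
\]
which is $\Op(1)$ because $\sup_k\E X_k<\infty$ and $\sum_k k^{-2}<\infty$. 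The same argument applies to the lagged sum. Since $\widetilde\beta^c_{n,\ell}$ is bounded in probability, it follows that $\widetilde\mu^c_{n,\ell}=\Op(H_{n,\ell}^{-1})$.

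\textbf{Main obstacle.} I expect the main obstacle to be the first step: a rigorous almost-sure weighted ergodic theorem from an arbitrary fixed initial state, together with the uniform moment bounds needed in the other two steps. I would first establish the geometric drift condition for $(X_k,X_{k-1})$ with a polynomial Lyapunov function of degree $4$, which delivers both Harris ergodicity and $\sup_k\E X_k^4<\infty$. I would then use Abel summation to pass from Cesàro to inverse-time weighted averages.
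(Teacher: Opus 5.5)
Your plan follows the paper's proof closely. Both arguments use ergodicity of the lag-pair chain and Abel summation with Toeplitz to pass from Ces\`aro to inverse-time averages. Both then use an $L^2$-bounded weighted score martingale giving $\Lambda_{\ell,x}=A_\pi^{-1}D_{\ell,x}$, the population projection for the constrained fit, and summation by parts for $\widetilde\mu_{n,\ell}^c=\Op(H_{n,\ell}^{-1})$. There are two differences. First, the paper cites Sz\H{u}cs (2024), on two-type Galton--Watson processes with immigration, for the invariant law, positive Harris recurrence and $\sup_k\E(X_k^8)<\infty$, where you build a drift condition yourself. Second, the paper obtains $b^\dagger$ from $\E_\pi(X_{k-1}V_{k-1})=\sigma_V^2/2$ and the differenced canonical equation rather than from Yule--Walker. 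Your Yule--Walker route does close: $1+r_2-2r_1=(1-\varrho)(1-\alpha+\beta)/(1-\beta)$ and $1-r_1=(1-\varrho)/(1-\beta)$, so the ratio is $(1-\alpha+\beta)/2=\beta+(1-\varrho)/2$.

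Three details need repair, though none changes the architecture.

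\textbf{The drift function.} $V(x,y)=x+y$ is not a drift function, since $\E\{X_k+X_{k-1}\mid X_{k-1}=x,X_{k-2}=y\}=(1+\alpha)x+\beta y+\mu$ exceeds $x+y$ for large $x$. Taking powers does not help, by Jensen. Use a weighted function $V_c(x,y)=x+cy$ instead. For example, $c=(1-\alpha+\beta)/2$ gives $\E\{V_c(Y_k)\mid Y_{k-1}\}\le\lambda V_c(Y_{k-1})+\mu$ with $\lambda=\max\{(1+\alpha+\beta)/2,\,2\beta/(1-\alpha+\beta)\}<1$. Then $V_c^4$ satisfies a geometric drift inequality, because the conditionally centered thinning and innovation terms contribute only lower-order powers.

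\textbf{Degenerate innovations.} The paper does not assume $\Var(\varepsilon_1)>0$; innovations degenerate at a positive integer $\mu$ are allowed. In that case your lower bound $\E(M_k^2\mid\mathcal F_{k-1})\ge\Var(\varepsilon_1)$ gives nothing. The fix, which the paper makes explicitly, is $\E(M_k^2\mid\mathcal F_{k-1})\ge\alpha(1-\alpha)X_{k-1}\ge\alpha(1-\alpha)\mu>0$ for $k\ge2$.

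\textbf{The constrained response moment.} The sum $\sum_k w_{k,\ell}V_kV_{k-1}$ involves three consecutive counts, so it is not of the form $f(Y_{k-1})$ covered by your weighted ergodic law. You can either invoke the law of large numbers for the pair chain $(Y_{k-1},Y_k)$, or do as the paper does. The paper substitutes $V_k=(\varrho-1)X_{k-1}-\beta V_{k-1}+\mu+M_k$ and treats the predictable part with the ergodic law. The martingale part is then the last two coordinates of $d_{n,\ell}^{(w)}$, which you already showed converge almost surely, so it vanishes after division by $H_{n,\ell}$.
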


Thus, under stationarity, unrestricted inverse-time WLS is consistent and
converges at the logarithmic rate $H_{n,\ell}\sim\log n$, slower than the
$\sqrt n$ rate of unweighted conditional least squares \citep{latour1998}.
The constrained WLS estimator is inconsistent.

\section{Unit-root testing}
\label{sec:unit-root-testing}

The unit-root results in Section~\ref{sec:wls-estimation} and BIP take
$\varrho=1$ as a model assumption. We now develop a test of this restriction
against stationary alternatives.

\subsection{Choice of test statistic}
\label{sec:choice-test-statistic}

We have two candidate statistics for testing $H_0:\varrho=1$ against
$H_1:\varrho<1$:
\[
S_n^{\mathrm{OLS}}
:=n(\widehat\varrho_n^{\mathrm{OLS}}-1),
\qquad
T_{n,\ell}^{\mathrm{WLS}}
:=n(\widetilde\varrho_{n,\ell}-1).
\]
Here $\widehat\varrho_n^{\mathrm{OLS}}=\widehat\varrho_n$ denotes the OLS
estimator obtained with equal weights. For
$X_{-1}=X_0=0$, Theorem~2.1 of BIP gives
\begin{equation}
S_n^{\mathrm{OLS}}\D\Psi_\varrho
:=
\frac{\int_0^1\mathcal X_t\,d\mathcal M_t-J_1\mathcal M_1}
{J_2-J_1^2},
\qquad
J_1:=\int_0^1\mathcal X_t\,dt,
\quad
J_2:=\int_0^1\mathcal X_t^2\,dt,
\label{eq:ols-unit-root-limit}
\end{equation}
where
\[
\mathcal M_t:=(1+\beta)\mathcal X_t-\mu t
=\sqrt{2\alpha\beta}\int_0^t\mathcal X_s^{1/2}\,dW_s.
\]
By Theorem~\ref{thm:main}, the WLS statistic likewise satisfies
$T_{n,\ell}^{\mathrm{WLS}}\D\Psi_w$.

To compare the two limits, let $w_k=1$ for OLS and
$w_k=(k+\ell)^{-1}$ for WLS, and denote the corresponding persistence
estimator by $\widehat\varrho_n^{(w)}$. Define
\[
\begin{aligned}
H&:=\sum_{k=1}^n w_k,
& A_n&:=\frac1{n^2}\sum_{k=1}^n w_kX_{k-1}^2,
& B_n&:=\frac1n\sum_{k=1}^n w_kX_{k-1},\\
Q_n&:=\frac1n\sum_{k=1}^n w_kX_{k-1}M_k,
& Z_n&:=\frac1{\sqrt H}\sum_{k=1}^n w_kM_k.
\end{aligned}
\]
Under the unit root, it can be shown from the normal equations that, for both
estimators,
\begin{equation}
n(\widehat\varrho_n^{(w)}-1)
=\frac{Q_n-B_nZ_n/\sqrt H}{A_n-B_n^2/H}
+\Op(n^{-1/2}).
\label{eq:common-persistence-expansion}
\end{equation}

For OLS, $H=n$, and both denominator terms are of order $n$. After division
by $n$, they converge jointly to $J_2$ and $J_1^2$, respectively, so both
contribute to the limiting denominator $J_2-J_1^2$. For WLS, $A_n$ is of
order one, whereas $B_n^2/H$ is of order $1/\log n$. The second term
disappears asymptotically, leaving only $I_2$ in the limiting denominator.

However, $\log n$ grows slowly, so the term omitted from the WLS limiting
denominator can remain appreciable at practical sample sizes. The same issue
arises in the numerator: its intercept adjustment vanishes only at rate
$1/\sqrt{\log n}$, whereas the corresponding OLS adjustment contributes at
first order and is retained in the limit. Thus, although inverse-time
weighting restores intercept consistency, its first-order persistence limit
omits intercept adjustments that disappear only logarithmically.
Inverse-time weighting also gives greater relative influence to early,
small-count observations, where the diffusion approximation may be less
accurate.

\Needspace{7\baselineskip}
We assess the finite-sample consequences by comparing the null distributions
of the OLS and WLS statistics at $n=1000$ with their respective continuous
limits, using the three parameter
designs in Section~\ref{sec:finite-sample} and the reference offset $\ell=0$
for WLS. The designs cover the transient, boundary, and recurrent regimes.
Oracle critical values use the true
nuisance parameters, while all regression coefficients remain estimated.

\begin{figure}[!htbp]
\centering
\includegraphics[width=\textwidth]
{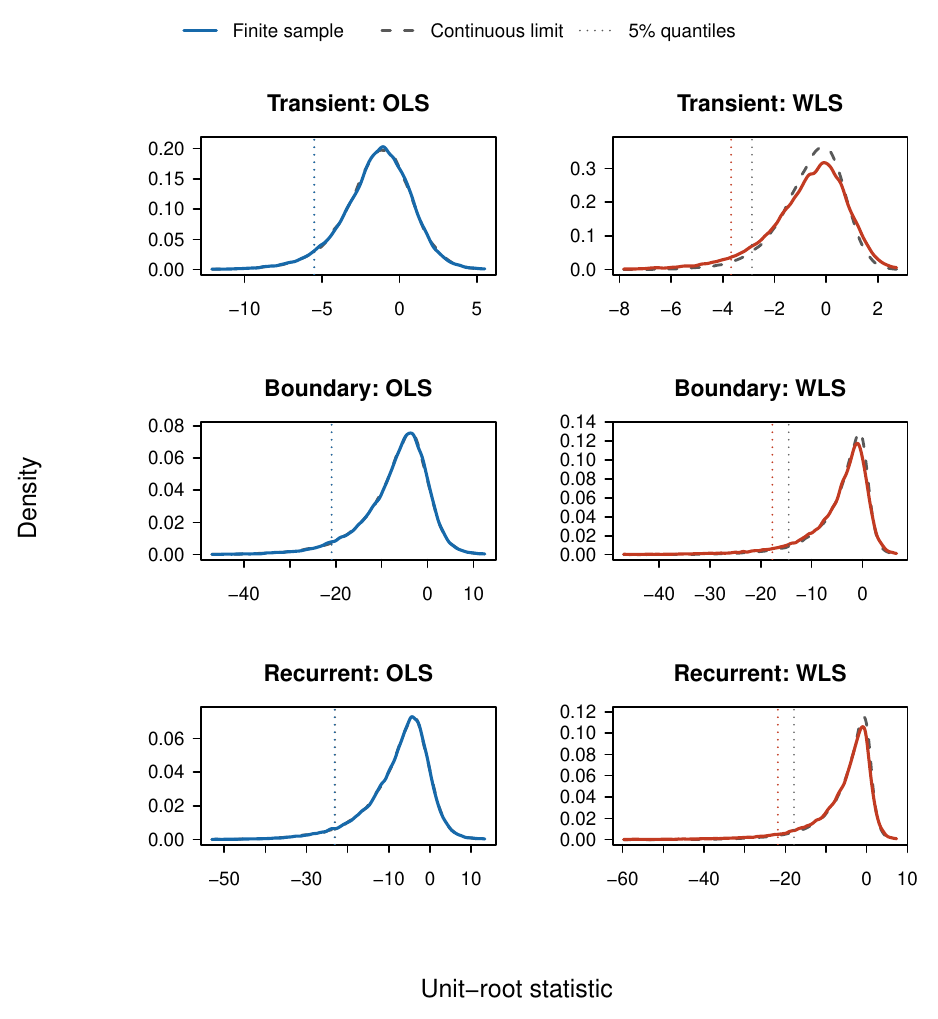}
\caption{Oracle null-law comparison at $n=1000$. Solid colored curves are
kernel density estimates of the finite-sample statistics and dashed gray
curves are those of their continuous limits; dotted lines mark the respective
$5\%$ quantiles. The rows correspond to the transient, boundary, and recurrent
designs.}
\label{fig:oracle-null-density}

\begin{minipage}{0.96\linewidth}
\footnotesize
\emph{Notes:} Finite-sample distributions use $50{,}000$ independent
Poisson-INAR(2) trajectories per design, with OLS and WLS computed on the
same trajectories. Continuous limits
use $100{,}000$ squared-Bessel paths generated by exact transitions on a
$3{,}000$-step grid.
\end{minipage}
\end{figure}

Figure~\ref{fig:oracle-null-density} shows close agreement between the OLS
finite-sample and limiting densities, including their lower $5\%$ quantiles,
across all three designs. The WLS lower quantiles remain to the left of their
limiting counterparts, consistent with the slowly vanishing intercept
adjustments discussed above. These results favor $S_n^{\mathrm{OLS}}$ for
testing, calibrated with WLS nuisance estimates.

\subsection{Nuisance calibration for the OLS test}
\label{sec:ols-unit-root-calibration}
\label{sec:unit-root-calibration}

Constrained WLS is consistent under the null, so a first approach is to
calibrate the OLS statistic by plugging its estimates into the null quantile:
reject the unit-root null when
\[
S_n^{\mathrm{OLS}}
<q_\tau^{\mathrm{OLS}}(\widetilde\beta_{n,\ell}^c,
                      \widetilde\mu_{n,\ell}^c),
\]
where, for $\tau\in(0,1)$, $q_\tau^{\mathrm{OLS}}(\beta,\mu)$ denotes
the lower $\tau$-quantile of $\Psi_\varrho$ under
$(\alpha,\beta)=(1-\beta,\beta)$.

This rule, however, need not be well defined in finite samples: the
constrained-WLS fit may be nonunique, or its estimated nuisance parameters
may lie outside the domain of the null critical-value map. Moreover, under
fixed stationary alternatives, the constrained intercept converges to zero,
so constrained calibration approaches the boundary of that domain.

Let $\Theta_0:=(0,1)\times(0,\infty)$ denote the theoretical nuisance space.
A constrained- or unrestricted-WLS fit is admissible when it is unique and
its fitted nuisance pair belongs to $\Theta_0$. The limiting distribution of
the OLS statistic can be parameterized by $\beta$ and the squared-Bessel
dimension $\delta$, which for a nuisance pair $(b,m)\in\Theta_0$ is
\[
\delta(b,m):=\frac{2m(1+b)}{(1-b)b}.
\]
For an admissible constrained-WLS fit, define the estimated dimension by
\[
\widetilde\delta_{n,\ell}^C
:=\delta(\widetilde\beta_{n,\ell}^c,
         \widetilde\mu_{n,\ell}^c),
\]
and set $\eta_{n,\ell}:=H_{n,\ell}^{-1/2}$ as the selection threshold.

On their respective admissibility events, the two fits yield the raw
plug-in critical values
\[
q_{\tau,n,\ell}^{\mathrm{OLS},C}
:=q_\tau^{\mathrm{OLS}}(\widetilde\beta_{n,\ell}^c,
\widetilde\mu_{n,\ell}^c),
\qquad
q_{\tau,n,\ell}^{\mathrm{OLS},W}
:=q_\tau^{\mathrm{OLS}}(\widetilde\beta_{n,\ell},
\widetilde\mu_{n,\ell}).
\]
The superscripts $C$ and $W$ identify constrained- and unrestricted-WLS
nuisance calibration, respectively. We use constrained calibration when its
estimated dimension exceeds the threshold and unrestricted calibration
otherwise, provided the corresponding fit is admissible. Formally, the
feasible OLS critical value is
\begin{equation}
q_{\tau,n,\ell}^{\mathrm{OLS}}
:=
\begin{cases}
q_{\tau,n,\ell}^{\mathrm{OLS},C},
  &\text{the constrained fit is admissible and }
   \widetilde\delta_{n,\ell}^C>\eta_{n,\ell},\\[1mm]
q_{\tau,n,\ell}^{\mathrm{OLS},W},
  &\text{otherwise, if the unrestricted fit is admissible},\\[1mm]
-\infty,&\text{if neither applies}.
\end{cases}
\label{eq:ols-critical-value}
\end{equation}
The OLS test rejects $H_0:\varrho=1$ against $H_1:\varrho<1$ when
$S_n^{\mathrm{OLS}}<q_{\tau,n,\ell}^{\mathrm{OLS}}$, and does not reject
otherwise. The conventions $q_{\tau,n,\ell}^{\mathrm{OLS}}=-\infty$ when
neither calibration is selected and $S_n^{\mathrm{OLS}}:=+\infty$ when the
OLS design is singular make both unavailable cases nonrejections.

The selection condition reflects the different behavior of constrained WLS
under the unit-root null and fixed stationary alternatives. Under $H_0$,
constrained WLS is consistent, so its fit is admissible with probability
tending to one and $\widetilde\delta_{n,\ell}^C\Pto\delta(\beta,\mu)>0$ on
that event; since $\eta_{n,\ell}\to0$, constrained calibration is selected
with probability tending to one. Under a fixed stationary alternative,
Proposition~\ref{prop:fixed-stationary} gives
\[
\widetilde\delta_{n,\ell}^C
=\Op(H_{n,\ell}^{-1})
=\op(\eta_{n,\ell}),
\]
on the constrained-admissibility event, so the selector switches to its
unrestricted calibration. We use unrestricted WLS because its nuisance pair is
consistent under both regimes.

The next theorem shows that the OLS test has asymptotically correct size
$\tau$ and is consistent against fixed stationary alternatives.

\begin{theorem}[Validity and consistency of the OLS test]
\label{thm:ols-test}
Under $H_0:\varrho=1$, assume that $(\beta,\mu)\in\Theta_0$, that
$(\beta',\mu')\mapsto
q_\tau^{\mathrm{OLS}}(\beta',\mu')$ is continuous at $(\beta,\mu)$, and that
\[
\PP\{\Psi_\varrho=q_\tau^{\mathrm{OLS}}(\beta,\mu)\}=0.
\]
Then
\[
q_{\tau,n,\ell}^{\mathrm{OLS}}
\Pto q_\tau^{\mathrm{OLS}}(\beta,\mu),
\qquad
\PP\{S_n^{\mathrm{OLS}}
<q_{\tau,n,\ell}^{\mathrm{OLS}}\}\longrightarrow\tau.
\]
Under every fixed stationary alternative $\varrho<1$ covered by
Proposition~\ref{prop:fixed-stationary} at which the OLS quantile map is
locally bounded,
\[
\PP\{S_n^{\mathrm{OLS}}
<q_{\tau,n,\ell}^{\mathrm{OLS}}\}\longrightarrow1.
\]
\end{theorem}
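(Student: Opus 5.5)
The proof splits into the null part and the alternative part. In both, the plan is to combine convergence of the selected critical value with the known behaviour of $S_n^{\mathrm{OLS}}$.

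\emph{Null hypothesis.} Under $H_0$, Theorem~\ref{thm:constrained-wls} gives $(\widetilde\beta_{n,\ell}^c,\widetilde\mu_{n,\ell}^c)\Pto(\beta,\mu)$. The first step is to show that the constrained fit is admissible with probability tending to one. Uniqueness follows because the $2\times2$ weighted design matrix of $(V_{k-1},1)$, suitably normalized, has a positive definite limit. Indeed, $\sum_k w_{k,\ell}V_{k-1}^2/n$ and $H_{n,\ell}$ dominate, and the off-diagonal terms are of smaller order. This is the same step used in the proof of Theorem~\ref{thm:constrained-wls}, and I would reuse it. Membership in $\Theta_0$ follows because $(\beta,\mu)$ is an interior point of the open set $\Theta_0$.

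Since $\delta(\cdot,\cdot)$ is continuous on $\Theta_0$, we get $\widetilde\delta^C_{n,\ell}\Pto\delta(\beta,\mu)>0$. Because $\eta_{n,\ell}\to0$, the event on which the first branch of \eqref{eq:ols-critical-value} is selected has probability tending to one. On that event $q^{\mathrm{OLS}}_{\tau,n,\ell}=q_\tau^{\mathrm{OLS}}(\widetilde\beta^c_{n,\ell},\widetilde\mu^c_{n,\ell})$. The continuous mapping theorem, applied with continuity of $q_\tau^{\mathrm{OLS}}$ at $(\beta,\mu)$, then yields $q_{\tau,n,\ell}^{\mathrm{OLS}}\Pto q_\tau^{\mathrm{OLS}}(\beta,\mu)$.

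Next I would obtain $S_n^{\mathrm{OLS}}\D\Psi_\varrho$. BIP state this for zero initial values. For a general fixed $(X_{-1},X_0)$, I would note that the functional limit of $n^{-1}X_{\lfloor nt\rfloor}$ from \citet{barczy2011asymptotic} is unaffected by a fixed initial state. The continuous-mapping argument of BIP therefore carries over, with initial terms entering only at order $n^{-1}$.

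Slutsky's lemma then gives $S_n^{\mathrm{OLS}}-q^{\mathrm{OLS}}_{\tau,n,\ell}\D\Psi_\varrho-q_\tau^{\mathrm{OLS}}(\beta,\mu)$. By the no-atom assumption,
\[
\PP\{S_n^{\mathrm{OLS}}<q^{\mathrm{OLS}}_{\tau,n,\ell}\}\to\PP\{\Psi_\varrho<q_\tau^{\mathrm{OLS}}(\beta,\mu)\}=\tau,
\]
where the last equality uses the definition of the quantile together with the no-atom condition. The singular-design convention $S_n^{\mathrm{OLS}}=+\infty$ affects only an event of vanishing probability.

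\emph{Fixed stationary alternative.} By Proposition~\ref{prop:fixed-stationary}, on the constrained-admissibility event we have $\widetilde\delta^C_{n,\ell}=\Op(H_{n,\ell}^{-1})=\op(\eta_{n,\ell})$. Hence the constrained branch is selected with probability tending to zero. The unrestricted estimates converge almost surely to $(\beta,\mu)\in\Theta_0$. The unrestricted design matrix normalized by $H_{n,\ell}$ converges almost surely to a positive definite matrix built from stationary moments; I would get this from a weighted ergodic theorem (Kronecker's lemma) as in the proof of that proposition. Therefore the unrestricted fit is admissible with probability tending to one.

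Local boundedness of the quantile map then gives $q^{\mathrm{OLS}}_{\tau,n,\ell}=\Op(1)$. It remains to show $S_n^{\mathrm{OLS}}\to-\infty$ in probability. Stationary conditional least squares is consistent \citep{latour1998}, so $\widehat\varrho_n\to\varrho<1$ and hence $n(\widehat\varrho_n-1)\approx n(\varrho-1)\to-\infty$. Combining the two facts, $\PP\{S_n^{\mathrm{OLS}}<q^{\mathrm{OLS}}_{\tau,n,\ell}\}\to1$.

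\emph{Main obstacle.} I expect the hardest step to be the admissibility bookkeeping: proving that the relevant WLS fits are unique with probability tending to one in each regime. This is where the mixed normalization of the design matrix, by $n$, $\sqrt n$ and $H_{n,\ell}$, must be handled. I would try a diagonal rescaling $D_n^{-1}A^{(w)}_{n,\ell}D_n^{-1}$ and show that its limit is almost surely positive definite, as already used for Theorem~\ref{thm:main}.

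A secondary point is extending the BIP limit to nonzero initial values. This needs only a short perturbation argument, since the initial state contributes $\Op(1)$ to each $X_k$ and therefore vanishes after scaling by $n$.
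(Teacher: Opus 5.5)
Your proposal is correct and follows the paper's proof essentially step for step. Under $H_0$, consistency of the WLS nuisance estimates gives admissibility and convergence of the selected critical value, and Slutsky's lemma with the no-atom condition gives size $\tau$. Under $\varrho<1$, the bound $\widetilde\delta^C_{n,\ell}=\Op(H_{n,\ell}^{-1})=\op(\eta_{n,\ell})$ forces the unrestricted branch, whose critical value is $\Op(1)$, while $S_n^{\mathrm{OLS}}/n\Pto\varrho-1<0$.

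There are only minor differences. The paper shows that both branch critical values converge, so it need not argue which branch is selected under $H_0$. It also obtains $S_n^{\mathrm{OLS}}\D\Psi_\varrho$ from arbitrary fixed initial pairs via the finite-time-extinction coupling in Proposition~\ref{prop:joint-ols-wls}, rather than through your perturbation sketch.
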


For computation, the squared-Bessel representation yields the scaling
identity
\[
q_\tau^{\mathrm{OLS}}(b,m)
=2(1+b)q_\tau^{\mathrm{base}}\{\delta(b,m)\},
\]
where $q_\tau^{\mathrm{base}}(d)$ denotes the lower $\tau$-quantile of
\[
\frac{
R_1^2-2(R_1+2)\displaystyle\int_0^1R_t\,dt
}{
4\left[
\displaystyle\int_0^1R_t^2\,dt
-\left(\displaystyle\int_0^1R_t\,dt\right)^2
\right]
},
\]
with $R$ defined by~\eqref{eq:squared-bessel-process} at dimension $\delta=d$.
This identity reduces critical-value
computation to a one-dimensional quantile map indexed by $\delta$.
We approximate this map by simulation and interpolation, as described in
Appendix~\ref{app:ols-quantile-calibration}; Theorem~\ref{thm:ols-test} is
stated for the exact limiting quantiles.

\section{Inference under the maintained unit-root model}
\label{sec:inference}

Throughout this section, we work under the unit-root assumption $\varrho=1$.
Section~\ref{sec:pretest-effect} also considers inference conditional on
nonrejection by the preceding test. The mixed convergence rates simplify
inference for smooth functions of
$\beta$ and $\mu$. When the derivative with respect to $\mu$ is nonzero
at the true parameter, uncertainty is dominated by estimation of $\mu$,
while estimation of $\beta$ has no first-order effect. Such functions inherit
the slow $\sqrt{\log n}$ convergence rate of the intercept estimator.

\subsection{Gaussian inference using plug-in variances}
\label{sec:gaussian-inference}

Let $h$ be a real-valued function that is continuously differentiable in a
neighborhood of $(\beta,\mu)$, with
$h_\mu(\beta,\mu)\ne0$. Because
$\sqrt n(\widetilde\beta_{n,\ell}^c-\beta)=\Op(1)$ and
$H_{n,\ell}/n\to0$, the contribution from estimating $\beta$ is
asymptotically negligible. The mixed-rate delta method gives
\[
\sqrt{H_{n,\ell}}
\{h(\widetilde\beta_{n,\ell}^c,\widetilde\mu_{n,\ell}^c)-h(\beta,\mu)\}
=h_\mu(\beta,\mu)\sqrt{H_{n,\ell}}
(\widetilde\mu_{n,\ell}^c-\mu)+\op(1)
\D h_\mu(\beta,\mu)Z.
\]
Functionals with $h_\mu(\beta,\mu)=0$ require a separate expansion.

For instance, for the long-run drift $g$ defined
in~\eqref{eq:long-run-drift}, the constrained WLS estimator is
$\widetilde g_{n,\ell}^{c}:=\widetilde\mu_{n,\ell}^{c}/
(1+\widetilde\beta_{n,\ell}^{c})$.
Taking $h(\beta,\mu)=\mu/(1+\beta)$ above gives
\[
\sqrt{H_{n,\ell}}(\widetilde g_{n,\ell}^{c}-g)
\D\mathcal N(0,\sigma_g^2),
\qquad
\sigma_g^2:=\frac{\sigma_\mu^2}{(1+\beta)^2}
=\frac{2(1-\beta)\beta\mu}{(1+\beta)^3}.
\]

The closed-form limiting variances yield plug-in estimators by substituting
$(\widetilde\beta_{n,\ell}^c,\widetilde\mu_{n,\ell}^c)$ for
$(\beta,\mu)$:
\[
\widehat\sigma_{\mu,n,\ell}^{2,\mathrm{PI}}
:=\frac{2(1-\widetilde\beta_{n,\ell}^c)
\widetilde\beta_{n,\ell}^c\widetilde\mu_{n,\ell}^c}
{1+\widetilde\beta_{n,\ell}^c},
\qquad
\widehat\sigma_{g,n,\ell}^{2,\mathrm{PI}}
:=\frac{\widehat\sigma_{\mu,n,\ell}^{2,\mathrm{PI}}}
{(1+\widetilde\beta_{n,\ell}^c)^2}.
\]
When the constrained fit is unique and its nuisance pair belongs to
$\Theta_0$, let $\widehat\sigma_{\mu,n,\ell}^{\mathrm{PI}}$ and
$\widehat\sigma_{g,n,\ell}^{\mathrm{PI}}$ denote the nonnegative square
roots of these estimators. For $\gamma\in(0,1)$, with $z_a$ the lower
$a$-quantile of the standard normal distribution, the plug-in intervals are
\begin{align*}
C_{\mu,n,\ell}^{\mathrm{PI}}(1-\gamma)
&:=\left[\widetilde\mu_{n,\ell}^c\mathbin{\pm}z_{1-\gamma/2}
\frac{\widehat\sigma_{\mu,n,\ell}^{\mathrm{PI}}}{\sqrt{H_{n,\ell}}}
\right]\cap[0,\infty),\\
C_{g,n,\ell}^{\mathrm{PI}}(1-\gamma)
&:=\left[\widetilde g_{n,\ell}^c\mathbin{\pm}z_{1-\gamma/2}
\frac{\widehat\sigma_{g,n,\ell}^{\mathrm{PI}}}{\sqrt{H_{n,\ell}}}
\right]\cap[0,\infty).
\end{align*}
Here $[a\mathbin{\pm}b]$ denotes $[a-b,a+b]$. If the constrained fit is
not admissible, these diagnostic intervals are recorded as unavailable.
Consistency of the variance estimators and Slutsky's lemma give asymptotic
coverage $1-\gamma$ under the maintained unit-root model.

Two features can make the plug-in variance unreliable at empirical sample sizes.
First, the predictable variance of the normalized intercept score depends on
the realized count path and includes an innovation-variance contribution,
whereas the plug-in formula replaces the path component by its limit and drops
the innovation term. These approximations are asymptotically valid, but the
logarithmic normalization allows appreciable finite-sample discrepancies.
Second, the plug-in variance is
proportional to $\widetilde\mu_{n,\ell}^c$, so a downward error in
$\widetilde\mu_{n,\ell}^c$ shrinks the interval center and its width together.
Section~\ref{sec:monte-carlo} confirms substantial undercoverage. Both problems
motivate estimating the variance from the constrained-WLS residuals, which
retain the realized variation in the weighted score.

\subsection{Residual-based Gaussian inference}
\label{sec:residual-based-inference}

In this subsection, we propose an alternative method for Gaussian inference
based on constrained-WLS residuals. Define the residuals and the corresponding
weighted residual-score (RS) variance estimator by
\begin{equation}
\widehat M_{k,\ell}
:=V_k+\widetilde\beta_{n,\ell}^cV_{k-1}
-\widetilde\mu_{n,\ell}^c,
\qquad
\widehat\sigma_{\mu,n,\ell}^{2,\mathrm{RS}}
:=\frac1{H_{n,\ell}}\sum_{k=1}^n
\frac{\widehat M_{k,\ell}^2}{(k+\ell)^2}.
\label{eq:residual-score-variance}
\end{equation}

\begin{lemma}[Residual-score variance consistency]
\label{lem:residual-score-variance}
Under the assumptions of Theorem~\ref{thm:main}, the estimator in
\eqref{eq:residual-score-variance} satisfies
\[
\widehat\sigma_{\mu,n,\ell}^{2,\mathrm{RS}}
\Pto\sigma_\mu^2.
\]
\end{lemma}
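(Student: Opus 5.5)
Since $\varrho=1$, the constrained regression~\eqref{eq:wls-criterion-constrained} is correctly specified: $V_k=-\beta V_{k-1}+\mu+M_k$. The residuals therefore decompose as
\[
\widehat M_{k,\ell}=M_k+(\widetilde\beta_{n,\ell}^c-\beta)V_{k-1}-(\widetilde\mu_{n,\ell}^c-\mu).
\]
Squaring this splits $\widehat\sigma_{\mu,n,\ell}^{2,\mathrm{RS}}$ into a leading term
\[
S_n:=H_{n,\ell}^{-1}\sum_{k=1}^n (k+\ell)^{-2}M_k^2,
\]
plus two squared estimation-error terms and three cross terms. I plan to show $S_n\Pto\sigma_\mu^2$ and then show that everything else is $\op(1)$. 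For the remainder I use the rates in Theorem~\ref{thm:constrained-wls}, namely $\widetilde\beta_{n,\ell}^c-\beta=\Op(n^{-1/2})$ and $\widetilde\mu_{n,\ell}^c-\mu=\Op(H_{n,\ell}^{-1/2})$, together with the moment bounds $\E V_k^2=\mathrm O(k)$, $\E X_k^2=\mathrm O(k^2)$ and $\E M_k^4=\mathrm O(k^2)$. These bounds follow from the eighth-moment assumption as in BIP.

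\emph{Remainder terms.}
\begin{itemize}
\item The intercept-error square is $(\widetilde\mu^c_{n,\ell}-\mu)^2 H_{n,\ell}^{-1}\sum_k (k+\ell)^{-2}=\Op(H_{n,\ell}^{-2})$.
\item The slope-error square is $(\widetilde\beta^c_{n,\ell}-\beta)^2 H_{n,\ell}^{-1}\sum_k (k+\ell)^{-2}V_{k-1}^2$. The sum has expectation $\mathrm O(H_{n,\ell})$, so this term is $\Op(n^{-1})$.
\item The cross term between the two estimation errors is bounded by Cauchy--Schwarz using the two previous bounds.
\item The cross terms involving $M_k$ are handled similarly. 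Alternatively, $\sum_k(k+\ell)^{-2}M_k$ and $\sum_k(k+\ell)^{-2}M_kV_{k-1}$ are martingales with $\Op(1)$ and $\Op(H_{n,\ell}^{1/2})$ sizes respectively. After multiplication by the error rates and division by $H_{n,\ell}$, both vanish.
\end{itemize}

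\emph{Leading term.} I write $S_n=S_n^{(1)}+S_n^{(2)}$. Here
\[
S_n^{(2)}:=H_{n,\ell}^{-1}\sum_k (k+\ell)^{-2}\bigl\{M_k^2-\E(M_k^2\mid\mathcal F_{k-1})\bigr\}
\]
is a martingale. Its second moment is at most $H_{n,\ell}^{-2}\sum_k (k+\ell)^{-4}\E M_k^4=\mathrm O(H_{n,\ell}^{-2})$, so $S_n^{(2)}\to0$ in $L^2$.

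For the predictable part, using $\alpha=1-\beta$,
\[
\E(M_k^2\mid\mathcal F_{k-1})=\alpha\beta(X_{k-1}+X_{k-2})+\Var(\varepsilon_1).
\]
The innovation-variance contribution is $\mathrm O(H_{n,\ell}^{-1})$. It therefore suffices to prove a harmonic law of large numbers:
\[
L_n:=H_{n,\ell}^{-1}\sum_{k=1}^n\frac{X_{k-1}}{(k+\ell)^2}\Pto g=\frac{\mu}{1+\beta},
\]
and the same for $X_{k-2}$. Granting this, $S_n^{(1)}\Pto 2\alpha\beta g=2\alpha\beta\mu/(1+\beta)=\sigma_\mu^2$.

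\emph{Main obstacle: the harmonic law of large numbers.} This is the step I expect to be hardest, because $X_k/k$ does not converge almost surely; it converges only in law to $\mathcal X_1$, which is random. The plan is an $L^2$ argument.

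First, the mean. Under $\varrho=1$ the mean recursion $\E X_k=\E X_{k-1}-\beta(\E X_{k-1}-\E X_{k-2})+\mu$ has characteristic roots $1$ and $-\beta$. Hence $\E X_k=gk+\mathrm O(1)$, and so $\E L_n=g+\mathrm O(H_{n,\ell}^{-1})$.

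Second, the variance. For $j\le k$, $\E(X_k\mid\mathcal F_j)$ is an affine function of $(X_j,X_{j-1})$ with coefficients bounded uniformly in $k-j$, because the roots are $1$ and $-\beta$ with $|\beta|<1$. Therefore $|\Cov(X_j,X_k)|\le C(\E X_j^2+\E X_{j-1}^2)=\mathrm O(j^2)$. This gives
\[
\Var(L_n)\le \frac{C}{H_{n,\ell}^2}\sum_{j\le k\le n}\frac{j^2}{j^2k^2}=\mathrm O\!\left(\frac{1}{H_{n,\ell}}\right)\to0.
\]
Chebyshev's inequality then yields $L_n\Pto g$. The careful part is checking the uniform covariance bound, specifically that the oscillating $(-\beta)^{k-j}$ component of the conditional mean does not spoil it. Combining the leading term with the negligible remainders proves the lemma.
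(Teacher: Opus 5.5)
Your proof is correct and follows the paper's route: reduce to $H^{-1}\sum_k w_k^2M_k^2$ using the conditional-variance identity, an $L^2$ bound on the centered squares from $\E(M_k^4)=\mathrm O(k^2)$, and a weighted law of large numbers for $X_{k-r}$; then show the residual error is $\Op(n^{-1}+H^{-2})$ from the Theorem~\ref{thm:constrained-wls} rates and $\sum_k w_k^2V_{k-1}^2=\Op(H)$, concluding by Cauchy--Schwarz. The only substantive differences are two. First, the paper takes the weighted law from Lemma~\ref{lem:weighted-V-k2} (martingale representation of the centered process at $\ell=0$, then $w_k^2-k^{-2}=\mathrm O(k^{-3})$), whereas your Markov covariance bound $|\Cov(X_j,X_k)|\le Cj^2$ proves it directly for every $\ell$. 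Second, the moment bounds you cite from BIP hold at zero start, so for an arbitrary fixed initial pair you must either note that they persist (the initial individuals' descendants form an independent critical population without immigration with second moment $\mathrm O(k)$) or transfer via the extinction coupling based on Lemma~\ref{lem:old-family-survival}, as the paper does.
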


Put
\[
\widehat\sigma_{g,n,\ell}^{2,\mathrm{RS}}
:=\frac{\widehat\sigma_{\mu,n,\ell}^{2,\mathrm{RS}}}
{(1+\widetilde\beta_{n,\ell}^c)^2}.
\]
Let $\widehat\sigma_{\mu,n,\ell}^{\mathrm{RS}}$ and
$\widehat\sigma_{g,n,\ell}^{\mathrm{RS}}$ denote the nonnegative square
roots of the corresponding variance estimators.
The operational intervals are
\begin{align}
C_{\mu,n,\ell}^{\mathrm{RS}}(1-\gamma)
&:=\left[\widetilde\mu_{n,\ell}^c
\mathbin{\pm}z_{1-\gamma/2}
\frac{\widehat\sigma_{\mu,n,\ell}^{\mathrm{RS}}}
{\sqrt{H_{n,\ell}}}\right]\cap[0,\infty),
\label{eq:residual-score-mu-interval}\\
C_{g,n,\ell}^{\mathrm{RS}}(1-\gamma)
&:=\left[\widetilde g_{n,\ell}^c
\mathbin{\pm}z_{1-\gamma/2}
\frac{\widehat\sigma_{g,n,\ell}^{\mathrm{RS}}}
{\sqrt{H_{n,\ell}}}\right]\cap[0,\infty).
\label{eq:residual-score-g-interval}
\end{align}
An interval is available when the constrained fit is unique and finite and
the corresponding estimated variance is finite and positive; for $g$, the
denominator $1+\widetilde\beta_{n,\ell}^c$ must also be nonzero. Unavailable
intervals are represented by the empty set for coverage calculations.
Theorem~\ref{thm:constrained-wls},
Lemma~\ref{lem:residual-score-variance}, and the delta method imply
\[
\PP\{\mu\in C_{\mu,n,\ell}^{\mathrm{RS}}(1-\gamma)\}
\longrightarrow1-\gamma,
\qquad
\PP\{g\in C_{g,n,\ell}^{\mathrm{RS}}(1-\gamma)\}
\longrightarrow1-\gamma.
\]
Because both targets are positive, intersection with $[0,\infty)$ does not
alter these pointwise limits.

\subsection{Effect of OLS-test nonrejection}
\label{sec:pretest-effect}

Unit-root testing may affect both estimation and inference when the
constrained model is retained only after nonrejection. We show that, under
the unit-root null, this selection leaves the first-order distributions of
the estimators of $\mu$ and $g$ unchanged and preserves the asymptotic
coverage of their residual-based confidence intervals.
When the unit-root model is retained, estimation and inference use constrained
WLS at the same offset, regardless of which nuisance-calibration branch the
test selected.
For the operational selector in
\eqref{eq:ols-critical-value}, write
\[
A_{n,\ell}^{\mathrm{OLS}}
:=\{S_n^{\mathrm{OLS}}\geq q_{\tau,n,\ell}^{\mathrm{OLS}}\}
\]
for its nonrejection event.

\begin{coro}[First-order inference following nonrejection]
\label{cor:pretest-joint}
Under the assumptions of Theorem~\ref{thm:main} and the conditions
for the null assertion of Theorem~\ref{thm:ols-test}, the following
limits hold.
\[
\PP(A_{n,\ell}^{\mathrm{OLS}})\longrightarrow1-\tau,
\qquad
\mathcal L\!\left(
\sqrt{H_{n,\ell}}(\widetilde\mu_{n,\ell}^c-\mu)
\mid A_{n,\ell}^{\mathrm{OLS}}
\right)
\Longrightarrow\mathcal L(Z).
\]
Under the same conditions, for every $\gamma\in(0,1)$,
\begin{align*}
\PP\!\left\{
\mu\in C_{\mu,n,\ell}^{\mathrm{RS}}(1-\gamma)
\,\middle|\,A_{n,\ell}^{\mathrm{OLS}}\right\}
&\longrightarrow1-\gamma,\\
\PP\!\left\{
g\in C_{g,n,\ell}^{\mathrm{RS}}(1-\gamma)
\,\middle|\,A_{n,\ell}^{\mathrm{OLS}}\right\}
&\longrightarrow1-\gamma.
\end{align*}
\end{coro}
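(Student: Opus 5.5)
The plan is to show that $\sqrt{H_{n,\ell}}(\widetilde\mu_{n,\ell}^c-\mu)$ is asymptotically independent of the nonrejection event $A_{n,\ell}^{\mathrm{OLS}}$. Conditioning then has no first-order effect.

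The first limit, $\PP(A_{n,\ell}^{\mathrm{OLS}})\to1-\tau$, is the complement of the null assertion of Theorem~\ref{thm:ols-test}. Because $1-\tau>0$, the conditional law of any statistic $U_n$ satisfies
\[
\PP(U_n\le u\mid A_{n,\ell}^{\mathrm{OLS}})=\frac{\PP(\{U_n\le u\}\cap A_{n,\ell}^{\mathrm{OLS}})}{\PP(A_{n,\ell}^{\mathrm{OLS}})}.
\]
It therefore suffices to establish the joint limit
\[
\bigl(S_n^{\mathrm{OLS}},\,q_{\tau,n,\ell}^{\mathrm{OLS}},\,\sqrt{H_{n,\ell}}(\widetilde\mu_{n,\ell}^c-\mu)\bigr)\D\bigl(\Psi_\varrho,\,q_\tau^{\mathrm{OLS}}(\beta,\mu),\,Z\bigr)
\]
with $Z$ independent of $\Psi_\varrho$. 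The continuous mapping theorem, applied on the event $\{s\ge q\}$ whose boundary has probability zero by the assumption $\PP\{\Psi_\varrho=q_\tau^{\mathrm{OLS}}\}=0$, then gives
\[
\PP(\{U_n\le u\}\cap A_{n,\ell}^{\mathrm{OLS}})\to\PP(Z\le u)(1-\tau).
\]
The critical value converges in probability to a constant by Theorem~\ref{thm:ols-test}, so by Slutsky it joins any joint limit freely. The real task is the joint convergence of $S_n^{\mathrm{OLS}}$ and the constrained intercept error, together with their asymptotic independence.

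For that step, I would use the martingale structure. By Theorem~\ref{thm:constrained-wls} and the proof of Theorem~\ref{thm:main}, the constrained intercept error equals $H_{n,\ell}^{-1/2}\sum_k (k+\ell)^{-1}M_k+\op(1)$ up to normalization. The OLS statistic is a continuous functional of $(n^{-1}X_{\lfloor nt\rfloor}, n^{-1/2}\sum_{k\le nt}M_k)$, by BIP's Theorem~2.1. Its numerator and denominator involve sums that are dominated by $k$ of order $n$.

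The main obstacle is showing that the weighted intercept score is asymptotically independent of the path functional. I would handle it as follows. Split the weighted sum at $k=m_n:=\lfloor n/\log n\rfloor$ (or at $k=\lfloor n\varepsilon_n\rfloor$ with $\varepsilon_n\to0$ slowly). The part with $k>m_n$ contributes $\sum_{k>m_n}(k+\ell)^{-1}M_k$, whose variance is of order $\sum_{k>m_n} k^{-2}\E X_{k-1}=\mathrm O(\log\log n)$; after division by $H_{n,\ell}$ this is $\op(1)$. Hence the score is asymptotically $\mathcal F_{m_n}$-measurable. Conversely, the OLS statistic is asymptotically unaffected by the first $m_n$ observations, since $X_{m_n}/n\to0$ and the limit diffusion starts from $0$.

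I would then condition on $\mathcal F_{m_n}$. I would show that the conditional law of the rescaled path after time $m_n$ converges to the law of the diffusion started at $0$, uniformly over the relevant range of $X_{m_n}=\Op(m_n)=\op(n)$; continuity in the initial point of the squared-Bessel limit would give this. Given that, a standard argument yields
\[
\E\bigl[f(\text{score})\,g(S_n)\bigr]-\E f(\text{score})\,\E g(S_n)\to0
\]
for bounded continuous $f,g$. This is the same mechanism that produced the independence of $Z$ from $(\mathcal X,W,\widetilde W)$ in Theorem~\ref{thm:main}, and I would reuse that argument, or invoke the corresponding stable-convergence result, if the appendix provides one.

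The coverage statements then follow the same way. The RS interval event is
\[
\bigl\{|\sqrt{H_{n,\ell}}(\widetilde\mu_{n,\ell}^c-\mu)|\le z_{1-\gamma/2}\widehat\sigma_{\mu,n,\ell}^{\mathrm{RS}}\bigr\},
\]
up to the intersection with $[0,\infty)$ and availability events, which have probability tending to one. Lemma~\ref{lem:residual-score-variance} gives $\widehat\sigma_{\mu,n,\ell}^{\mathrm{RS}}\Pto\sigma_\mu$. Adding it to the joint limit via Slutsky, the probability of the intersection with $A_{n,\ell}^{\mathrm{OLS}}$ tends to $(1-\gamma)(1-\tau)$; dividing by $\PP(A_{n,\ell}^{\mathrm{OLS}})$ gives $1-\gamma$. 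For $g$, the mixed-rate delta method of Section~\ref{sec:gaussian-inference} shows that $\sqrt{H_{n,\ell}}(\widetilde g_{n,\ell}^c-g)=(1+\beta)^{-1}\sqrt{H_{n,\ell}}(\widetilde\mu_{n,\ell}^c-\mu)+\op(1)$. Combined with $\widetilde\beta^c_{n,\ell}\Pto\beta$, the same argument applies.
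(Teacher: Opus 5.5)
Your reduction is the paper's proof. The steps are: $\PP(A_{n,\ell}^{\mathrm{OLS}})\to1-\tau$; Slutsky for the consistent critical value; the portmanteau theorem with $\PP\{\Psi_\varrho=q_\tau^{\mathrm{OLS}}(\beta,\mu)\}=0$; division by $\PP(A_{n,\ell}^{\mathrm{OLS}})$; Lemma~\ref{lem:residual-score-variance} for the studentizer; and the mixed-rate delta method for $g$. The paper does not re-derive the key input, namely the joint limit of $(S_n^{\mathrm{OLS}},\sqrt{H_{n,\ell}}(\widetilde\mu_{n,\ell}^c-\mu))$ to $(\Psi_\varrho,Z)$ with $\Psi_\varrho\perp Z$. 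It cites Proposition~\ref{prop:joint-ols-wls}, which adjoins $\mathcal M^{(n)}$ to the blocking Lemma~\ref{lem:joint-process-Rn}. Your fallback of reusing the Theorem~\ref{thm:main} mechanism is therefore what the paper does. Your own derivation of that step, however, has a gap.

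Blocking at a time $m_n=o(n)$ with $H_{m_n}/H_{n,\ell}\to1$ is the right idea. The decoupling you propose, though, rests on an unproved claim. You need that, conditionally on $\mathcal F_{m_n}$, the law of the OLS statistic converges to that of $\Psi_\varrho$ uniformly over initial states $(X_{m_n},X_{m_n-1})$ of size $o(n)$. Continuity of the squared-Bessel law in its starting point does not give this. The invariance principle available here (BIP) is for a fixed zero start, and continuity of the limit says nothing about the prelimit chain started from a moving state.

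The required conditional limit also cannot be for the path alone. $S_n^{\mathrm{OLS}}$ is not a continuous functional of $\mathcal X^{(n)}$ and $n^{-1/2}\sum_{k\le nt}M_k$. That sum is of order $n$, so the correct normalization is $n^{-1}$, as in $\mathcal M^{(n)}$. Moreover, the numerator $\sum_kX_{k-1}M_k$ is a stochastic-integral sum that BIP control through $\mathcal N^{(n)}$, so the conditional limit would have to be joint with the score processes.

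The paper avoids any uniform-in-initial-state result by using the labelled branching realization. The immigrants arriving after $s_n$, together with their descendants, form a process with two properties. It is exactly independent of $\mathcal F_{s_n}^{\mathrm{br}}$, and hence of $Z_n^*$. It also has the law of a delayed zero-start process. The families already present at $s_n$ are extinct before $r_n=\lfloor n/\log n\rfloor$ with probability tending to one, by Lemma~\ref{lem:old-family-survival}. Doob-type maximal bounds then make $(\mathcal X^{(n)},\mathcal N^{(n)},\mathcal P^{(n)})$, and $\mathcal M^{(n)}$, uniformly close to their barred versions, so only BIP's zero-start limit is needed.

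Your conditioning route can be completed, but only by importing exactly such a coupling. An alternative is an invariance principle from convergent initial conditions, applied jointly to the score processes. As written, that step is asserted rather than proved.
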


The result follows from the asymptotic independence of the intercept
estimation error $Z$ and the OLS test limit $\Psi_\varrho$. Since the
estimated critical value converges to a constant, conditioning on
nonrejection leaves the limiting distribution of $Z$ unchanged. The mixed
convergence rates extend this conclusion to $g$ and other smooth functions
with a nonzero derivative with respect to $\mu$, whose first-order
estimation errors depend only on $Z$. By contrast, $\Xi_w$ may depend on
$\Psi_\varrho$, so the same conclusion does not follow for $\beta$ or
$\alpha$.

\section{Monte Carlo evidence}
\label{sec:monte-carlo}

\subsection{Estimation}
\label{sec:finite-sample}

We compare OLS, unrestricted WLS (U-WLS), and unit-root-constrained WLS
(C-WLS) in transient, boundary, and recurrent unit-root designs. The parameter
settings are given in Table~\ref{tab:mc-main-designs}. For each design, we
simulate $20{,}000$ independent Poisson-INAR(2) trajectories from zero initial
counts at $n\in\{50,100,1000\}$. All methods use the same trajectories, with
shorter samples nested in longer ones; both WLS estimators use $\ell=0$.

\begin{table}[H]
\centering
\small
\caption{Parameter settings for the exact-unit-root simulation designs.}
\label{tab:mc-main-designs}
\begin{tabular}{lrrrr}
\toprule
Regime & $\alpha$ & $\beta$ & $\mu$ & $\delta$ \\
\midrule
Transient & 0.70 & 0.30 & 1 & 12.381 \\
Boundary & 0.70 & 0.30 & $21/130$ & 2 \\
Recurrent & 0.80 & 0.20 & 0.10 & 1.5 \\
\bottomrule
\end{tabular}
\end{table}

Table~\ref{tab:mc-estimation} reports root mean squared error (RMSE) for
estimates of the innovation mean $\mu$, the long-run drift $g=\mu/(1+\beta)$,
and the autoregressive parameter $\beta$. It also reports bias for estimates
of $\mu$ and $g$. Each method estimates $g$
by substituting its estimates of $\beta$ and $\mu$ into this ratio. Under
C-WLS, $\widetilde\alpha_{n,\ell}^c:=1-\widetilde\beta_{n,\ell}^c$, so the
RMSEs for $\alpha$ and $\beta$ are equal.

\begin{table}[H]
\centering
\small
\setlength{\tabcolsep}{3.5pt}
\renewcommand{\arraystretch}{0.96}
\caption{Finite-sample estimation under the exact unit root.}
\label{tab:mc-estimation}
\makeatletter
\begin{tabular}{lrlrrrrr}
\toprule
Regime & $n$ & Estimator & RMSE$(\beta)$ & RMSE$(\mu)$ &
Bias$(\mu)$ & RMSE$(g)$ & Bias$(g)$ \\
\midrule
Transient & 50 & OLS & 0.1556 & 0.8258 & 0.4950 & 0.6662 & 0.4014 \\
Transient & 50 & U-WLS & 0.1444 & 0.5573 & 0.1345 & 0.4291 & 0.1061 \\
Transient & 50 & C-WLS & 0.1432 & 0.3701 & 0.0040 & 0.2686 & 0.0008 \\
Transient & 100 & OLS & 0.1109 & 0.8022 & 0.4940 & 0.6330 & 0.3911 \\
Transient & 100 & U-WLS & 0.1013 & 0.4719 & 0.1040 & 0.3598 & 0.0803 \\
Transient & 100 & C-WLS & 0.1010 & 0.3279 & 0.0017 & 0.2436 & 0.0004 \\
Transient & 1000 & OLS & 0.0351 & 0.7722 & 0.4845 & 0.5958 & 0.3737 \\
Transient & 1000 & U-WLS & 0.0314 & 0.3133 & 0.0544 & 0.2405 & 0.0417 \\
Transient & 1000 & C-WLS & 0.0314 & 0.2497 & 0.0016 & 0.1912 & 0.0011 \\
\addlinespace
Boundary & 50 & OLS & 0.1982 & 0.3739 & 0.2370 & 0.3142 & 0.2031 \\
Boundary & 50 & U-WLS & 0.1972 & 0.2498 & 0.0840 & 0.2215 & 0.0795 \\
Boundary & 50 & C-WLS & 0.1662 & 0.1440 & 0.0015 & 0.1090 & 0.0009 \\
Boundary & 100 & OLS & 0.1317 & 0.3728 & 0.2512 & 0.2998 & 0.2041 \\
Boundary & 100 & U-WLS & 0.1280 & 0.2130 & 0.0719 & 0.1742 & 0.0611 \\
Boundary & 100 & C-WLS & 0.1185 & 0.1295 & 0.0011 & 0.0989 & 0.0007 \\
Boundary & 1000 & OLS & 0.0396 & 0.3706 & 0.2615 & 0.2863 & 0.2023 \\
Boundary & 1000 & U-WLS & 0.0385 & 0.1363 & 0.0425 & 0.1052 & 0.0330 \\
Boundary & 1000 & C-WLS & 0.0383 & 0.1008 & 0.0009 & 0.0774 & 0.0007 \\
\addlinespace
Recurrent & 50 & OLS & 0.2141 & 0.2708 & 0.1584 & 0.2451 & 0.1482 \\
Recurrent & 50 & U-WLS & 0.2147 & 0.1961 & 0.0596 & 0.1894 & 0.0624 \\
Recurrent & 50 & C-WLS & 0.1789 & 0.1107 & 0.0006 & 0.0896 & 0.0000 \\
Recurrent & 100 & OLS & 0.1429 & 0.2646 & 0.1666 & 0.2300 & 0.1472 \\
Recurrent & 100 & U-WLS & 0.1434 & 0.1648 & 0.0512 & 0.1469 & 0.0481 \\
Recurrent & 100 & C-WLS & 0.1300 & 0.0986 & -0.0001 & 0.0810 & -0.0004 \\
Recurrent & 1000 & OLS & 0.0420 & 0.2579 & 0.1776 & 0.2159 & 0.1488 \\
Recurrent & 1000 & U-WLS & 0.0426 & 0.1035 & 0.0311 & 0.0866 & 0.0261 \\
Recurrent & 1000 & C-WLS & 0.0425 & 0.0752 & -0.0004 & 0.0626 & -0.0004 \\

\bottomrule
\end{tabular}
\makeatother

\begin{minipage}{0.95\linewidth}
\footnotesize
\emph{Notes:} Results use replications for which all three estimators are
defined; fewer than $1\%$ were excluded in any design--sample-size
combination. Monte Carlo standard errors for the reported bias and RMSE
estimates do not exceed $0.005$.
\end{minipage}
\end{table}

Table~\ref{tab:mc-estimation} shows little decline in OLS RMSE for $\mu$ and
$g$ as the sample grows, in line with the theoretical inconsistency of the
OLS intercept estimator under the unit root. U-WLS reduces the RMSE of $\mu$ by $28\%$--$63\%$
relative to OLS, with similar gains for $g$. Imposing the correct unit-root
restriction yields further reductions and small bias in both targets. The
methods have broadly comparable RMSE for $\beta$, with modest benefits from
weighting in the transient and boundary designs and a slight loss for U-WLS
in the recurrent design.

\subsection{OLS unit-root test}
\label{sec:mc-ols-test}

We compare the feasible OLS test with an infeasible oracle whose critical
values use the true nuisance parameters, using the three designs and sample
sizes from the estimation experiment. Size is assessed under $\varrho=1$,
and power under the fixed stationary alternative $\varrho=0.95$, holding
$\beta$ and $\mu$ unchanged and setting $\alpha=\varrho-\beta$. For each
design--hypothesis combination, we simulate $50{,}000$ independent trajectories
with Poisson innovations and zero initial counts. Both tests use the same
simulated trajectories and numerical quantile map. The feasible test uses
the reference offset $\ell=0$ and threshold $\eta_{n,0}=H_{n,0}^{-1/2}$.

Critical values are approximated using simulated squared-Bessel paths, with
numerical calibration details provided in
Appendix~\ref{app:ols-quantile-calibration}.

\begin{table}[H]
\centering
\scriptsize
\setlength{\tabcolsep}{3pt}
\renewcommand{\arraystretch}{0.96}
\caption{Size and power of the OLS oracle and the proposed feasible OLS test.}
\label{tab:mc-ols-test}
\makeatletter
\begin{tabular}{lrrrrr}
\toprule
& & \multicolumn{2}{c}{OLS oracle (infeasible)} &
\multicolumn{2}{c}{Proposed feasible test} \\
\cmidrule(lr){3-4}\cmidrule(lr){5-6}
Regime & $n$ & $\varrho=1$ & $\varrho=.95$ &
$\varrho=1$ & $\varrho=.95$ \\
\midrule
Transient & 50 & 6.36 (0.11) & 42.27 (0.22) & 8.76 (0.13) & 30.07 (0.21) \\
 & 100 & 5.53 (0.10) & 76.29 (0.19) & 6.82 (0.11) & 49.50 (0.22) \\
 & 1000 & 5.14 (0.10) & 100.00 (0.00) & 5.02 (0.10) & 100.00 (0.00) \\
\addlinespace
Boundary & 50 & 4.48 (0.09) & 8.75 (0.13) & 6.42 (0.11) & 9.60 (0.13) \\
 & 100 & 4.72 (0.09) & 16.02 (0.16) & 5.52 (0.10) & 12.03 (0.15) \\
 & 1000 & 4.90 (0.10) & 99.67 (0.03) & 3.93 (0.09) & 80.89 (0.18) \\
\addlinespace
Recurrent & 50 & 4.83 (0.10) & 8.48 (0.12) & 9.19 (0.13) & 12.19 (0.15) \\
 & 100 & 4.57 (0.09) & 14.19 (0.16) & 6.85 (0.11) & 13.51 (0.15) \\
 & 1000 & 5.00 (0.10) & 99.29 (0.04) & 4.40 (0.09) & 81.53 (0.17) \\

\bottomrule
\end{tabular}
\makeatother

\begin{minipage}{0.96\linewidth}
\footnotesize
\emph{Notes:} Entries are rejection percentages at the nominal $5\%$ level,
with Monte Carlo standard errors in percentage points in parentheses.
The transient, boundary, and recurrent labels refer to the corresponding
unit-root designs.
Unavailable tests count as nonrejections. The standard errors exclude
numerical uncertainty in the critical values, and power comparisons are not
size adjusted.
\end{minipage}
\end{table}

The infeasible OLS oracle remains close to nominal size across all nine null
settings, whereas the feasible test overrejects in small samples. At $n=1000$,
the feasible test's null rejection rates range from $3.93\%$ to $5.02\%$.
Under the stationary alternative, oracle power is higher in the transient
design at the two shorter sample sizes and approaches one in all three
designs at $n=1000$. Because $\mu$ varies across designs, these power
comparisons involve different long-run mean count levels $\mu/(1-\varrho)$.
The feasible test's power remains below the oracle's in the boundary and
recurrent designs at this sample size. Since both tests use the same statistic
and numerical quantile map, this gap reflects nuisance estimation and the
choice between constrained and unrestricted calibration.

To assess how the selection threshold affects finite-sample size and power,
we vary the multiplier $c_{\mathrm{sel}}$ in
$\eta_{n,0}=c_{\mathrm{sel}}H_{n,0}^{-1/2}$. Smaller multipliers favor
constrained-WLS calibration. In the boundary and recurrent designs, this
reduces both null rejection rates and power, revealing a size--power tradeoff.
The full sensitivity results are reported in
Appendix~\ref{app:ols-test-tuning},
Table~\ref{tab:mc-wls-fallback-sensitivity}.

\subsection{Constrained inference}
\label{sec:mc-constrained-inference}

We evaluate confidence intervals under the maintained unit-root model using the same
three parameter designs. Alongside Poisson innovations, we consider
negative-binomial innovations with the same mean and twice the variance.
For each design and innovation law, we simulate $5{,}000$ trajectories at the
sample sizes used in the estimation experiment. Simulations start from zero
counts, with shorter samples nested in longer ones and offsets $\ell=0$ and
$\ell=5$ compared on the same trajectories.

We compare residual-score (RS) intervals with parameter plug-in intervals.
Coverage is evaluated under the maintained unit-root model, without first
applying the unit-root test.

\begin{table}[!htbp]
\centering
\scriptsize
\setlength{\tabcolsep}{2pt}
\renewcommand{\arraystretch}{0.94}
\caption{Coverage and mean length of nominal $95\%$ intervals under the unit root.}
\label{tab:mc-constrained-inference}
\makeatletter
\begin{tabular}{lrrrrcccc}
\toprule
& & & \multicolumn{2}{c}{Defined (\%)} &
\multicolumn{2}{c}{$\mu$} & \multicolumn{2}{c}{$g$} \\
\cmidrule(lr){4-5}\cmidrule(lr){6-7}\cmidrule(lr){8-9}
Regime & $n$ & $\ell$ & Plug-in & RS & Plug-in & RS & Plug-in & RS \\
\midrule
\multicolumn{9}{l}{\textit{Panel A: Poisson innovations}} \\
Transient & 50 & 0 & 97.9 & 100.0 & 78.7\,\% (0.992) & 87.2\,\% (1.195) & 80.7\,\% (0.751) & 89.4\,\% (0.924) \\
Transient & 50 & 5 & 97.8 & 100.0 & 91.7\,\% (1.381) & 92.5\,\% (1.245) & 92.6\,\% (1.049) & 93.2\,\% (0.969) \\
Transient & 100 & 0 & 99.7 & 100.0 & 84.6\,\% (0.944) & 90.0\,\% (1.118) & 85.4\,\% (0.722) & 90.5\,\% (0.862) \\
Transient & 100 & 5 & 99.8 & 100.0 & 95.0\,\% (1.248) & 93.6\,\% (1.147) & 95.4\,\% (0.957) & 94.2\,\% (0.887) \\
Transient & 1000 & 0 & 100.0 & 100.0 & 90.1\,\% (0.809) & 92.9\,\% (0.920) & 90.1\,\% (0.622) & 92.8\,\% (0.707) \\
Transient & 1000 & 5 & 100.0 & 100.0 & 96.1\,\% (0.970) & 94.9\,\% (0.914) & 96.2\,\% (0.746) & 94.9\,\% (0.704) \\
Boundary & 50 & 0 & 95.9 & 99.9 & 72.8\,\% (0.332) & 79.2\,\% (0.374) & 72.3\,\% (0.251) & 79.8\,\% (0.290) \\
Boundary & 50 & 5 & 95.9 & 99.9 & 85.5\,\% (0.418) & 89.9\,\% (0.388) & 85.6\,\% (0.317) & 89.9\,\% (0.303) \\
Boundary & 100 & 0 & 98.9 & 100.0 & 79.4\,\% (0.323) & 82.7\,\% (0.362) & 79.6\,\% (0.247) & 83.0\,\% (0.279) \\
Boundary & 100 & 5 & 99.0 & 100.0 & 90.5\,\% (0.394) & 92.0\,\% (0.372) & 90.8\,\% (0.302) & 91.9\,\% (0.288) \\
Boundary & 1000 & 0 & 100.0 & 100.0 & 87.2\,\% (0.299) & 88.6\,\% (0.325) & 87.0\,\% (0.229) & 88.3\,\% (0.250) \\
Boundary & 1000 & 5 & 100.0 & 100.0 & 95.1\,\% (0.344) & 94.3\,\% (0.328) & 95.1\,\% (0.264) & 94.1\,\% (0.252) \\
Recurrent & 50 & 0 & 89.6 & 99.1 & 63.2\,\% (0.219) & 73.4\,\% (0.246) & 62.3\,\% (0.175) & 74.3\,\% (0.205) \\
Recurrent & 50 & 5 & 89.2 & 99.1 & 74.5\,\% (0.278) & 85.5\,\% (0.261) & 73.9\,\% (0.223) & 85.7\,\% (0.220) \\
Recurrent & 100 & 0 & 94.8 & 100.0 & 70.7\,\% (0.212) & 78.0\,\% (0.238) & 70.0\,\% (0.172) & 78.0\,\% (0.198) \\
Recurrent & 100 & 5 & 95.0 & 100.0 & 83.0\,\% (0.261) & 88.9\,\% (0.248) & 83.2\,\% (0.213) & 89.0\,\% (0.208) \\
Recurrent & 1000 & 0 & 100.0 & 100.0 & 84.1\,\% (0.198) & 86.0\,\% (0.217) & 84.1\,\% (0.164) & 85.9\,\% (0.181) \\
Recurrent & 1000 & 5 & 100.0 & 100.0 & 93.3\,\% (0.229) & 93.2\,\% (0.221) & 93.4\,\% (0.190) & 93.5\,\% (0.184) \\
\addlinespace
\multicolumn{9}{l}{\textit{Panel B: Negative binomial innovations}} \\
Transient & 50 & 0 & 97.7 & 100.0 & 68.8\,\% (0.977) & 84.9\,\% (1.405) & 70.3\,\% (0.737) & 86.2\,\% (1.079) \\
Transient & 50 & 5 & 98.0 & 100.0 & 88.7\,\% (1.367) & 91.7\,\% (1.369) & 89.9\,\% (1.038) & 92.8\,\% (1.062) \\
Transient & 100 & 0 & 99.7 & 100.0 & 74.6\,\% (0.932) & 87.5\,\% (1.307) & 75.6\,\% (0.711) & 88.2\,\% (1.002) \\
Transient & 100 & 5 & 99.8 & 100.0 & 92.0\,\% (1.240) & 93.1\,\% (1.242) & 93.3\,\% (0.949) & 93.7\,\% (0.959) \\
Transient & 1000 & 0 & 100.0 & 100.0 & 82.6\,\% (0.801) & 91.4\,\% (1.050) & 83.1\,\% (0.615) & 91.5\,\% (0.807) \\
Transient & 1000 & 5 & 100.0 & 100.0 & 94.0\,\% (0.963) & 94.1\,\% (0.958) & 94.2\,\% (0.740) & 94.2\,\% (0.737) \\
Boundary & 50 & 0 & 96.1 & 99.6 & 64.5\,\% (0.317) & 72.6\,\% (0.396) & 63.7\,\% (0.239) & 73.5\,\% (0.304) \\
Boundary & 50 & 5 & 96.1 & 99.6 & 79.9\,\% (0.411) & 84.2\,\% (0.409) & 80.0\,\% (0.312) & 84.8\,\% (0.318) \\
Boundary & 100 & 0 & 99.1 & 100.0 & 70.5\,\% (0.311) & 77.0\,\% (0.382) & 70.9\,\% (0.237) & 77.3\,\% (0.292) \\
Boundary & 100 & 5 & 99.1 & 100.0 & 86.7\,\% (0.391) & 87.9\,\% (0.387) & 86.7\,\% (0.299) & 88.4\,\% (0.299) \\
Boundary & 1000 & 0 & 100.0 & 100.0 & 80.5\,\% (0.291) & 85.0\,\% (0.341) & 80.5\,\% (0.223) & 85.0\,\% (0.262) \\
Boundary & 1000 & 5 & 100.0 & 100.0 & 92.5\,\% (0.340) & 92.4\,\% (0.336) & 92.6\,\% (0.261) & 92.6\,\% (0.258) \\
Recurrent & 50 & 0 & 86.6 & 96.8 & 55.9\,\% (0.217) & 67.7\,\% (0.271) & 54.7\,\% (0.173) & 68.9\,\% (0.226) \\
Recurrent & 50 & 5 & 86.6 & 96.8 & 68.7\,\% (0.282) & 79.5\,\% (0.284) & 68.0\,\% (0.226) & 80.0\,\% (0.240) \\
Recurrent & 100 & 0 & 93.9 & 99.9 & 64.8\,\% (0.210) & 73.8\,\% (0.256) & 64.4\,\% (0.170) & 73.8\,\% (0.213) \\
Recurrent & 100 & 5 & 94.0 & 99.9 & 79.0\,\% (0.265) & 85.2\,\% (0.265) & 78.8\,\% (0.216) & 85.5\,\% (0.222) \\
Recurrent & 1000 & 0 & 100.0 & 100.0 & 78.8\,\% (0.195) & 82.6\,\% (0.230) & 79.1\,\% (0.162) & 82.8\,\% (0.192) \\
Recurrent & 1000 & 5 & 100.0 & 100.0 & 91.1\,\% (0.229) & 91.5\,\% (0.229) & 91.2\,\% (0.190) & 91.4\,\% (0.191) \\

\bottomrule
\end{tabular}
\makeatother

\begin{minipage}{0.97\linewidth}
\footnotesize
\emph{Notes:} Nominal coverage is $95\%$. Interval entries give unconditional
coverage in percent, with mean length in parentheses. The two defined-rate
columns give the percentage of samples for which each method's interval for
$\mu$ exists; the corresponding rates for $g$ are the same in these simulations.
Unavailable intervals count as noncoverage. Mean interval lengths are computed
over samples with defined intervals, after intersection with $[0,\infty)$.
The largest Monte Carlo standard error among the reported RS
coverage estimates is $0.67$ percentage point.
\end{minipage}
\end{table}

At $\ell=0$, residual-score intervals have higher coverage than plug-in
intervals in every reported design
(Table~\ref{tab:mc-constrained-inference}). Negative-binomial innovations
produce wider residual-score intervals and lower coverage than Poisson
innovations, with pronounced shortfalls in short boundary and recurrent
samples. Increasing the offset to $\ell=5$ raises residual-score coverage,
while the two methods vary in their closeness to nominal coverage across
designs.

We also examine a low-innovation design calibrated to the primary HFE fit
in Section~\ref{sec:flood-illustration}, with squared-Bessel dimension
$\delta\approx0.25$. We use the application's sample size, $n=123$, and a
longer sample of $n=200$, retaining the preceding experiment's innovation
laws, replication count, and paired offsets. These simulations start from
zero counts, whereas the observed HFE series starts from $(3,3)$, and do not
apply the unit-root test before constructing intervals. Intervals computed with the true
asymptotic variance (A-var) provide a benchmark for assessing the contribution
of variance estimation to undercoverage.

\begin{table}[H]
\centering
\scriptsize
\setlength{\tabcolsep}{3pt}
\caption{Application-calibrated inference stress design:
$\delta\approx0.250$.}
\label{tab:mc-application-stress}
\makeatletter
\begin{tabular}{rrrcccc}
\toprule
$n$ & $\ell$ & \shortstack{Fit valid\\(\%)} &
$\mu$: RS & $\mu$: A-var & $g$: RS & $g$: A-var \\
\midrule
\multicolumn{7}{l}{\textit{Panel A: Poisson innovations}} \\
123 & 0 & 91.9\,\% & 63.9\,\% (0.079) & 85.8\,\% & 65.1\,\% (0.056) & 85.6\,\% \\
123 & 5 & 91.9\,\% & 75.7\,\% (0.089) & 88.2\,\% & 76.3\,\% (0.063) & 88.0\,\% \\
200 & 0 & 98.3\,\% & 68.7\,\% (0.074) & 92.2\,\% & 69.2\,\% (0.051) & 92.0\,\% \\
200 & 5 & 98.3\,\% & 80.4\,\% (0.082) & 94.4\,\% & 81.0\,\% (0.057) & 94.1\,\% \\
\addlinespace
\multicolumn{7}{l}{\textit{Panel B: Negative binomial innovations}} \\
123 & 0 & 83.3\,\% & 56.3\,\% (0.086) & 77.7\,\% & 57.3\,\% (0.060) & 77.7\,\% \\
123 & 5 & 83.3\,\% & 67.0\,\% (0.096) & 79.4\,\% & 67.6\,\% (0.067) & 79.5\,\% \\
200 & 0 & 94.6\,\% & 61.4\,\% (0.076) & 88.9\,\% & 62.3\,\% (0.053) & 89.0\,\% \\
200 & 5 & 94.6\,\% & 72.6\,\% (0.085) & 90.7\,\% & 73.5\,\% (0.059) & 90.6\,\% \\

\bottomrule
\end{tabular}
\makeatother

\begin{minipage}{0.96\linewidth}
\footnotesize
\emph{Notes:} Nominal coverage is $95\%$. RS entries give unconditional
coverage in percent, with mean length in parentheses; A-var entries give
unconditional coverage using the true asymptotic variance. Conditional
coverage among valid fits is the A-var coverage divided by the fit-valid
percentage, times $100$. A valid fit has a unique, finite constrained
estimate; it need not belong to $\Theta_0$. Undefined fits
count as noncoverage. The largest coverage Monte Carlo standard error is
$0.71$ percentage point.
The parameter setting is
$(\alpha,\beta,\mu)\approx(0.538,0.462,0.0212)$. Simulations use
$(X_{-1},X_0)=(0,0)$; the observed HFE initial pair is $(3,3)$.
Each innovation law uses $5{,}000$ replications, with offsets $\ell=0$ and $5$
paired on each trajectory and shorter samples nested in longer ones.
\end{minipage}
\end{table}

Table~\ref{tab:mc-application-stress} shows substantial undercoverage at the
application's sample size, especially with overdispersed innovations. Using
the true asymptotic variance brings coverage close to nominal among samples
with valid fits; unavailable fits account for much of the remaining
unconditional shortfall for this benchmark.
Increasing the offset from $\ell=0$ to $\ell=5$ improves
residual-based coverage but does not eliminate the shortfall.

\section{Application to Canadian flood counts}
\label{sec:flood-illustration}

We analyze annual flood counts from two Canadian inventories. The Canadian
Disaster Database (CDD) records significant disasters, whereas Historical
Flood Events (HFE) compiles documented flood occurrences from multiple
sources. Their different inclusion criteria provide a useful comparison of
how inventory choice affects conclusions about persistence.

We count distinct events by calendar year, retaining years with no recorded
events, over 1902--2022 for CDD and 1900--2024 for HFE. HFE records substantially
more events and greater annual variation than CDD, while CDD contains more
zero-count years (Table~\ref{tab:flood-summary}). The source snapshots,
selection rules, and annual aggregation are described in
Appendix~\ref{app:flood-data}.

\begin{table}[H]
\centering
\scriptsize
\setlength{\tabcolsep}{3.1pt}
\caption{Descriptive statistics for the two annual Canadian flood-count
series.}
\label{tab:flood-summary}
\makeatletter
\begin{tabular}{llrrrrrrrr}
\toprule
Data & Years & $N$ & $n$ & Events & Mean & SD & Median & Range & Zero years \\
\midrule
CDD & 1902--2022 & 121 & 119 & 344 & 2.843 & 2.652 & 2 & 0--11 & 28 \\
HFE & 1900--2024 & 125 & 123 & 1398 & 11.184 & 11.539 & 6 & 0--57 & 4 \\

\bottomrule
\end{tabular}
\makeatother

\begin{minipage}{0.96\linewidth}
\footnotesize
\emph{Notes:} Events are distinct source-specific identifiers. $N$ is the
number of zero-completed annual observations, and $n=N-2$ is the number of
fitted transitions after conditioning on the first two counts. SD is the
sample standard deviation of the annual counts.
\end{minipage}
\end{table}

We condition on the first two annual counts and apply the OLS unit-root test
at the $5\%$ level. Following \citet{lu2026intercept}, we use $\ell=0$ as the
primary offset and $\ell=2$ to assess sensitivity to the time origin.
Table~\ref{tab:flood-unit-root} reports the selected calibration branch and
its fitted nuisance pair.

\begin{table}[H]
\centering
\footnotesize
\setlength{\tabcolsep}{2.5pt}
\caption{OLS unit-root tests at the 5\% level.}
\label{tab:flood-unit-root}
\makeatletter
\begin{tabular}{lrlrrrrl}
\toprule
Data & $\ell$ & Calibration & $b$ & $m$ & OLS statistic & Critical value & Decision \\
\midrule
CDD & 0 & C-WLS & 0.557 & 0.179 & -33.912 & -23.145 & Reject \\
CDD & 2 & C-WLS & 0.488 & 0.076 & -33.912 & -40.862 & Do not reject \\
HFE & 0 & U-WLS & 0.423 & 0.287 & -12.041 & -16.442 & Do not reject \\
HFE & 2 & C-WLS & 0.453 & 0.099 & -12.041 & -33.527 & Do not reject \\

\bottomrule
\end{tabular}
\makeatother

\begin{minipage}{0.96\linewidth}
\footnotesize
\emph{Notes:} The unit-root null is rejected when the OLS statistic is below
the selected critical value. C-WLS and U-WLS denote constrained and
unrestricted WLS calibration, respectively; $(b,m)$ is the selected fitted
nuisance pair used to evaluate $q_{.05}^{\mathrm{OLS}}(b,m)$.
\end{minipage}
\end{table}

For CDD, the test rejects the unit-root null at $\ell=0$ but not at
$\ell=2$. For HFE, it does not reject at either offset. We therefore report
constrained estimation and inference for HFE under the maintained unit-root
model.

At the primary offset the estimated innovation mean is $0.021$, with a
nominal $95\%$ residual-score interval of $[0,0.567]$
(Table~\ref{tab:hfe-residual-inference}); the interval for the long-run
drift is correspondingly wide. The application-calibrated simulations in
Section~\ref{sec:mc-constrained-inference} show substantial undercoverage at
$n=123$. They use zero initial counts and do not condition on test
nonrejection, so they do not estimate coverage for the full empirical
procedure. We therefore report the intervals as nominal asymptotic intervals;
their finite-sample $95\%$ coverage is not established for this application.

\begin{table}[H]
\centering
\small
\setlength{\tabcolsep}{3pt}
\caption{Residual-score Gaussian inference for HFE following unit-root
nonrejection.}
\label{tab:hfe-residual-inference}
\makeatletter
\begin{tabular}{rrrrcc}
\toprule
$\ell$ & $\widetilde\beta^c$ & $\widetilde\mu^c$ &
$\widetilde g^c$ & $95\%$ interval for $\mu$ &
$95\%$ interval for $g$ \\
\midrule
0 & 0.462 & 0.021 & 0.015 & $[0.000, 0.567]$ & $[0.000, 0.388]$ \\
2 & 0.453 & 0.099 & 0.068 & $[0.000, 0.704]$ & $[0.000, 0.485]$ \\

\bottomrule
\end{tabular}
\makeatother

\begin{minipage}{0.96\linewidth}
\footnotesize
\emph{Notes:} The constrained fits condition on the observed HFE initial pair
$(X_{-1},X_0)=(3,3)$. Intervals use
\eqref{eq:residual-score-mu-interval}--\eqref{eq:residual-score-g-interval}
and are intersected with $[0,\infty)$. The primary specification is $\ell=0$;
$\ell=2$ was fixed in advance as the application sensitivity check.
\end{minipage}
\end{table}

\section{Conclusion}
We have shown that, unlike OLS, inverse-time WLS estimates the intercept of a
unit-root INAR(2) process consistently, with a Gaussian limit independent of
the autoregressive limits. Under the unit-root model, this independence keeps
residual-score intervals for the innovation mean and long-run drift pointwise
asymptotically valid after nonrejection by our WLS-calibrated OLS unit-root
test. In simulations, WLS also sharply reduces intercept RMSE relative to OLS.

Two limitations suggest further work. First, the intercept estimator
converges only at rate $\sqrt{\log n}$. For first-order integrated
Galton--Watson processes, \citet{lu2026stateweights} shows that state
weighting yields faster intercept convergence under recurrence, and a smaller
asymptotic variance under transience, than inverse-time weighting. Whether
similar gains can be obtained for unit-root INAR(2) models remains an open
question.
Second, residual-score intervals exhibit substantial finite-sample
undercoverage, particularly in the design calibrated to the application.
Improving their finite-sample coverage remains an open problem.

\section*{Declaration of generative AI and AI-assisted technologies in the manuscript preparation process}
During the preparation of this manuscript, the authors used OpenAI Codex to
assist with literature searches, review of mathematical arguments, language
and exposition, and LaTeX formatting. The authors reviewed and verified the
resulting content and take full responsibility for the manuscript.


\clearpage
\appendix
\numberwithin{equation}{section}
\numberwithin{table}{section}
\numberwithin{figure}{section}

\section{Preliminary results}

Throughout the appendix, the unit-root assumption is in force except where
a stationary alternative is explicitly considered.
The symbol $C\in(0,\infty)$ denotes a generic constant that may
change from line to line. Besides the model parameters, it may depend on the
fixed initial condition and on the fixed offset $\ell$ when these are present.
We also write $c_\beta:=2\beta/(1+\beta)$.

Until the fixed-initial-condition and fixed-offset transfer in
Proposition~\ref{prop:joint-convergence}, we work under
$X_{-1}=X_0=0$ and $\ell=0$, and abbreviate
$H_{n,0}$, $A_{n,0}^{(w)}$, and $d_{n,0}^{(w)}$ by
$H_n$, $A_n^{(w)}$, and $d_n^{(w)}$. The transfer proposition extends the joint
design-and-score limit to every fixed initial condition and every fixed
$\ell\geq0$.

We use the scaled step process
\[
\mathcal X_t^{(n)}:=n^{-1}X_{\lfloor nt\rfloor},
\qquad t\in[0,1].
\]
For the appendix, define $U_k:=X_k+\beta X_{k-1}$.
We also use the step processes in equation~(5.6) of BIP:
\[
\mathcal M_t^{(n)}:=\sum_{k=1}^{\lfloor nt\rfloor} n^{-1}M_k,
\qquad
\mathcal N_t^{(n)}:=\sum_{k=1}^{\lfloor nt\rfloor} n^{-2}M_kU_{k-1},
\qquad
\mathcal P_t^{(n)}:=\sum_{k=1}^{\lfloor nt\rfloor} n^{-3/2}M_kV_{k-1}.
\]
That equation gives directly
\begin{equation}
\bigl(\mathcal X^{(n)},\mathcal M^{(n)},\mathcal N^{(n)},\mathcal P^{(n)}\bigr)
\D
\bigl(\mathcal X,\mathcal M,\mathcal N,\mathcal P\bigr)
\quad\text{in }D([0,1],\R^4),
\label{eq:barczy-process-limit}
\end{equation}
where
\[
\mathcal M_t=(1+\beta)\mathcal X_t-\mu t,
\qquad
\mathcal N_t=(1+\beta)\sqrt{2\alpha\beta}\int_0^t \mathcal X_s^{3/2}\,dW_s,
\qquad
\mathcal P_t=2\beta\sqrt{\frac{\alpha}{1+\beta}}\int_0^t \mathcal X_s\,d\widetilde W_s.
\]

We use the following decomposition throughout the appendix. Let $m_k:=\E(X_k)$ and define the centered processes
\[
\widetilde U_k:=U_k-\E(U_k)=\sum_{j=1}^kM_j,
\qquad
\widetilde V_k:=V_k-\E(V_k)=\sum_{j=1}^k(-\beta)^{k-j}M_j.
\]
Since $X_k=(U_k+\beta V_k)/(1+\beta)$,
\begin{equation}
X_k
=
m_k+\frac{\widetilde U_k+\beta\widetilde V_k}{1+\beta},
\qquad
m_k
=
\frac{\mu}{1+\beta}\,k
+\frac{\mu\beta}{(1+\beta)^2}\bigl(1-(-\beta)^k\bigr).
\label{eq:centered-decomposition}
\end{equation}
Indeed, the canonical recursions give $U_k=U_{k-1}+\mu+M_k$ and $V_k=-\beta V_{k-1}+\mu+M_k$; centering and iteration give the displayed formulas, while summing
\[
m_k-m_{k-1}
=
\frac{\mu}{1+\beta}\bigl(1-(-\beta)^k\bigr)
\]
gives the expression for $m_k$. We also repeatedly use the moment bounds from Corollary~A.4 of BIP:
\begin{equation}
\begin{gathered}
\E(X_k)=\mathrm O(k),\qquad
\E(X_k^2)=\mathrm O(k^2),\qquad
\E(M_k^2)=\mathrm O(k),\qquad
\E(M_k^4)=\mathrm O(k^2),\\
\E(U_k^4)=\mathrm O(k^4),\qquad
\E(V_k^2)=\mathrm O(k),\qquad
\E(V_k^4)=\mathrm O(k^2).
\end{gathered}
\label{eq:moment-bounds}
\end{equation}

\needspace{10\baselineskip}
\section{Auxiliary weighted-sum estimates}

The following lemma collects the initial-segment estimates needed to control
the truncation error in Proposition~\ref{prop:joint-convergence-base}.

\begin{lemma}
\label{lem:initial-segment-bounds}
Fix $\zeta\in(0,1)$ and let $j_n(\zeta):=\lfloor n\zeta\rfloor+1$. Uniformly in $n$,
\begin{align}
\E\left[\frac1n\sum_{k=1}^{j_n(\zeta)-1}\frac{X_{k-1}}{k}\right]
&\le C\zeta,
&
\E\left[\frac1{n^2}\sum_{k=1}^{j_n(\zeta)-1}\frac{X_{k-1}^2}{k}\right]
&\le C\zeta^2,
\label{eq:initial-segment-design}\\
\E\left(\frac1n\sum_{k=1}^{j_n(\zeta)-1}\frac{M_kU_{k-1}}{k}\right)^2
&\le C\zeta^2,
&
\E\left(\frac1{\sqrt n}\sum_{k=1}^{j_n(\zeta)-1}\frac{M_kV_{k-1}}{k}\right)^2
&\le C\zeta.
\label{eq:initial-segment-score}
\end{align}
Moreover,
\begin{equation}
\E\left(\frac1n\sum_{k=1}^n\frac{M_kV_{k-1}}{k}\right)^2
\le \frac Cn.
\label{eq:negligible-V-score}
\end{equation}
\end{lemma}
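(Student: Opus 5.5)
The design bounds in \eqref{eq:initial-segment-design} follow directly from the moment bounds \eqref{eq:moment-bounds}. Since $\E(X_{k-1})\le C(k-1)\le Ck$, we have
\[
\E\Bigl[\frac1n\sum_{k<j_n(\zeta)}\frac{X_{k-1}}{k}\Bigr]\le \frac Cn\,\#\{k\le n\zeta\}\le C\zeta.
\]
Similarly, $\E(X_{k-1}^2)\le Ck^2$ gives
\[
\frac1{n^2}\sum_{k\le n\zeta}Ck\le C\zeta^2 .
\]
Here $j_n(\zeta)-1=\lfloor n\zeta\rfloor\le n\zeta$, and the terms with $k=1$ vanish or are bounded because the initial values are zero.

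For the score bounds in \eqref{eq:initial-segment-score} and for \eqref{eq:negligible-V-score}, I would use martingale orthogonality. Both $U_{k-1}$ and $V_{k-1}$ are $\mathcal F_{k-1}$-measurable, and $(M_k)$ is a martingale difference sequence. Hence the cross terms vanish and
\[
\E\Bigl(\sum_k \frac{M_kY_{k-1}}{k}\Bigr)^2=\sum_k\frac{\E(M_k^2Y_{k-1}^2)}{k^2},\qquad Y\in\{U,V\}.
\]
Each summand is bounded by Cauchy--Schwarz, $\E(M_k^2Y_{k-1}^2)\le \{\E(M_k^4)\E(Y_{k-1}^4)\}^{1/2}$, combined with \eqref{eq:moment-bounds}.

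For $Y=U$, this gives $\E(M_k^2U_{k-1}^2)\le (Ck^2\cdot Ck^4)^{1/2}=Ck^3$. Therefore
\[
\frac1{n^2}\sum_{k\le n\zeta}\frac{Ck^3}{k^2}\le \frac{C}{n^2}(n\zeta)^2=C\zeta^2 .
\]

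For $Y=V$, this gives $\E(M_k^2V_{k-1}^2)\le (Ck^2\cdot Ck^2)^{1/2}=Ck^2$. Therefore
\[
\frac1n\sum_{k\le n\zeta}\frac{Ck^2}{k^2}\le C\zeta .
\]
The same computation carried over the full range $k\le n$, but with normalization $n^{-2}$, gives $n^{-2}\cdot Cn=C/n$, which is \eqref{eq:negligible-V-score}.

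The only point needing care is the finiteness of the fourth moments used in Cauchy--Schwarz. These come from BIP's Corollary~A.4 under $\E(\varepsilon_1^8)<\infty$, and they make each summand integrable so that the orthogonality expansion is legitimate. With those bounds in hand, the remaining work is routine summation with constants uniform in $n$ and $\zeta$.
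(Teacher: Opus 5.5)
Your proposal is correct and follows the paper's proof essentially line by line: the design bounds come directly from $\E(X_k)=\mathrm O(k)$ and $\E(X_k^2)=\mathrm O(k^2)$, and the score bounds come from martingale orthogonality combined with the Cauchy--Schwarz estimates $\E(M_k^2U_{k-1}^2)=\mathrm O(k^3)$ and $\E(M_k^2V_{k-1}^2)=\mathrm O(k^2)$ obtained from BIP's fourth-moment bounds. Your remark that these bounds make the cross terms integrable, so the orthogonality expansion is legitimate, is a small point that the paper leaves implicit.
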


\begin{proof}
The first two estimates follow immediately from \eqref{eq:moment-bounds}. For the score terms, Cauchy--Schwarz and \eqref{eq:moment-bounds} give
\[
\E(M_k^2U_{k-1}^2)=\mathrm O(k^3),
\qquad
\E(M_k^2V_{k-1}^2)=\mathrm O(k^2).
\]
The weighted score summands are martingale differences and hence orthogonal. Therefore
\[
\E\left(\frac1n\sum_{k=1}^{j_n(\zeta)-1}\frac{M_kU_{k-1}}{k}\right)^2
\le
\frac C{n^2}\sum_{k=1}^{j_n(\zeta)-1}k
\le C\zeta^2,
\]
and the same calculation for the $V$ score gives the second estimate in \eqref{eq:initial-segment-score}. Taking the $V$ sum up to $n$ with normalization $n^{-1}$ gives \eqref{eq:negligible-V-score}.
\end{proof}

The next lemma uses \eqref{eq:centered-decomposition} to control the weighted sums needed for the predictable quadratic variation of the intercept score.

\begin{lemma}
\label{lem:weighted-V-k2}
For $r\in\{1,2\}$,
\begin{equation}
\frac1{H_n}\sum_{k=1}^n \frac{X_{k-r}}{k^2}
\Ltwo
\frac{\mu}{1+\beta}.
\label{eq:X-over-k2}
\end{equation}
\end{lemma}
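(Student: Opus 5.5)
We prove $L^2$ convergence by splitting $X_{k-r}$ into its mean and a centered fluctuation via \eqref{eq:centered-decomposition}:
\[
\frac1{H_n}\sum_{k=1}^n \frac{X_{k-r}}{k^2}
=\frac1{H_n}\sum_{k=1}^n \frac{m_{k-r}}{k^2}
+\frac1{H_n}\sum_{k=1}^n \frac{\widetilde U_{k-r}+\beta\widetilde V_{k-r}}{(1+\beta)k^2},
\]
with the convention that $X_{-1}=X_0=0$ so the terms with $k-r\le0$ vanish.

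\emph{Deterministic part.} By \eqref{eq:centered-decomposition}, $m_{k-r}=\mu(k-r)/(1+\beta)+\mathrm O(1)$ uniformly in $k$. Hence
\[
\sum_{k=1}^n \frac{m_{k-r}}{k^2}=\frac{\mu}{1+\beta}\sum_{k=1}^n\frac1k+\mathrm O\Bigl(\sum_k k^{-2}\Bigr)=\frac{\mu}{1+\beta}H_n+\mathrm O(1).
\]
After division by $H_n\to\infty$, this term converges deterministically to $\mu/(1+\beta)$.

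\emph{Fluctuation part.} This is the step requiring care, though it should reduce to a variance computation. Since $\widetilde U_j=\sum_{i\le j}M_i$, interchanging sums gives
\[
\sum_{k=1}^n \frac{\widetilde U_{k-r}}{k^2}=\sum_{i=1}^{n-r} M_i\,c_{i,n},
\qquad
c_{i,n}:=\sum_{k=i+r}^n k^{-2}\le \frac{C}{i}.
\]
The $M_i$ are martingale differences, hence orthogonal, and $\E(M_i^2)=\mathrm O(i)$ by \eqref{eq:moment-bounds}. Therefore
\[
\E\Bigl(\sum_{k} \frac{\widetilde U_{k-r}}{k^2}\Bigr)^2\le C\sum_{i=1}^n \frac{i}{i^2}=\mathrm O(\log n).
\]
Similarly, $\widetilde V_j=\sum_{i\le j}(-\beta)^{j-i}M_i$, so
\[
\sum_k \frac{\widetilde V_{k-r}}{k^2}=\sum_i M_i\,d_{i,n},
\qquad
|d_{i,n}|\le \sum_{k\ge i+r}\beta^{k-r-i}k^{-2}\le C i^{-2}.
\]
This gives a second moment of $\mathrm O\bigl(\sum_i i\cdot i^{-4}\bigr)=\mathrm O(1)$. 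Alternatively, one can bound $\E(\widetilde V_{k}^2)\le C k$ directly and use Minkowski's inequality to obtain $\|\sum_k\widetilde V_{k-r}k^{-2}\|_2\le C\sum_k k^{-3/2}<\infty$.

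Dividing by $H_n^2\sim(\log n)^2$, the second moment of the fluctuation part is $\mathrm O(1/\log n)\to0$. Combined with the deterministic part, this yields \eqref{eq:X-over-k2} for both $r=1$ and $r=2$.

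The only delicate point is obtaining the $\mathrm O(\log n)$ rather than $\mathrm O((\log n)^2)$ bound for the $\widetilde U$ term. A crude Minkowski bound $\sum_k \|\widetilde U_{k}\|_2k^{-2}\le C\sum_k k^{1/2}k^{-2}$ also works, giving $\mathrm O(1)$, so either route suffices.
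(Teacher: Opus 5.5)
Your main argument is correct and is essentially the paper's proof. You subtract $m_{k-r}$ via \eqref{eq:centered-decomposition} and interchange the sums, so that the centered part becomes a weighted sum of the $M_i$ with coefficients $\mathrm O(1/i)$ for $\widetilde U$ and $\mathrm O(1/i^2)$ for $\widetilde V$. Martingale orthogonality with $\E(M_i^2)=\mathrm O(i)$ then gives a second moment of order $H_n$.

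One correction: your closing claim that a crude Minkowski bound also works for the $\widetilde U$ term is false. We have $\E(\widetilde U_k^2)=\sum_{j\le k}\E(M_j^2)$, and $\E(M_j^2)=\alpha\beta(m_{j-1}+m_{j-2})+\Var(\varepsilon_1)$ grows linearly in $j$. Hence $\|\widetilde U_k\|_2\asymp k$, not $k^{1/2}$. Minkowski then gives only $\|\sum_k\widetilde U_{k-r}k^{-2}\|_2\le CH_n$. That is exactly the $\mathrm O((\log n)^2)$ second-moment bound you yourself said must be avoided, and it does not vanish after division by $H_n^2$.

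Your Minkowski alternative for $\widetilde V$ is fine, because the geometric weights keep $\E(\widetilde V_k^2)\le Ck$. So the orthogonality computation after interchanging the sums is genuinely needed for the $\widetilde U$ part, and the remark that ``either route suffices'' should be dropped.
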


\begin{proof}
Fix $r\in\{1,2\}$. By \eqref{eq:centered-decomposition},
\[
\sum_{k=1}^n\frac{X_{k-r}-m_{k-r}}{k^2}
=
\frac1{1+\beta}
\left(
\sum_{k=1}^n\frac{\widetilde U_{k-r}}{k^2}
+\beta\sum_{k=1}^n\frac{\widetilde V_{k-r}}{k^2}
\right).
\]
After interchanging the sums, the coefficients of $M_j$ in the two terms on the right are bounded respectively by
\[
\sum_{k=j+r}^\infty\frac1{k^2}\le\frac Cj,
\qquad
\sum_{k=j+r}^\infty\frac{\beta^{k-r-j}}{k^2}\le\frac C{j^2}.
\]
Orthogonality and \eqref{eq:moment-bounds} therefore give
\[
\E\left(\sum_{k=1}^n\frac{X_{k-r}-m_{k-r}}{k^2}\right)^2
\le CH_n,
\]
so the centered part divided by $H_n$ converges to zero in $L^2$. Finally, \eqref{eq:centered-decomposition} yields
\[
\sum_{k=1}^n\frac{m_{k-r}}{k^2}
=
\frac{\mu}{1+\beta}H_n+\mathrm O(1),
\]
where the finitely many boundary terms are absorbed into $\mathrm O(1)$. This proves \eqref{eq:X-over-k2}.
\end{proof}

The following lemma controls the weighted sums involving $V_{k-1}$: it
provides the probability orders of $\sum k^{-1}V_{k-1}$ and
$\sum k^{-1}X_{k-1}V_{k-1}$ (used for the off-diagonal entries in
Proposition~\ref{prop:joint-convergence-base}), and relates
$\sum k^{-1}V_{k-1}^2$ to $\sum k^{-1}X_{k-1}$ (used for the $(2,2)$
diagonal entry).

\begin{lemma}
\label{lem:weighted-V-sums}
Under the zero-start convention $X_{-1}=X_0=0$, as $n\to\infty$,
\begin{equation}
\sum_{k=1}^n \frac{V_{k-1}}{k}=\Op(H_n)=\Op(\log n),
\qquad
\sum_{k=1}^n \frac{X_{k-1}V_{k-1}}{k}=\Op(n),
\label{eq:XV-bounds}
\end{equation}
and
\begin{equation}
\frac1n\sum_{k=1}^n \frac{V_{k-1}^2}{k}
-
\frac{2\beta}{1+\beta}\frac1n\sum_{k=1}^n \frac{X_{k-1}}{k}
\xrightarrow{L^1}0.
\label{eq:V2-weighted}
\end{equation}
\end{lemma}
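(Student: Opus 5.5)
The plan is to work throughout with the centered decomposition \eqref{eq:centered-decomposition}. Under the unit root, $V_k=-\beta V_{k-1}+\mu+M_k$, so $V_k=\E(V_k)+\widetilde V_k$. Here $\E(V_k)=m_k-m_{k-1}=\frac{\mu}{1+\beta}(1-(-\beta)^k)$ is bounded, and $\widetilde V_k=\sum_{j\le k}(-\beta)^{k-j}M_j$ is a geometrically weighted sum of martingale differences. By orthogonality and \eqref{eq:moment-bounds}, $\E(\widetilde V_k^2)=\sum_j\beta^{2(k-j)}\E(M_j^2)=\mathrm O(k)$.

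\emph{First bound in \eqref{eq:XV-bounds}.} Split $\sum_k k^{-1}V_{k-1}$ into the deterministic part, which is $\mathrm O(H_n)$, and $\sum_k k^{-1}\widetilde V_{k-1}$. In the latter, interchange the sums; the coefficient of $M_j$ is $\sum_{k>j}k^{-1}(-\beta)^{k-1-j}=\mathrm O(j^{-1})$. Its second moment is therefore at most $C\sum_j j^{-2}\E(M_j^2)\le C H_n$, so this part is $\Op(\sqrt{\log n})$.

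\emph{Second bound in \eqref{eq:XV-bounds}.} Cauchy--Schwarz gives $\E|X_{k-1}V_{k-1}|\le (\E X_{k-1}^2\,\E V_{k-1}^2)^{1/2}=\mathrm O(k^{3/2})$. This leads to $\mathrm O(n^{3/2})$, which is too weak. I would therefore use the identity $X_{k-1}V_{k-1}=\tfrac12(X_{k-1}^2-X_{k-2}^2+V_{k-1}^2)$. Summation by parts turns $\sum k^{-1}(X_{k-1}^2-X_{k-2}^2)$ into $n^{-1}X_{n-1}^2+\sum_k X_{k-1}^2/(k(k+1))$, whose expectation is $\mathrm O(n)$. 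The remaining term has $\sum_k k^{-1}\E V_{k-1}^2=\mathrm O(n)$. Hence the whole sum is $\Op(n)$.

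\emph{Relation \eqref{eq:V2-weighted}.} This is the main step. Expand $V_{k-1}^2=\widetilde V_{k-1}^2+2\E(V_{k-1})\widetilde V_{k-1}+\E(V_{k-1})^2$. After dividing by $n$, the last two terms vanish in $L^1$: the middle one has absolute expectation at most $C\sum_k k^{-1}k^{1/2}/n\to0$, and the last is $\mathrm O(H_n/n)$. For the main term write
\[
\widetilde V_k^2=\beta^2\widetilde V_{k-1}^2-2\beta\widetilde V_{k-1}M_k+M_k^2 .
\]
Summing the rearranged form $(1-\beta^2)\widetilde V_{k-1}^2\approx M_{k-1}^2-2\beta\widetilde V_{k-2}M_{k-1}$ against the weights $k^{-1}$ (a telescoping/summation-by-parts step with error $\mathrm O(n^{-1}\sum_k k^{-2}\E\widetilde V_k^2)\to0$ after the $1/n$ normalization) reduces matters to two facts:
\[
\frac1n\sum_k\frac{\widetilde V_{k-2}M_{k-1}}{k}\xrightarrow{L^2}0,
\qquad
\frac1n\sum_k\frac{M_{k-1}^2}{k}-\frac1n\sum_k\frac{\E(M_{k-1}^2\mid\mathcal F_{k-2})}{k}\xrightarrow{L^1}0 .
\]
The first holds because the summands are martingale differences with second moments $\mathrm O(k^2)/k^2$, giving a second moment $\mathrm O(n)/n^2$. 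The second follows from the same orthogonality and $\E M_k^4=\mathrm O(k^2)$, which give $\mathrm O(n)/n^2$.

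Under the unit root, the conditional variance is $\E(M_k^2\mid\mathcal F_{k-1})=\alpha(1-\alpha)X_{k-1}+\beta(1-\beta)X_{k-2}+\Var(\varepsilon_1)$. Since $\alpha=1-\beta$, this equals $\alpha\beta(X_{k-1}+X_{k-2})+\Var(\varepsilon_1)$. Replacing $X_{k-2}$ by $X_{k-1}$ costs $n^{-1}\sum_k k^{-1}\E|V_{k-1}|=\mathrm O(n^{-1/2})$. So $n^{-1}\sum_k k^{-1}V_{k-1}^2$ is asymptotically equal to $\frac{2\alpha\beta}{1-\beta^2}\,n^{-1}\sum_k k^{-1}X_{k-1}$. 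Since $\alpha=1-\beta$, the coefficient is $\frac{2\beta}{1+\beta}$, as claimed.

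The expected obstacle is the bookkeeping of index shifts in the weighted telescoping. The weights $k^{-1}$ change slowly, and it has to be checked that every boundary and summation-by-parts remainder is $o(1)$ in $L^1$ after division by $n$.
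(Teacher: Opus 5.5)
Your proposal is correct. The second bound in \eqref{eq:XV-bounds} is argued exactly as in the paper, but the other two parts take a different route.

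\emph{First bound.} The paper does not center. With $X_{-1}=X_0=0$, summation by parts gives the exact identity $\sum_{k=1}^n V_{k-1}/k=X_{n-1}/n+\sum_{k=1}^{n-1}X_{k-1}/\{k(k+1)\}$. Its right-hand side is nonnegative, so $\E(X_k)=\mathrm O(k)$ and Markov's inequality finish in one line. Your centered argument needs second moments, but it gives the sharper description $\frac{\mu}{1+\beta}H_n+\mathrm O(1)+\Op(\sqrt{H_n})$.

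\emph{Relation \eqref{eq:V2-weighted}.} The paper works with the uncentered $V_k$ and the unweighted partial sums $\mathcal R_m:=\sum_{k=1}^m\{V_{k-1}^2-\frac{2\beta}{1+\beta}X_{k-1}\}$. It adapts BIP's exact identity $(1-\beta^2)\mathcal R_m=\sum_{k=1}^m\Delta_k-V_m^2+c_0m+c_1X_{m-1}$, where $\Delta_k:=V_k^2-\E(V_k^2\mid\mathcal F_{k-1})$ and $c_0,c_1$ are constants. The linear terms in $V_{k-1}$ come from the drift $\mu$ and from writing $X_{k-2}=X_{k-1}-V_{k-1}$ in the conditional variance. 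They telescope exactly into $c_1X_{m-1}$, so the crude bound $\E|\mathcal R_m|\le Cm^{3/2}$ suffices. A single Abel summation then transfers it to the weights $k^{-1}$, with $L^1$ rate $Cn^{-1/2}$.

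You instead remove the mean and cross terms by $L^1$ bounds and run the squared recursion for $\widetilde V$ directly against $k^{-1}$. You then control separately the weighted telescoping remainder ($\mathrm O(H_n/n)$), the two weighted martingale sums ($\mathrm O(n^{-1/2})$ in $L^2$), and the lag replacement. This reaches the same conclusion at essentially the same rate and is self-contained.

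It does carry the index bookkeeping you anticipated. With your shifted recursion, the conditional variance of $M_{k-1}$ involves $X_{k-2}$ and $X_{k-3}$, so both must be moved to $X_{k-1}$, each still at cost $\mathrm O(n^{-1/2})$. Also, $\E(\widetilde V_{k-2}^2M_{k-1}^2)=\mathrm O(k^2)$ should be justified by Cauchy--Schwarz with $\E(V_k^4)=\mathrm O(k^2)$ and $\E(M_k^4)=\mathrm O(k^2)$.

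The paper's version is shorter because the unweighted identity absorbs all index shifts exactly, and the weights enter only in the final summation by parts.
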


\begin{proof}
First, using the telescoping identity noted by BIP
\citep[p.~876]{barczy2014asymptotic}, summation by parts gives
\[
\sum_{k=1}^n\frac{V_{k-1}}k
=\frac{X_{n-1}}n
+\sum_{k=1}^{n-1}\frac{X_{k-1}}{k(k+1)}.
\]
Since $\E(X_k)=\mathrm O(k)$, the expectation of the nonnegative right-hand
side is $\mathrm O(H_n)$. Markov's inequality therefore gives the first
statement in \eqref{eq:XV-bounds}.

Next, using $X_{k-1}V_{k-1}=\frac12(X_{k-1}^2-X_{k-2}^2+V_{k-1}^2)$,
\[
\sum_{k=1}^n \frac{X_{k-1}V_{k-1}}{k}
=
\frac12\sum_{k=1}^n \frac{X_{k-1}^2-X_{k-2}^2}{k}
+\frac12\sum_{k=1}^n \frac{V_{k-1}^2}{k}.
\]
By summation by parts,
\[
\sum_{k=1}^n \frac{X_{k-1}^2-X_{k-2}^2}{k}
=
\frac{X_{n-1}^2}{n}+\sum_{k=1}^{n-1}\frac{X_{k-1}^2}{k(k+1)}.
\]
Since $\E(X_k^2)=\mathrm O(k^2)$, this term is $\Op(n)$. Also $\E(V_k^2)=\mathrm O(k)$ implies $\sum V_{k-1}^2/k=\Op(n)$. Hence the second statement in \eqref{eq:XV-bounds}.

For \eqref{eq:V2-weighted}, put
$\Delta_k:=V_k^2-\E(V_k^2\mid\mathcal F_{k-1})$ and
$\mathcal R_m:=\sum_{k=1}^m(V_{k-1}^2-c_\beta X_{k-1})$.
Adapting BIP's decomposition
\citep[p.~876, equation~(5.3)]{barczy2014asymptotic}, using
$(1-\beta^2)c_\beta=2\alpha\beta$ and $V_0=X_{-1}=0$, gives
\[
 (1-\beta^2)\mathcal R_m
 =\sum_{k=1}^m\Delta_k-V_m^2+c_0m+c_1X_{m-1},
\]
where $c_0=\Var(\varepsilon_1)+\mu^2$ and
$c_1=-(2\beta\mu+\alpha\beta)$.
Martingale orthogonality and BIP's Corollary~A.4 give
$\E(\Delta_k^2)\leq\E(V_k^4)=O(k^2)$ and
$\E(V_m^2)+\E(X_{m-1})=O(m)$, hence $\E|\mathcal R_m|\leq Cm^{3/2}$.
Abel summation therefore yields
\[
 \E\left|\frac1n\sum_{k=1}^n
       \frac{V_{k-1}^2-c_\beta X_{k-1}}k\right|
 \leq \frac{\E|\mathcal R_n|}{n^2}
       +\frac1n\sum_{k=1}^{n-1}\frac{\E|\mathcal R_k|}{k(k+1)}
 \leq Cn^{-1/2}\longrightarrow0,
\]
which proves \eqref{eq:V2-weighted}.
\end{proof}

\section{Joint limit of the weighted design matrix and score vector}

The first two weighted score coordinates are treated by summation by parts on
$[\zeta,1]$ and the process convergence~\eqref{eq:barczy-process-limit}. For
the intercept score $Z_n:=H_n^{-1/2}\sum_{k=1}^n M_k/k$, an initial block of
$o(n)$ observations accounts for asymptotically all of its variance. We apply
a martingale central limit theorem to this block. Removing the families
present at its endpoint leaves an independent process with the same
diffusion limit, which establishes the required asymptotic independence.

For the decompositions below, we use an individual-labelled branching
realization of the INAR(2) process. Let $\mathcal I_k$ be the set of
individuals counted by $X_k$. The immigrants at time $k$ receive labels
$\iota_{k,i}$, $1\leq i\leq\varepsilon_k$. Each individual
$u\in\mathcal I_{k-1}$ receives an independent Bernoulli-$\alpha$ mark for a
child at time $k$, and each $u\in\mathcal I_{k-2}$ receives an independent
Bernoulli-$\beta$ mark for a child at time $k$. A child label records its
parent and birth time, so immigrant and offspring labels are disjoint and
retain their complete genealogies. Thus
\[
 \mathcal I_k
 =\{\iota_{k,i}:1\leq i\leq\varepsilon_k\}
 \mathbin{\dot\cup}
 \{(u,k,\alpha):u\in\mathcal I_{k-1},\ \xi_{k,u}^{(\alpha)}=1\}
 \mathbin{\dot\cup}
 \{(u,k,\beta):u\in\mathcal I_{k-2},\ \xi_{k,u}^{(\beta)}=1\},
\]
and $X_k=|\mathcal I_k|$ has the required INAR(2) law. Initial individuals,
when present, receive distinct pre-sample root labels with root times $-1$ and
$0$, respectively. Let
$\mathcal F_k^{\mathrm{br}}$ be
the filtration generated by the initial labels, the immigrant counts through
time $k$, and all reproduction marks used to form generations through time
$k$. Then $\mathcal F_k\subseteq\mathcal F_k^{\mathrm{br}}$, and $(M_k)$
remains a martingale-difference sequence with respect to
$(\mathcal F_k^{\mathrm{br}})$. Counts may be partitioned pathwise by disjoint sets of root
labels; the resulting component residuals are conditionally centered, and
components with disjoint roots use disjoint reproduction marks.

\Needspace{10\baselineskip}
The blocking argument uses the following survival bound for families
already present at the blocking time.

\begin{lemma}[Survival of old families]
\label{lem:old-family-survival}
Suppose that $\alpha+\beta=1$, with $\alpha,\beta\in(0,1)$.
Let $s\geq0$ be an integer. Conditionally on
$\mathcal F_s^{\mathrm{br}}$, let $D$ be the population
generated after time $s$ by $D_s+D_{s-1}=:K_s$ existing individuals, with
no subsequent immigration. There is a constant $C$ such that, for every integer $r\geq s+3$,
\[
 \PP\{D_k>0\text{ for some }k\geq r\mid\mathcal F_s^{\mathrm{br}}\}
 \leq \frac{C K_s}{r-s}.
\]
In particular, a family started from finitely many individuals becomes
extinct in finite calendar time almost surely.
\end{lemma}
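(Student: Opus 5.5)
The plan is to forget calendar time and read the family tree by generations, where it becomes a critical Galton--Watson process. Under $\alpha+\beta=1$, each individual $u$ born at time $j$ has at most two children: one at time $j+1$ if $\xi^{(\alpha)}_{j+1,u}=1$, and one at time $j+2$ if $\xi^{(\beta)}_{j+2,u}=1$. These marks are independent Bernoulli variables and independent across individuals. So the genealogical tree descending from a single root, counted by generation, is a Galton--Watson tree with offspring law $\xi^{(\alpha)}+\xi^{(\beta)}$. This law is supported on $\{0,1,2\}$, has mean $\alpha+\beta=1$ and variance $\alpha(1-\alpha)+\beta(1-\beta)=2\alpha\beta\in(0,\infty)$. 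It is therefore critical, nondegenerate and has finite variance. Kolmogorov's estimate then gives a constant $C_0$ with
\[
\PP\{Z_g>0\}\le \frac{C_0}{g},\qquad g\geq1,
\]
where $Z_g$ is the size of generation $g$. The asymptotic relation $\PP\{Z_g>0\}\sim 2/(\sigma^2g)$ with $\sigma^2=2\alpha\beta$ yields this for large $g$, and enlarging $C_0$ covers small $g$.

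The next step is to handle the conditioning on $\mathcal F_s^{\mathrm{br}}$ through the label structure. The $K_s=D_s+D_{s-1}$ existing individuals are the roots. A root born at time $s$ has not yet used either of its marks. For a root born at $s-1$, the $\alpha$-mark was already used to form generation $s$, so only its $\beta$-mark for time $s+1$ remains. All marks used after time $s$ are independent of $\mathcal F_s^{\mathrm{br}}$, and disjoint roots use disjoint marks. The descendants of a root at $s-1$ formed after time $s$ are therefore a subset of a fresh Galton--Watson tree. Concretely, I would couple it by adding a fictitious extra $\alpha$-mark, which only enlarges the set of descendants, so its survival probabilities are dominated by those of the tree above. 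A root at $s$ is exactly a fresh tree.

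Next, convert calendar time into generation depth. Every birth occurs one or two time units after the parent's birth. So a descendant born at time $k\geq r$ of a root born at time $s$ or $s-1$ lies at generation at least $(k-s)/2\geq(r-s)/2$. Hence $\{D_k>0 \text{ for some } k\geq r\}$ is contained in the union, over roots, of the events that the root's tree reaches generation $g_r:=\lceil (r-s)/2\rceil$. Since $r\geq s+3$, we have $g_r\geq 2$ and $g_r\geq (r-s)/2$. A union bound with the estimate above gives
\[
\PP\{D_k>0\ \text{for some}\ k\geq r\mid\mathcal F_s^{\mathrm{br}}\}\leq K_s\frac{C_0}{g_r}\le\frac{2C_0K_s}{r-s}.
\]

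The final claim follows because a critical nondegenerate Galton--Watson tree is finite almost surely. A finite union of finite trees has a bounded maximal birth time, so the family becomes extinct in finite calendar time; this is also what the bound gives as $r\to\infty$.

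The main obstacle I expect is making the conditioning rigorous rather than the branching estimate itself. One must check that the unused marks at time $s$ are independent of $\mathcal F_s^{\mathrm{br}}$ as the filtration is defined. One must also justify the monotone coupling for the partially used roots born at $s-1$.
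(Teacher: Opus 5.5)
Your proposal is correct and follows essentially the same route as the paper. Both arguments read the family by genealogical generations as a critical Galton--Watson tree, dominate the partially used roots born at $s-1$ by full trees, convert calendar time $r$ into depth of order $(r-s)/2$, and apply a conditional union bound over the $K_s$ roots. The only difference is that you invoke Kolmogorov's estimate for $\PP\{Z_g>0\}$. The paper instead obtains the explicit bound $q_j\le(1+\alpha\beta j)^{-1}$ from the recursion $q_{j+1}=q_j-\alpha\beta q_j^2$, which follows from the generating function $(1-\alpha+\alpha z)(1-\beta+\beta z)$; this avoids citing the asymptotic theorem.
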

\begin{proof}
An individual has independent Bernoulli-$\alpha$ and Bernoulli-$\beta$
children, born one and two time units later. If birth times are suppressed,
its family is a Galton--Watson tree with offspring generating function
$f(z)=(1-\alpha+\alpha z)(1-\beta+\beta z)$.
Writing $q_j$ for survival to genealogical generation $j$, we have $q_0=1$
and
\[
 q_{j+1}=1-f(1-q_j)=q_j-\alpha\beta q_j^2,
 \qquad
 \frac1{q_{j+1}}\geq\frac1{q_j}+\alpha\beta.
\]
Consequently $q_j\leq(1+\alpha\beta j)^{-1}$.
Cut all family lines at time $s$. The unused reproduction of a root born at
$s$ has the full offspring law above. A root born at $s-1$ has only its
Bernoulli-$\beta$ reproduction remaining, which is dominated by the full law.
Future descendants are therefore covered by $K_s$ such trees.
Since every parent--child edge takes at most two calendar time units,
a birth at or after $r$ requires genealogical depth at least
$\lfloor(r-s)/2\rfloor$ in one of these trees. The conditional union bound
proves the inequality. Letting $r\to\infty$ gives the extinction assertion.
\end{proof}

The next lemma supplies the joint limit needed below.

\begin{lemma}
\label{lem:joint-process-Rn}
In $D([0,1],\R^3)\times\R$,
\begin{equation}
\bigl(\mathcal X^{(n)},\mathcal N^{(n)},\mathcal P^{(n)},Z_n\bigr)
\D
\bigl(\mathcal X,\mathcal N,\mathcal P,Z\bigr),
\label{eq:joint-process-Rn}
\end{equation}
where $Z\sim\mathcal N(0,\sigma_\mu^2)$. In the Brownian representation of
the BIP limit, $Z$ is independent of $(\mathcal X,W,\widetilde W)$.
\end{lemma}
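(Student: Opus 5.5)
The plan is a blocking argument, as announced at the start of this section. Fix a block endpoint $s_n$ with $s_n\to\infty$ and $\log s_n/\log n\to1$ but $s_n=o(n)$, for instance $s_n:=\lfloor n/\log n\rfloor$. Split the intercept score into an early part and a late part:
\[
Z_n=H_n^{-1/2}\sum_{k=1}^{s_n}\frac{M_k}{k}+H_n^{-1/2}\sum_{k=s_n+1}^{n}\frac{M_k}{k}=:Z_n^{(1)}+Z_n^{(2)}.
\]
For the late part, orthogonality and $\E(M_k^2)=\mathrm O(k)$ give $\E(Z_n^{(2)})^2\le CH_n^{-1}\sum_{k>s_n}k^{-1}\le CH_n^{-1}\log(n/s_n)\to0$, because $\log(n/s_n)=\log\log n$. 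So it suffices to prove the statement with $Z_n^{(1)}$ in place of $Z_n$.

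For the Gaussian limit of $Z_n^{(1)}$ I would apply a martingale CLT (for example Hall--Heyde, Corollary 3.1) to the array $H_n^{-1/2}M_k/k$, $k\le s_n$. Under the unit root the conditional variance is
\[
\E(M_k^2\mid\mathcal F_{k-1})=\alpha(1-\alpha)X_{k-1}+\beta(1-\beta)X_{k-2}+\Var(\varepsilon_1)=\alpha\beta(X_{k-1}+X_{k-2})+\Var(\varepsilon_1).
\]
Lemma~\ref{lem:weighted-V-k2}, applied with $s_n$ in place of $n$ and using $H_{s_n}/H_n\to1$, then gives
\[
H_n^{-1}\sum_{k\le s_n}k^{-2}\E(M_k^2\mid\mathcal F_{k-1})\Pto\frac{2\alpha\beta\mu}{1+\beta}=\sigma_\mu^2,
\]
since the innovation-variance term contributes only $\mathrm O(1)/H_n$. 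For the conditional Lyapunov condition, $\E(M_k^4)=\mathrm O(k^2)$ gives $H_n^{-2}\sum_k k^{-4}\E M_k^4\le CH_n^{-2}\to0$.

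\textbf{Asymptotic independence (main obstacle).} $Z_n^{(1)}$ is $\mathcal F_{s_n}^{\mathrm{br}}$-measurable, and the scaled process $(\mathcal X^{(n)},\mathcal N^{(n)},\mathcal P^{(n)})$ is essentially determined by what happens after time $s_n$. Using the labelled branching construction, I would decompose $X_k=X_k^{\mathrm{old}}+X_k^{\mathrm{new}}$ for $k>s_n$. Here $X^{\mathrm{old}}$ consists of descendants of individuals in $\mathcal I_{s_n}\cup\mathcal I_{s_n-1}$, and $X^{\mathrm{new}}$ consists of descendants of immigrants arriving after $s_n$. The new process has the law of the zero-start INAR(2) process shifted in time by $s_n$, and it is independent of $\mathcal F_{s_n}^{\mathrm{br}}$ because it uses disjoint reproduction marks and later immigrants. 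The residuals split accordingly, $M_k=M_k^{\mathrm{old}}+M_k^{\mathrm{new}}$.

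The old part should be negligible. Since $\E K_{s_n}=\mathrm O(s_n)$, Lemma~\ref{lem:old-family-survival} gives
\[
\PP\{X^{\mathrm{old}}_k>0\text{ for some }k\ge s_n+r_n\}\le Cs_n/r_n .
\]
Choosing $r_n$ with $s_n\ll r_n\ll n$, for example $r_n:=n/\sqrt{\log n}$, the old families die out before time $s_n+r_n=o(n)$ with probability tending to one. Before they die, their size is bounded in $L^1$ by $C(s_n+r_n)=o(n)$, so their contribution to $\sup_t|\mathcal X^{(n)}_t|$ is $\op(1)$. Their contributions to $\mathcal N^{(n)}$ and $\mathcal P^{(n)}$ on $[0,(s_n+r_n)/n]$ are also $\op(1)$, either by the estimates in Lemma~\ref{lem:initial-segment-bounds} or by the analogous second-moment bounds with $\zeta=(s_n+r_n)/n\to0$. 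The cross terms, such as $M_k^{\mathrm{new}}U_{k-1}^{\mathrm{old}}$, are controlled the same way. The same initial-segment bounds, together with continuity of the limit at $0$, also show that the shift of the new process by $s_n/n\to0$ is negligible in the Skorokhod topology.

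Hence $(\mathcal X^{(n)},\mathcal N^{(n)},\mathcal P^{(n)})$ equals a functional $\Gamma_n$ of the new process plus $\op(1)$. By \eqref{eq:barczy-process-limit} applied to the new process, $\Gamma_n\D(\mathcal X,\mathcal N,\mathcal P)$, and $\Gamma_n$ is independent of $Z_n^{(1)}$. Independence of the two components, together with their marginal limits, yields joint convergence to a product law. Slutsky then gives \eqref{eq:joint-process-Rn}, with $Z$ independent of $(\mathcal X,\mathcal N,\mathcal P)$. Since $(W,\widetilde W)$ may be realized as measurable functionals of $(\mathcal X,\mathcal N,\mathcal P)$ (from $\mathcal N$ and $\mathcal P$ on $\{\mathcal X>0\}$), possibly after enlarging the probability space by an independent Brownian motion, $Z$ can be taken independent of $(\mathcal X,W,\widetilde W)$.

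The step I expect to require the most care is bounding the old-family contributions to the score processes $\mathcal N^{(n)}$ and $\mathcal P^{(n)}$, including the cross terms, uniformly in $t$.
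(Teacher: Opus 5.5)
Your proposal is correct and is essentially the paper's proof: the same martingale CLT for the initial block $H_n^{-1/2}\sum_{k\le s_n}M_k/k$ (via Lemma~\ref{lem:weighted-V-k2} and fourth moments), and the same split into post-$s_n$ immigrant families versus old families. The post-$s_n$ families are independent of $\mathcal F^{\mathrm{br}}_{s_n}$ and distributed as a delayed zero-start process, while the old families are killed by Lemma~\ref{lem:old-family-survival} before a time $o(n)$; your choices of $s_n$ and $r_n$ differ from the paper's only cosmetically. For the uniform-in-$t$ step you flag, a pointwise $L^1$ bound on the old population is not enough by itself, and the paper sidesteps the old/new cross terms: it bounds the full and the new processes separately on that initial segment with Doob's maximal inequality ($\E\max_{j\le r}X_j^2\le Cr^2$, and $Cr^4$, $Cr^3$ for the two score partial sums), since on the survival event their increments coincide afterwards.
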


\begin{proof}
For sufficiently large $n$, set
\[
 s_n=\left\lfloor\frac{n}{(\log n)^2}\right\rfloor,
 \qquad r_n=\left\lfloor\frac{n}{\log n}\right\rfloor,
 \qquad Z_n^*=H_n^{-1/2}\sum_{k=1}^{s_n}\frac{M_k}{k}.
\]
Then $H_{s_n}/H_n\to1$. Martingale orthogonality and
\eqref{eq:moment-bounds} give
\[
 \E|Z_n-Z_n^*|^2
 \leq\frac C{H_n}\sum_{k=s_n+1}^n\frac1k
 =O\!\left(\frac{\log\log n}{\log n}\right)=o(1).
\]
Lemma~\ref{lem:weighted-V-k2}, applied with sample size $s_n$, together
with $H_{s_n}/H_n\to1$, gives, for $r=1,2$,
\[
 \frac1{H_n}\sum_{k=1}^{s_n}\frac{X_{k-r}}{k^2}
 =\frac{H_{s_n}}{H_n}
  \left\{\frac1{H_{s_n}}\sum_{k=1}^{s_n}\frac{X_{k-r}}{k^2}\right\}
 \xrightarrow{L^2}\frac\mu{1+\beta}.
\]
Also,
\[
 \frac{\Var(\varepsilon_1)}{H_n}\sum_{k=1}^{s_n}k^{-2}
 =O(H_n^{-1})\longrightarrow0.
\]
The conditional variance identity
$\E(M_k^2\mid\mathcal F_{k-1}^{\mathrm{br}})
=\alpha\beta(X_{k-1}+X_{k-2})+\Var(\varepsilon_1)$
therefore yields
\[
 \frac1{H_n}\sum_{k=1}^{s_n}
 \frac{\E(M_k^2\mid\mathcal F_{k-1}^{\mathrm{br}})}{k^2}
 \Pto\frac{2\alpha\beta\mu}{1+\beta}=\sigma_\mu^2.
\]
For every $a>0$, the expectation of the conditional Lindeberg sum is at most
\[
 \frac1{a^2H_n^2}\sum_{k=1}^{s_n}\frac{\E(M_k^4)}{k^4}
 \leq \frac C{a^2H_n^2}\sum_{k=1}^\infty k^{-2}\longrightarrow0.
\]
Thus the martingale CLT
\citep[Corollary~3.1, pp.~58--59]{hall1980martingale} gives
$Z_n^*\D\mathcal N(0,\sigma_\mu^2)$.

Let $\overline X_k$ consist of the immigrants arriving after $s_n$ and their
descendants in the labelled construction, and write $D_k=X_k-\overline X_k$.
The entire barred process is independent of $\mathcal F_{s_n}^{\mathrm{br}}$.
Set $\overline X_k=0$ for $k\leq s_n$ and define
\[
\begin{aligned}
 \overline U_k&=\overline X_k+\beta\overline X_{k-1},
 &\overline V_k&=\overline X_k-\overline X_{k-1},\\
 \overline M_k&=\overline X_k-\alpha\overline X_{k-1}
       -\beta\overline X_{k-2}-\mu\mathbf1_{\{k>s_n\}}.
\end{aligned}
\]
Define $\overline{\mathcal X}^{(n)},\overline{\mathcal N}^{(n)},
\overline{\mathcal P}^{(n)}$ from these variables with the original
normalizations.

Since $\E(X_{s_n}+X_{s_n-1})=O(s_n+1)$, the survival lemma gives
\[
 \PP(E_n^c)\leq C\frac{s_n+1}{r_n-s_n-2}\longrightarrow0,
 \qquad E_n:=\{D_k=0\text{ for every }k\geq r_n-2\}.
\]
On $E_n$ the two population paths agree after $r_n-2$, and the increments
of both score processes agree for $k\geq r_n$. Their cumulative scores can
still differ by the constant accumulated before $r_n$.

The process $U_k$ is a nonnegative submartingale. Doob's inequality and
\eqref{eq:moment-bounds}, together with martingale orthogonality, yield
\[
\begin{aligned}
 \E\max_{j\leq r}X_j^2&\leq Cr^2,\\
 \E\max_{j\leq r}\left|\sum_{k=1}^j M_kU_{k-1}\right|^2&\leq Cr^4,\\
 \E\max_{j\leq r}\left|\sum_{k=1}^j M_kV_{k-1}\right|^2&\leq Cr^3.
\end{aligned}
\]
For the last two bounds use
$\E(M_k^2U_{k-1}^2)\leq Ck^3$ and
$\E(M_k^2V_{k-1}^2)\leq Ck^2$.
The same bounds hold for the barred process: after $s_n$ it has the law of
a fresh zero-start process. With
$T_n=(\mathcal X^{(n)},\mathcal N^{(n)},\mathcal P^{(n)})$ and
$\overline T_n$ defined analogously, Markov's inequality therefore gives,
for every $a>0$,
\[
 \PP(\|T_n-\overline T_n\|_\infty>a)
 \leq\PP(E_n^c)+C_a\left\{
 (r_n/n)^2+(r_n/n)^4+(r_n/n)^3\right\}\longrightarrow0.
\]
Here any fixed norm on $\mathbb R^3$ may be used.

The BIP process limit \eqref{eq:barczy-process-limit} and $s_n/n\to0$
give $\overline T_n\D(\mathcal X,\mathcal N,\mathcal P)$: its law is that
of a fresh zero-start process delayed by $s_n/n$, and the limit is continuous.
For each $n$, $\overline T_n$ and $Z_n^*$ are independent. Product convergence,
the uniform approximation above, and $Z_n-Z_n^*\Pto0$ now prove
\eqref{eq:joint-process-Rn}. Realize the BIP Brownian representation on a
product space with an independent $Z$; this supplies the asserted
independence from $(\mathcal X,W,\widetilde W)$.
\end{proof}

The process coordinates of the limit in \eqref{eq:joint-process-Rn} have continuous paths. By the Skorokhod representation theorem, we may realize this joint convergence so that it holds almost surely in $D([0,1],\R^3)\times\R$. For the process coordinates, almost sure $J_1$ convergence is then equivalent to uniform convergence on $[0,1]$. Hence the fixed-$\zeta$ maps used below are almost surely continuous.

\Needspace{12\baselineskip}
The next proposition obtains the design and score limits jointly.

\begin{proposition}
\label{prop:joint-convergence-base}
Under the reference conditions $X_{-1}=X_0=0$ and $\ell=0$, let
\[
\mathsf{D}_n:=\diag\bigl(n^{-1},n^{-1/2},H_n^{-1/2}\bigr),
\qquad
\widetilde A_n^{(w)}:=\mathsf{D}_nA_n^{(w)}\mathsf{D}_n,
\qquad
\widetilde d_n^{(w)}:=\mathsf{D}_nd_n^{(w)}.
\]
Define
\[
\begin{aligned}
\widetilde A^{(w)}
&:=
\begin{pmatrix}
I_2 & 0 & 0\\
0 & \frac{2\beta}{1+\beta}I_1 & 0\\
0 & 0 & 1
\end{pmatrix},
&
\widetilde d^{(w)}
&:=
\begin{pmatrix}
\sqrt{2\alpha\beta}\int_0^1 t^{-1}\mathcal X_t^{3/2}\,dW_t\\[2mm]
-2\beta\sqrt{\frac{\alpha}{1+\beta}}\int_0^1 t^{-1}\mathcal X_t\,d\widetilde W_t\\[2mm]
Z
\end{pmatrix}.
\end{aligned}
\]
where $Z\sim\mathcal N(0,\sigma_\mu^2)$ is independent of
$(\mathcal X,W,\widetilde W)$. Then
\begin{equation}
\bigl(\widetilde A_n^{(w)},\widetilde d_n^{(w)}\bigr)
\D
\bigl(\widetilde A^{(w)},\widetilde d^{(w)}\bigr).
\label{eq:joint-A-d-limit-base}
\end{equation}
Moreover, $\det\widetilde A^{(w)}>0$ almost surely.
\end{proposition}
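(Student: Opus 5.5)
The plan is to write every entry of $\widetilde A_n^{(w)}$ and every coordinate of $\widetilde d_n^{(w)}$ as either a fixed-$\zeta$ functional of $(\mathcal X^{(n)},\mathcal N^{(n)},\mathcal P^{(n)},Z_n)$ plus a remainder that is small uniformly in $n$ as $\zeta\downarrow0$, or a term that is negligible outright. We then invoke Lemma~\ref{lem:joint-process-Rn}, realized almost surely via Skorokhod, and close with the standard approximation argument (Billingsley, Theorem~3.2): limit in $n$ for fixed $\zeta$, then $\zeta\downarrow0$.

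\emph{Design entries.} For the $(1,1)$ entry we write
\[
n^{-2}\sum_{k=1}^n k^{-1}X_{k-1}^2=\int_{\zeta}^1 t^{-1}(\mathcal X^{(n)}_t)^2\,dt+\text{(initial segment)}+o(1)
\]
for $k\ge j_n(\zeta)$, since $k/n$ and $t$ agree up to $1/n$ on $[\zeta,1]$. The initial segment is $\le C\zeta^2$ in $L^1$ by \eqref{eq:initial-segment-design}, and the tail of $I_2$ below $\zeta$ vanishes by the remark after Theorem~\ref{thm:main}. The $(2,2)$ entry $n^{-1}\sum k^{-1}V_{k-1}^2$ reduces by \eqref{eq:V2-weighted} to $c_\beta n^{-1}\sum k^{-1}X_{k-1}$, which is handled in the same way and converges to $c_\beta I_1$. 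The $(3,3)$ entry is $H_n/H_n=1$. Each off-diagonal entry carries an extra vanishing factor:
\begin{itemize}
\item $(1,2)$: $n^{-3/2}\sum k^{-1}X_{k-1}V_{k-1}=\Op(n^{-1/2})$ by \eqref{eq:XV-bounds};
\item $(1,3)$: $n^{-1}H_n^{-1/2}\sum k^{-1}X_{k-1}=\Op(H_n^{-1/2})$, since its expectation is $O(1)\cdot n$;
\item $(2,3)$: $n^{-1/2}H_n^{-1/2}\sum k^{-1}V_{k-1}=\Op(\sqrt{\log n/n})$ by \eqref{eq:XV-bounds}.
\end{itemize}

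\emph{Score entries.} Since $X_{k-1}=(U_{k-1}+\beta V_{k-1})/(1+\beta)$, the first coordinate is
\[
n^{-1}\sum k^{-1}M_kX_{k-1}=(1+\beta)^{-1}\Bigl\{n^{-1}\sum k^{-1}M_kU_{k-1}+\beta\, n^{-1}\sum k^{-1}M_kV_{k-1}\Bigr\}.
\]
The second piece is $O_{L^2}(n^{-1/2})$ by \eqref{eq:negligible-V-score}. The first piece, over $k\ge j_n(\zeta)$, equals the Stieltjes sum $\sum (n/k)\,\Delta\mathcal N^{(n)}_{k/n}$. Summation by parts turns it into
\[
\mathcal N^{(n)}_1-\zeta^{-1}\mathcal N^{(n)}_\zeta+\int_\zeta^1 t^{-2}\mathcal N^{(n)}_t\,dt+o(1),
\]
a continuous functional of $\mathcal N^{(n)}$. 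Its limit equals $\int_\zeta^1 t^{-1}d\mathcal N_t$ by the same identity, applied pathwise after Itô integration by parts with the deterministic integrand $t^{-1}$. The initial segment is bounded by $C\zeta^2$ in $L^2$ by \eqref{eq:initial-segment-score}. Letting $\zeta\downarrow0$ and dividing by $(1+\beta)$ gives $\sqrt{2\alpha\beta}\int_0^1t^{-1}\mathcal X_t^{3/2}dW_t$; the limit exists in $L^2$ because $\E\int_0^1 t^{-2}\mathcal X_t^3dt<\infty$. The second coordinate, $-n^{-1/2}\sum k^{-1}M_kV_{k-1}$, is treated identically with $\mathcal P^{(n)}$, using the $C\zeta$ initial-segment bound. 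The third coordinate is exactly $Z_n$. Joint convergence of all entries, together with the independence of $Z$, is then inherited from Lemma~\ref{lem:joint-process-Rn}, because for fixed $\zeta$ everything is an a.s.\ continuous map of the same quadruple.

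\emph{Main obstacle and nondegeneracy.} I expect the main obstacle to be the $\zeta\downarrow0$ uniformity for the stochastic integrals near the origin, where the weight $t^{-1}$ blows up. It has to be controlled both in the prelimit, via \eqref{eq:initial-segment-score}, and in the limit, where we also need the boundary term $\zeta^{-1}\mathcal N_\zeta\to0$ in probability. For the latter we use $\E\mathcal N_\zeta^2=C\int_0^\zeta \E\mathcal X_s^3ds=O(\zeta^4)$. Finally, $\det\widetilde A^{(w)}=c_\beta I_1I_2$. This is positive a.s. because $\mathcal X$ is a nonnegative continuous process, not identically zero on any interval $(0,\epsilon)$: the squared-Bessel process with $\delta>0$ spends zero Lebesgue time at $0$. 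Hence $I_1>0$ and $I_2>0$ a.s.
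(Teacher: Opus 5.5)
Your proposal is correct and follows essentially the same route as the paper. Both arguments use fixed-$\zeta$ truncation, with summation by parts on $\mathcal N^{(n)}$ and $\mathcal P^{(n)}$ (after removing the $V$-part of the $X$-score via \eqref{eq:negligible-V-score}), the initial-segment bounds of Lemma~\ref{lem:initial-segment-bounds}, Billingsley's converging-together theorem, \eqref{eq:V2-weighted} for the $(2,2)$ entry and \eqref{eq:XV-bounds} for the off-diagonal entries. Your explicit arguments for $I_1,I_2>0$ almost surely and for $\zeta^{-1}\mathcal N_\zeta\to0$ fill in details the paper leaves implicit; the second is not actually needed once the limit is written as $\int_\zeta^1 t^{-1}\,d\mathcal N_t$ and It\^o isometry is applied.
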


\begin{proof}
The joint process convergence \eqref{eq:joint-process-Rn}, including the required independence of $Z$, follows from Lemma~\ref{lem:joint-process-Rn}.

Fix $\zeta\in(0,1)$ and let $j_n(\zeta):=\lfloor n\zeta\rfloor+1$. Set
{\small
\[
\begin{aligned}
T_n
&:=\biggl(\frac1n\sum_{k=1}^n\frac{X_{k-1}}k,
\ \frac1{n^2}\sum_{k=1}^n\frac{X_{k-1}^2}k,
\ \frac1n\sum_{k=1}^n\frac{M_kX_{k-1}}k,
-\frac1{\sqrt n}\sum_{k=1}^n\frac{M_kV_{k-1}}k,
 \ Z_n\biggr)^{\mathsf T},\\[1mm]
T_{n,\zeta}
&:=\biggl(\frac1n\sum_{k=j_n(\zeta)}^n\frac{X_{k-1}}k,
\ \frac1{n^2}\sum_{k=j_n(\zeta)}^n\frac{X_{k-1}^2}k,
\ \frac1{(1+\beta)n}\sum_{k=j_n(\zeta)}^n\frac{M_kU_{k-1}}k,
-\frac1{\sqrt n}\sum_{k=j_n(\zeta)}^n\frac{M_kV_{k-1}}k,
 \ Z_n\biggr)^{\mathsf T}.
\end{aligned}
\]
}
The first two coordinates are weighted Riemann sums. For the score
coordinates, summation by parts gives
\[
\begin{aligned}
\frac1n\sum_{k=j_n(\zeta)}^n\frac{M_kU_{k-1}}k
&=\mathcal N_1^{(n)}
-\frac{n}{j_n(\zeta)}\mathcal N_{(j_n(\zeta)-1)/n}^{(n)}
+\sum_{k=j_n(\zeta)}^{n-1}\frac{n}{k(k+1)}\mathcal N_{k/n}^{(n)},\\
\frac1{\sqrt n}\sum_{k=j_n(\zeta)}^n\frac{M_kV_{k-1}}k
&=\mathcal P_1^{(n)}
-\frac{n}{j_n(\zeta)}\mathcal P_{(j_n(\zeta)-1)/n}^{(n)}
+\sum_{k=j_n(\zeta)}^{n-1}\frac{n}{k(k+1)}\mathcal P_{k/n}^{(n)}.
\end{aligned}
\]
Consequently, the continuous mapping theorem applied to \eqref{eq:joint-process-Rn} yields, for every fixed $\zeta>0$,
{\small
\[
\begin{aligned}
T_{n,\zeta}&\D T_\zeta,\\
T_\zeta
&:=\biggl(\int_\zeta^1\frac{\mathcal X_t}{t}\,dt,
\ \int_\zeta^1\frac{\mathcal X_t^2}{t}\,dt,
\ \sqrt{2\alpha\beta}\int_\zeta^1\frac{\mathcal X_t^{3/2}}t\,dW_t,
-2\beta\sqrt{\frac{\alpha}{1+\beta}}
\int_\zeta^1\frac{\mathcal X_t}t\,d\widetilde W_t,
 \ Z\biggr)^{\mathsf T},\\[1mm]
T
&:=\biggl(I_1,\ I_2,
\ \sqrt{2\alpha\beta}\int_0^1\frac{\mathcal X_t^{3/2}}t\,dW_t,
-2\beta\sqrt{\frac{\alpha}{1+\beta}}
\int_0^1\frac{\mathcal X_t}t\,d\widetilde W_t,
\ Z\biggr)^{\mathsf T}.
\end{aligned}
\]
}

Since $U_{k-1}=(1+\beta)X_{k-1}-\beta V_{k-1}$, the difference between the third coordinates of $T_n$ and $T_{n,\zeta}$ is
\[
\frac1{(1+\beta)n}\sum_{k=1}^{j_n(\zeta)-1}\frac{M_kU_{k-1}}k
+
\frac{\beta}{(1+\beta)n}\sum_{k=1}^n\frac{M_kV_{k-1}}k.
\]
The four initial-segment estimates in
\eqref{eq:initial-segment-design}--\eqref{eq:initial-segment-score}, together
with \eqref{eq:negligible-V-score}, therefore imply that, for every
$\varepsilon>0$,
\[
\lim_{\zeta\downarrow0}\limsup_{n\to\infty}
\PP\bigl(\lVert T_n-T_{n,\zeta}\rVert>\varepsilon\bigr)=0.
\]
As $\zeta\downarrow0$, monotone convergence for the first two coordinates and
It\^o isometry for the next two give $T_\zeta\to T$ in probability.
The fixed-$\zeta$ convergence $T_{n,\zeta}\D T_\zeta$, the preceding approximation, and $T_\zeta\to T$ satisfy the hypotheses of the converging-together theorem \citep[Theorem~3.2]{billingsley1999convergence}. Applying that theorem to the full stacked vector gives $T_n\D T$.

It remains only to assemble the entries of the design matrix. The $(1,1)$ entry is the second coordinate of $T_n$, while \eqref{eq:V2-weighted} gives
\[
\frac1n\sum_{k=1}^n\frac{V_{k-1}^2}{k}
=
\frac{2\beta}{1+\beta}\frac1n\sum_{k=1}^n\frac{X_{k-1}}k+\op(1).
\]
The $(3,3)$ entry equals one. The three off-diagonal entries converge to zero because
\[
\begin{aligned}
n^{-3/2}\sum_{k=1}^n\frac{X_{k-1}V_{k-1}}k
  &=\Op(n^{-1/2}),\\
n^{-1}H_n^{-1/2}\sum_{k=1}^n\frac{X_{k-1}}k
  &=\Op(H_n^{-1/2}),\\
n^{-1/2}H_n^{-1/2}\sum_{k=1}^n\frac{V_{k-1}}k
  &=\Op\!\left(\sqrt{\frac{H_n}{n}}\right),
\end{aligned}
\]
which are all $\op(1)$, by \eqref{eq:XV-bounds} and $T_n\D T$. The last
three coordinates of $T_n$ are exactly $\widetilde d_n^{(w)}$. The continuous
mapping theorem together with Slutsky's lemma proves
\eqref{eq:joint-A-d-limit-base}.

Finally,
$\det\widetilde A^{(w)}=2\beta I_1I_2/(1+\beta)>0$ almost surely.
\end{proof}

\begin{proposition}[Transfer to fixed initial conditions and fixed offsets]
\label{prop:joint-convergence}
For an arbitrary fixed initial condition $(X_{-1},X_0)\in\mathbb Z_+^2$ and every
fixed $\ell\geq0$, let
\[
\mathsf{D}_{n,\ell}:=\diag\bigl(n^{-1},n^{-1/2},H_{n,\ell}^{-1/2}\bigr),
\quad
\widetilde A_{n,\ell}^{(w)}
:=\mathsf{D}_{n,\ell}A_{n,\ell}^{(w)}\mathsf{D}_{n,\ell},
\quad
\widetilde d_{n,\ell}^{(w)}:=\mathsf{D}_{n,\ell}d_{n,\ell}^{(w)}.
\]
Then
\[
\bigl(\widetilde A_{n,\ell}^{(w)},
\widetilde d_{n,\ell}^{(w)}\bigr)
\D
\bigl(\widetilde A^{(w)},\widetilde d^{(w)}\bigr),
\]
where the limit is the one in
Proposition~\ref{prop:joint-convergence-base}. Moreover,
$\PP\{\det A_{n,\ell}^{(w)}=0\}\to0$.
\end{proposition}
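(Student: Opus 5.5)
The transfer splits into two independent reductions: first change the offset from $0$ to a fixed $\ell$ with the zero start kept, then change the initial condition with the offset $\ell$ kept. In each step the new normalized design and score differ from the reference ones by $\op(1)$, so Slutsky's lemma together with Proposition~\ref{prop:joint-convergence-base} gives the stated limit.

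\emph{Offset.} Write $(k+\ell)^{-1}=k^{-1}-\ell/\{k(k+\ell)\}$. The correction weights $\ell/\{k(k+\ell)\}\le\ell k^{-2}$ are summable, and we handle each entry in turn.
\begin{itemize}
\item Design entries: the correction to $n^{-2}\sum X_{k-1}^2$ has expectation $\mathrm O(n^{-2}\sum_k k^2\cdot k^{-2})=\mathrm O(n^{-1})$. The same computation bounds the corrections to the $X$, $V^2$ and $XV$ entries by $\mathrm O(n^{-1})$ or smaller, using \eqref{eq:moment-bounds}.
\item Score entries: martingale orthogonality gives second moments of order $n^{-2}\sum k^3k^{-4}$ and $n^{-1}\sum k^2k^{-4}$ for the two autoregressive coordinates.
\item Intercept score: the correction has second moment $H_{n,\ell}^{-1}\sum k\cdot k^{-4}=\mathrm O(1/\log n)$.
\end{itemize}
Since $H_{n,\ell}/H_n\to1$, replacing $H_n$ by $H_{n,\ell}$ in the normalization only multiplies by factors tending to one. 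Hence $(\widetilde A_{n,\ell}^{(w)},\widetilde d_{n,\ell}^{(w)})$ and $(\widetilde A_n^{(w)},\widetilde d_n^{(w)})$ differ by $\op(1)$ under the zero start.

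\emph{Initial condition.} I would use the labelled branching construction. Write $X_k=\overline X_k+D_k$, where $\overline X$ is the zero-start process built from all immigrants, and $D$ consists of the descendants of the $K=X_{-1}+X_0$ initial individuals. These two parts use disjoint reproduction marks and are independent. By Lemma~\ref{lem:old-family-survival}, $D$ dies out in a finite random time $\tau$ almost surely, so $\PP(\tau>r)\le CK/r$. Decompose the residual as $M_k=\overline M_k+M_k^D$; the component $M_k^D$ vanishes for $k>\tau+2$. On $\{\tau\le r\}$ with $r$ fixed, every sum defining the design and score differs from its barred version only through the first $r+2$ terms and through finitely many cross terms. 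Each such term is $\Op(1)$: for example $D_{k-1}\overline X_{k-1}$ and $M_k^D\overline X_{k-1}$ with $k\le r+2$ are $\Op(1)$. Every normalization tends to zero, since it is $n^{-2}$, $n^{-1}$, $n^{-1/2}$ or $H_{n,\ell}^{-1/2}$ for the respective entries. Hence the differences are $\op(1)$ on $\{\tau\le r\}$, and we then let $r\to\infty$. The barred process has the zero-start law, so the previous step applies to it.

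\emph{Invertibility.} On $\{\det A_{n,\ell}^{(w)}\ne0\}$ we have $\det\widetilde A_{n,\ell}^{(w)}=\det A_{n,\ell}^{(w)}\cdot n^{-3}H_{n,\ell}^{-1}$. The limit $\widetilde A^{(w)}$ has positive determinant almost surely, so the portmanteau theorem gives $\PP(\det\widetilde A_{n,\ell}^{(w)}\le\epsilon)\to\PP(\det\widetilde A^{(w)}\le\epsilon)$ for continuity points $\epsilon$. The right side tends to zero as $\epsilon\downarrow0$, which yields $\PP\{\det A_{n,\ell}^{(w)}=0\}\to0$.

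\emph{Main obstacle.} The delicate step is the intercept-score coordinate under a changed initial condition. Its normalization is only $(\log n)^{-1/2}$, and early terms carry weight of order one. I would check that the extra contribution $\sum_{k\le\tau+2}(k+\ell)^{-1}M_k^D$ is a finite random sum, hence $\Op(1)$, and therefore negligible after division by $\sqrt{H_{n,\ell}}$. I would also verify that the conditional-variance identity for $\overline M_k$ carries over to the barred filtration, so that the joint limit keeps $Z$ independent of $(\mathcal X,W,\widetilde W)$.
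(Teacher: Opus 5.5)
Your proposal is correct and follows essentially the paper's route. Both arguments use the weight decomposition $(k+\ell)^{-1}=k^{-1}-\ell/\{k(k+\ell)\}$ with \eqref{eq:moment-bounds} and martingale orthogonality for the offset, the labelled-branching coupling with Lemma~\ref{lem:old-family-survival} for the initial condition, and the portmanteau theorem for invertibility. The differences are cosmetic:
\begin{itemize}
\item You change the offset before the initial condition. The paper does the reverse and then notes that the offset perturbations transfer to fixed starts as finite sums.
\item The $V^2$ and $X$ design corrections pick up a logarithmic factor rather than being $\mathrm O(n^{-1})$, which is still negligible.
\item Your concern about the barred filtration is unnecessary. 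The barred design and score have exactly the zero-start law, so Proposition~\ref{prop:joint-convergence-base} applies to them directly, including the independence of $Z$.
\end{itemize}
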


\needspace{4\baselineskip}
\begin{proof}
We first transfer the zero-start result at $\ell=0$. Couple the process
from a fixed initial pair with a zero-start process by sharing all immigrant
families and their reproduction marks. Their difference consists of the
descendants of the finitely many initial individuals. By
Lemma~\ref{lem:old-family-survival}, these descendants vanish after an almost
surely finite time $T$. The two count paths, regressors, and true residuals
therefore agree for all $k\geq T+2$.

Let superscripts $x$ and $0$ denote the fixed-start and zero-start systems.
For every fixed $\ell$, each entry of
$A_{n,\ell}^{(x)}-A_{n,\ell}^{(0)}$ and
$d_{n,\ell}^{(x)}-d_{n,\ell}^{(0)}$ is eventually a finite random constant,
hence $\Op(1)$. The corresponding normalized differences tend to zero
almost surely. In particular, this transfers
Proposition~\ref{prop:joint-convergence-base}, including the independence
of $Z$, to every fixed initial condition at $\ell=0$. Differences of the
auxiliary weighted sums are also finite sums, so their probability orders
are preserved.

It remains to transfer from offset zero to a fixed $\ell$. The weight difference satisfies
\[
\frac1{k+\ell}-\frac1k
=-\frac{\ell}{k(k+\ell)},
\qquad \frac{\ell}{k(k+\ell)}\leq \frac{\ell}{k^2}.
\]
For the following moment calculations, first take the zero-start process.
The moment bounds and martingale orthogonality give
\[
 \E\left(\sum_{k=1}^n\frac{-\ell M_kX_{k-1}}{k(k+\ell)}\right)^2
 =\sum_{k=1}^n\frac{\ell^2}{k^2(k+\ell)^2}\E(M_k^2X_{k-1}^2)
 =\mathrm O(H_n),
\]
whereas
\[
 \E\left(\sum_{k=1}^n\frac{-\ell M_kV_{k-1}}{k(k+\ell)}\right)^2
 =\mathrm O(1),
 \qquad
 \E\left(\sum_{k=1}^n\frac{-\ell M_k}{k(k+\ell)}\right)^2
 =\mathrm O(1).
\]
For the design, for instance,
$\E\lvert\sum_{k=1}^n\frac{-\ell X_{k-1}^2}{k(k+\ell)}\rvert
=\mathrm O(n)$; the other entries follow from Cauchy--Schwarz in the same
way. Consequently,
\begin{equation}
A_{n,\ell}^{(w)}-A_{n,0}^{(w)}
=
\begin{pmatrix}
\Op(n)&\Op(\sqrt n)&\Op(H_n)\\
\Op(\sqrt n)&\Op(H_n)&\Op(1)\\
\Op(H_n)&\Op(1)&\mathrm O(1)
\end{pmatrix},
\qquad
d_{n,\ell}^{(w)}-d_{n,0}^{(w)}
=
\begin{pmatrix}
\Op(\sqrt{H_n})\\
\Op(1)\\
\Op(1)
\end{pmatrix}.
\label{eq:offset-perturbation-orders}
\end{equation}
For a fixed nonzero initial condition, the difference between each of
these offset comparisons and its zero-start counterpart is again a finite
sum, hence $\Op(1)$. Thus all the probability orders in
\eqref{eq:offset-perturbation-orders} hold for every fixed initial pair.
Also,
\[
H_{n,\ell}=H_n-\sum_{k=1}^n\frac{\ell}{k(k+\ell)}
=H_n+\mathrm O(1),
\qquad \mathsf{D}_{n,\ell}\mathsf{D}_{n,0}^{-1}\longrightarrow I_3.
\]
It follows from~\eqref{eq:offset-perturbation-orders} that
\[
\mathsf{D}_{n,\ell}(A_{n,\ell}^{(w)}-A_{n,0}^{(w)})\mathsf{D}_{n,\ell}\Pto0,
\qquad
\mathsf{D}_{n,\ell}(d_{n,\ell}^{(w)}-d_{n,0}^{(w)})\Pto0.
\]
Slutsky's lemma now gives the asserted joint limit. Since
$\det\widetilde A^{(w)}>0$ almost surely, the portmanteau theorem gives the
asymptotic invertibility statement.
\end{proof}

\needspace{9\baselineskip}
\begin{proposition}[Joint OLS--constrained-WLS limit]
\label{prop:joint-ols-wls}
Under the assumptions of Theorem~\ref{thm:main}, for every fixed initial condition
and every fixed $\ell\geq0$,
\[
\begin{pmatrix}
 S_n^{\mathrm{OLS}}\\[1mm]
 \sqrt n(\widetilde\beta_{n,\ell}^c-\beta)\\[1mm]
 \sqrt{H_{n,\ell}}(\widetilde\mu_{n,\ell}^c-\mu)
\end{pmatrix}
\D
\begin{pmatrix}
 \Psi_\varrho\\[1mm]
 \Xi_w\\[1mm]
 Z
\end{pmatrix},
\qquad \Psi_\varrho\perp Z.
\]
\end{proposition}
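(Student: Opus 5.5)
The plan is to put the OLS statistic and the constrained WLS errors on the same probability space as the enlarged process vector. Concretely, I will extend Lemma~\ref{lem:joint-process-Rn} to
\[
\bigl(\mathcal X^{(n)},\mathcal M^{(n)},\mathcal N^{(n)},\mathcal P^{(n)},Z_n\bigr)
\D\bigl(\mathcal X,\mathcal M,\mathcal N,\mathcal P,Z\bigr),
\]
with $Z$ independent of $(\mathcal X,W,\widetilde W)$. The extension reruns the blocking argument with $\mathcal M^{(n)}$ added to $T_n$.

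For the added coordinate, Doob's inequality gives $\E\max_{j\le r}|\sum_{k\le j}M_k|^2\le Cr^2$. Hence, on the event $E_n$, the original and barred versions of $\mathcal M^{(n)}$ differ uniformly by $\Op(r_n/n)=\op(1)$. The barred process is still independent of $Z_n^*$, and its law is that of a delayed fresh zero-start process, so \eqref{eq:barczy-process-limit} supplies its limit including the $\mathcal M$ coordinate. The transfer to fixed initial conditions follows the coupling in Proposition~\ref{prop:joint-convergence}. The surviving initial families die out in finite time, so every normalized sum changes only by a finite random constant divided by a diverging normalization.

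Next I will express both estimators through this vector.

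\emph{OLS statistic.} BIP's proof of their Theorem~2.1 writes $S_n^{\mathrm{OLS}}$ as a ratio built from the unweighted design and score sums: $n^{-2}\sum X_{k-1}$, $n^{-3}\sum X_{k-1}^2$, $n^{-1}\sum M_k=\mathcal M_1^{(n)}$, $n^{-2}\sum X_{k-1}M_k$, and the corresponding $V$-terms. I expect these to reduce, up to $\op(1)$, to a functional $\Phi(\mathcal X^{(n)},\mathcal M^{(n)},\mathcal N^{(n)},\mathcal P^{(n)})$ that is continuous at continuous limit paths. One ingredient is the identity $X_{k-1}=\{U_{k-1}+\beta V_{k-1}\}/(1+\beta)$, which expresses $\sum X_{k-1}M_k$ through $\mathcal N^{(n)}$ and a negligible $n^{-1/2}\mathcal P^{(n)}$ term. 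Another is that the $V$-block of the OLS design decouples at rates $n$ versus $\sqrt n$. I will check that these reductions give $\Phi(\mathcal X,\mathcal M,\mathcal N,\mathcal P)=\Psi_\varrho$.

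\emph{Constrained WLS.} Subtracting the true model $V_k=-\beta V_{k-1}+\mu+M_k$ gives a $2\times2$ system. Its design entries are $\sum w_{k,\ell}V_{k-1}^2$, $-\sum w_{k,\ell}V_{k-1}$ and $H_{n,\ell}$, and its score is $(-\sum w_{k,\ell}M_kV_{k-1},\ \sum w_{k,\ell}M_k)$. These are exactly the $(2,3)$ sub-block of $A^{(w)}_{n,\ell}$ and the last two coordinates of $d^{(w)}_{n,\ell}$. After scaling by $\diag(n^{-1/2},H_{n,\ell}^{-1/2})$, the proof of Propositions~\ref{prop:joint-convergence-base} and~\ref{prop:joint-convergence} gives joint convergence of these to $\diag(\tfrac{2\beta}{1+\beta}I_1,1)$ and the last two coordinates of $\widetilde d^{(w)}$. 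Those proofs obtain this limit as a continuous image of the weighted sums in $T_n$, which are themselves limits of functionals of the same process vector.

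The weighted score sums are not continuous functionals of the paths near $t=0$. I will therefore handle them, jointly with $\Phi$, by the $\zeta$-truncation and converging-together argument already used in Proposition~\ref{prop:joint-convergence-base}. The only change is to append $\Phi$ to the vectors $T_{n,\zeta}$ and $T_n$. Because $\Phi$ does not depend on $\zeta$, the truncation bounds of Lemma~\ref{lem:initial-segment-bounds} are unaffected.

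Inverting the limiting $2\times2$ matrix yields the pair $(\Xi_w,Z)$, using $2\beta\sqrt{\alpha/(1+\beta)}\big/\{2\beta/(1+\beta)\}=\sqrt{\alpha(1+\beta)}$. It holds jointly with $\Psi_\varrho$ by the continuous mapping theorem and Slutsky's lemma. Finally, $\Psi_\varrho$ is a measurable function of $(\mathcal X,\mathcal M)$, and $\mathcal M_t=(1+\beta)\mathcal X_t-\mu t$, so $\Psi_\varrho$ is $\sigma(\mathcal X)$-measurable. Since $Z$ is independent of $(\mathcal X,W,\widetilde W)$, this gives $\Psi_\varrho\perp Z$.

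I expect the main obstacle to be the OLS step: verifying that BIP's representation of $S_n^{\mathrm{OLS}}$ really is, up to $\op(1)$, a continuous functional of the four step processes alone. It must not involve an additional score that escapes the blocking argument, for example an unweighted sum dominated by early observations, which is what happens to the OLS intercept. My first approach is to follow BIP's equations (5.x) term by term and confirm that each entry has the form $\int f(\mathcal X^{(n)})\,dt$ or is an endpoint value of $\mathcal M^{(n)}$, $\mathcal N^{(n)}$ or $\mathcal P^{(n)}$. If an unweighted term like $\sum V_{k-1}M_k$ appears, I expect to show it is negligible at the OLS normalization, as in \eqref{eq:negligible-V-score}.
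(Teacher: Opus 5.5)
Your proposal is correct, and its overall architecture matches the paper's. You stack the normalized OLS design and score with the weighted ones, pass to the joint limit (the $\zeta$-truncation handles the weighted scores), apply the normal-equation maps, and read off $\Psi_\varrho\perp Z$ from the product structure of the limit.

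You depart from the paper in two places.

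First, you adjoin $\mathcal M^{(n)}$ by rerunning the blocking argument with a Doob bound. This works, but the paper gets it more cheaply from the identity $n^{-1}\sum_{k=1}^{\lfloor nt\rfloor}M_k=(1+\beta)\mathcal X^{(n)}_t-\beta n^{-1}V_{\lfloor nt\rfloor}-\mu\lfloor nt\rfloor/n$ together with $n^{-1}\max_{k\le n}|V_k|\Pto0$. These make $\mathcal M^{(n)}$ asymptotically a continuous function of $\mathcal X^{(n)}$. This is the pre-limit version of the relation you use at the end to show that $\Psi_\varrho$ is $\sigma(\mathcal X)$-measurable.

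Second, and more substantively, you treat the constrained estimator directly through its own $2\times2$ normal equations. These form an exact linear model under the unit root, because $V_k=-\beta V_{k-1}+\mu+M_k$. Their normalized design and score are the lower-right block and the last two score coordinates of the unrestricted system. The paper instead applies the full $3\times3$ unrestricted map and then swaps in the constrained estimators using the rates \eqref{eq:constrained-unrestricted-rates}.

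Your route is more self-contained for this statement. It never involves $\widetilde\varrho_{n,\ell}$ or the block identity, and in effect it reproves Theorem~\ref{thm:constrained-wls} jointly with the OLS statistic. The paper's route is shorter in context, because those rates are needed anyway for Theorem~\ref{thm:intercept-second-order}. It also gives joint convergence with the unrestricted estimators as a by-product.

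Two small points remain. Inverting the limiting $2\times2$ matrix actually gives $-\sqrt{\alpha(1+\beta)}\int_0^1t^{-1}\mathcal X_t\,d\widetilde W_t/I_1=-\Xi_w$. You therefore need the relabeling $\widetilde W\mapsto-\widetilde W$, as in the paper. It preserves the joint law because $\widetilde W$ is independent of $(\mathcal X,W)$ and $Z$ is independent of $(\mathcal X,W,\widetilde W)$.

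The obstacle you anticipate does not arise. $S_n^{\mathrm{OLS}}$ involves only $\mathcal M_1^{(n)}$, $\mathcal N_1^{(n)}$, $\mathcal P_1^{(n)}$ and Riemann sums, and $\mathcal M_1^{(n)}$ has its variance spread over the whole sample. The OLS intercept is inconsistent because $n^{-1}\sum_{k\le n}M_k$ has a nondegenerate limit, not because of early-observation dominance.
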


\begin{proof}
Put
\[
\begin{aligned}
z_k&:=(X_{k-1},-V_{k-1},1)^{\mathsf T},
& A_n^{(o)}&:=\sum_{k=1}^n z_kz_k^{\mathsf T},
& d_n^{(o)}&:=\sum_{k=1}^nM_kz_k,\\
E_n&:=\diag(n^{-3/2},n^{-1},n^{-1/2}),
& F_n&:=\diag(n^{-2},n^{-3/2},n^{-1}).
\end{aligned}
\]
Let $(\widetilde A^{(o)},\widetilde d^{(o)})$ denote the normalized OLS
design and score limits in BIP's Theorem~4.1.
First take $X_{-1}=X_0=0$ and $\ell=0$. The identity
\[
 \sum_{k=1}^{\lfloor nt\rfloor}M_k
 =U_{\lfloor nt\rfloor}-\mu\lfloor nt\rfloor
 =(1+\beta)X_{\lfloor nt\rfloor}
   -\beta V_{\lfloor nt\rfloor}-\mu\lfloor nt\rfloor
\]
and $n^{-1}\max_{k\leq n}|V_k|\Pto0$ show that the process
$\mathcal M^{(n)}$ may be adjoined to
Lemma~\ref{lem:joint-process-Rn}, with limit
$\mathcal M_t=(1+\beta)\mathcal X_t-\mu t$. The normalized unweighted score
coordinates are terminal-value functionals of
$(\mathcal M^{(n)},\mathcal N^{(n)},\mathcal P^{(n)})$. The design entries
are Riemann sums of $\mathcal X^{(n)}$ together with the $\op(1)$ remainder
terms established in BIP's Theorem~4.1. Applying that calculation to the
joint convergence gives
\[
 (E_nA_n^{(o)}E_n,F_nd_n^{(o)},
   \widetilde A_n^{(w)},\widetilde d_n^{(w)})
 \D
 (\widetilde A^{(o)},\widetilde d^{(o)},
   \widetilde A^{(w)},\widetilde d^{(w)}).
\]
The fixed-start coupling in the proof of
Proposition~\ref{prop:joint-convergence} also applies to the unweighted sums:
each entry of $A_n^{(o,x)}-A_n^{(o,0)}$ and
$d_n^{(o,x)}-d_n^{(o,0)}$ is eventually a finite random constant. Hence
\[
 E_n(A_n^{(o,x)}-A_n^{(o,0)})E_n\longrightarrow0,
 \qquad
 F_n(d_n^{(o,x)}-d_n^{(o,0)})\longrightarrow0
 \quad\text{almost surely}.
\]
Together with Proposition~\ref{prop:joint-convergence}, this proves the
displayed joint design--score convergence for every fixed initial condition
and fixed $\ell$.

Finally, $F_n=n^{-1/2}E_n$. On the event that $A_n^{(o)}$ is nonsingular,
the OLS normal equations imply
\[
 \diag(n,\sqrt n,1)
 \bigl(\widehat\varrho_n-1,\widehat\beta_n-\beta,
 \widehat\mu_n-\mu\bigr)^{\mathsf T}
 =(E_nA_n^{(o)}E_n)^{-1}F_nd_n^{(o)}.
\]
The first coordinate of the limiting right-hand side is
$\Psi_\varrho$. Applying the unrestricted-WLS normal-equation map jointly to
the stacked convergence gives the joint limit with the normalized unrestricted
estimators of $(\beta,\mu)$, using the replacement
$\widetilde W\mapsto-\widetilde W$ from the proof of Theorem~\ref{thm:main}.
The rates in \eqref{eq:constrained-unrestricted-rates}, proved in
Appendix~\ref{app:constrained-comparison}, make their replacement by the
constrained estimators negligible after normalization. Slutsky's lemma
therefore gives the asserted joint limit. Since
$\det\widetilde A^{(o)}=
2\beta J_1(J_2-J_1^2)/(1+\beta)>0$ almost surely, all inverses used here exist
with probability tending to one, using also
Proposition~\ref{prop:joint-convergence} for the WLS inverse.
Independence follows because
$\Psi_\varrho$ is measurable with respect to $(\mathcal X,W)$, whereas $Z$
is independent of $(\mathcal X,W,\widetilde W)$.
\end{proof}

\section{Proof of Theorem~\ref{thm:main}}

\begin{proof}
Combining \eqref{eq:normal-canonical} with the scaling matrix $\mathsf{D}_{n,\ell}$
gives
\[
\begin{pmatrix}
n(\widetilde\varrho_{n,\ell}-1)\\[2mm]
\sqrt n\,(\widetilde\beta_{n,\ell}-\beta)\\[2mm]
\sqrt{H_{n,\ell}}\,(\widetilde\mu_{n,\ell}-\mu)
\end{pmatrix}
=
\bigl(\widetilde A_{n,\ell}^{(w)}\bigr)^{-1}
\widetilde d_{n,\ell}^{(w)}.
\]
By Proposition~\ref{prop:joint-convergence},
\[
\bigl(\widetilde A_{n,\ell}^{(w)},\widetilde d_{n,\ell}^{(w)}\bigr)
\D
\bigl(\widetilde A^{(w)},\widetilde d^{(w)}\bigr).
\]
Since $\det \widetilde A^{(w)}>0$ almost surely, the continuous mapping theorem yields
\[
\bigl(\widetilde A_{n,\ell}^{(w)}\bigr)^{-1}
\widetilde d_{n,\ell}^{(w)}
\D
\bigl(\widetilde A^{(w)}\bigr)^{-1}\widetilde d^{(w)}
=
\begin{pmatrix}
\dfrac{\sqrt{2\alpha\beta}\int_0^1 t^{-1}\mathcal X_t^{3/2}\,dW_t}{I_2}\\[4mm]
-\sqrt{\alpha(1+\beta)}\,\dfrac{\int_0^1 t^{-1}\mathcal X_t\,d\widetilde W_t}{I_1}\\[4mm]
Z
\end{pmatrix}.
\]
Because $-\widetilde W$ is again a standard Wiener process independent of
$(\mathcal X,W)$, we may rewrite the second coordinate with the sign convention
used in \eqref{eq:wls-limit-functionals}. This proves
\eqref{eq:main-canonical}.
\end{proof}

\section{Proofs of Theorems~\ref{thm:constrained-wls} and~\ref{thm:intercept-second-order}}
\label{app:constrained-comparison}
\begin{proof}
Fix $\ell\geq0$ and set
\[
\begin{aligned}
 S_{X,n,\ell}&:=\sum_{k=1}^n\frac{X_{k-1}}{k+\ell},
 &S_{V,n,\ell}&:=\sum_{k=1}^n\frac{V_{k-1}}{k+\ell},\\
 S_{XV,n,\ell}&:=\sum_{k=1}^n\frac{X_{k-1}V_{k-1}}{k+\ell},
 &S_{VV,n,\ell}&:=\sum_{k=1}^n\frac{V_{k-1}^2}{k+\ell}.
\end{aligned}
\]
The $(\beta,\mu)$ principal block and the corresponding $\varrho$ column
of $A_{n,\ell}^{(w)}$ are
\[
 C_{n,\ell}=\begin{pmatrix}
 S_{VV,n,\ell}&-S_{V,n,\ell}\\-S_{V,n,\ell}&H_{n,\ell}
 \end{pmatrix},
 \qquad
 b_{n,\ell}=\begin{pmatrix}-S_{XV,n,\ell}\\S_{X,n,\ell}\end{pmatrix}.
\]
By Proposition~\ref{prop:joint-convergence}, $A_{n,\ell}^{(w)}$ is
nonsingular with probability tending to one. On this event it is positive
definite, as is $C_{n,\ell}$, and the normal equations for
\eqref{eq:wls-criterion} and~\eqref{eq:wls-criterion-constrained} give
\[
 \begin{pmatrix}
 \widetilde\beta_{n,\ell}^c-\widetilde\beta_{n,\ell}\\
 \widetilde\mu_{n,\ell}^c-\widetilde\mu_{n,\ell}
 \end{pmatrix}
 =C_{n,\ell}^{-1}b_{n,\ell}(\widetilde\varrho_{n,\ell}-1).
\]
Lemma~\ref{lem:weighted-V-sums}, together with the fixed-start coupling and
fixed-offset bounds in the proof of Proposition~\ref{prop:joint-convergence},
gives $S_{V,n,\ell}=\Op(H_{n,\ell})$ and $S_{XV,n,\ell}=\Op(n)$.
The weighted-sum convergence established in the proofs of
Propositions~\ref{prop:joint-convergence-base} and~\ref{prop:joint-convergence},
combined with the normal-equation map used in Theorem~\ref{thm:main}, gives,
jointly with \eqref{eq:main-canonical},
\[
 \left(\frac{S_{X,n,\ell}}n,\frac{S_{VV,n,\ell}}n,
       \frac{\det C_{n,\ell}}{nH_{n,\ell}}\right)
 \D (I_1,c_\beta I_1,c_\beta I_1).
\]
Here $\det C_{n,\ell}=S_{VV,n,\ell}H_{n,\ell}-S_{V,n,\ell}^2$ and
$S_{V,n,\ell}^2/(nH_{n,\ell})=\Op(H_{n,\ell}/n)=\op(1)$.
Since $c_\beta I_1>0$ almost surely, the two coordinates of
$C_{n,\ell}^{-1}b_{n,\ell}$ are respectively $\Op(1)$ and
$\Op(n/H_{n,\ell})$. Theorem~\ref{thm:main} and the block identity imply
\begin{equation}
 \widetilde\beta_{n,\ell}^c-\widetilde\beta_{n,\ell}=\Op(n^{-1}),
 \qquad
 \widetilde\mu_{n,\ell}^c-\widetilde\mu_{n,\ell}=\Op(H_{n,\ell}^{-1}).
 \label{eq:constrained-unrestricted-rates}
\end{equation}
Multiplying these differences by $\sqrt n$ and $\sqrt{H_{n,\ell}}$,
respectively, and applying Slutsky's lemma proves
Theorem~\ref{thm:constrained-wls}.

For Theorem~\ref{thm:intercept-second-order}, the intercept coordinate of
the same block identity gives
\begin{equation}
 H_{n,\ell}(\widetilde\mu_{n,\ell}^c-\widetilde\mu_{n,\ell})
 =\frac{S_{VV,n,\ell}S_{X,n,\ell}-S_{V,n,\ell}S_{XV,n,\ell}}{n^2}
  \frac{nH_{n,\ell}}{\det C_{n,\ell}}
 \,n(\widetilde\varrho_{n,\ell}-1).
 \label{eq:exact-intercept-gap}
\end{equation}
Since $S_{V,n,\ell}S_{XV,n,\ell}/n^2=\Op(H_{n,\ell}/n)=\op(1)$,
the product of the two fractions converges jointly to $I_1$.
The continuous mapping theorem yields the asserted augmented joint limit,
with final coordinate $I_1\Psi_w$.
\end{proof}

\section{Proof of Proposition~\ref{prop:fixed-offset-intercept}}

\begin{proof}
It is enough to compare an arbitrary fixed offset $\ell$ with offset zero.
Write
\[
\widetilde{\boldsymbol\theta}_{n,\ell}
:=(\widetilde\varrho_{n,\ell},\widetilde\beta_{n,\ell},
\widetilde\mu_{n,\ell})^{\mathsf T},
\qquad
\boldsymbol\theta:=(1,\beta,\mu)^{\mathsf T}.
\]
Subtracting the offset and zero-offset unrestricted normal equations gives
\[
A_{n,\ell}^{(w)}
(\widetilde{\boldsymbol\theta}_{n,\ell}
-\widetilde{\boldsymbol\theta}_{n,0})
=
(d_{n,\ell}^{(w)}-d_{n,0}^{(w)})
-(A_{n,\ell}^{(w)}-A_{n,0}^{(w)})
(\widetilde{\boldsymbol\theta}_{n,0}-\boldsymbol\theta).
\]
Theorem~\ref{thm:main} and~\eqref{eq:offset-perturbation-orders} make the
right-hand side
$(\Op(\sqrt{H_n}),\Op(1),\Op(1))^{\mathsf T}$. The normalized design limit implies that
the three entries in the third row of $(A_{n,\ell}^{(w)})^{-1}$ have orders
$\Op\!\left(1/(n\sqrt{H_n})\right)$,
$\Op\!\left(1/\sqrt{nH_n}\right)$, and $\Op(H_n^{-1})$,
respectively. Hence
$\widetilde\mu_{n,\ell}-\widetilde\mu_{n,0}=\Op(H_n^{-1})$.
The constrained two-dimensional normal equations give
$\widetilde\mu_{n,\ell}^c-
\widetilde\mu_{n,0}^c=\Op(H_n^{-1})$ by the same argument. Applying both
comparisons to $\ell_1$ and $\ell_2$, then using the triangle inequality and
$H_n\sim\log n$, proves the proposition.
\end{proof}

\Needspace{12\baselineskip}
\section{Proof of Proposition~\ref{prop:fixed-stationary}}
\label{app:fixed-stationary}

\begin{lemma}[Fixed-offset logarithmic ergodic law]
\label{lem:fixed-offset-log-ergodic}
Suppose that a sequence $(Y_k)$ satisfies
\[
 \frac1n\sum_{k=1}^nY_k\longrightarrow m
 \qquad\text{almost surely.}
\]
Then, for every fixed $\ell\geq0$,
\[
 \frac1{H_{n,\ell}}
 \sum_{k=1}^n \frac{Y_k}{k+\ell}
 \longrightarrow m
 \qquad\text{almost surely.}
\]
\end{lemma}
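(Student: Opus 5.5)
This is a deterministic Abel-summation argument: it uses only the almost sure convergence of Cesàro means, applied pathwise. Fix an $\omega$ in the almost sure event on which $n^{-1}\sum_{k=1}^nY_k\to m$. Put $S_k:=\sum_{j=1}^kY_j$ with $S_0:=0$, and write $S_k=km+k r_k$ with $r_k\to0$. Summation by parts with weights $w_k=(k+\ell)^{-1}$ gives
\[
\sum_{k=1}^n\frac{Y_k}{k+\ell}
=\frac{S_n}{n+\ell}
+\sum_{k=1}^{n-1}S_k\left(\frac1{k+\ell}-\frac1{k+\ell+1}\right)
=\frac{S_n}{n+\ell}+\sum_{k=1}^{n-1}\frac{S_k}{(k+\ell)(k+\ell+1)}.
\]
Everything below concerns this identity for the fixed path.

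First substitute $S_k=km+kr_k$ and treat the $m$-part. Applying the same summation-by-parts identity to the constant sequence $Y_k\equiv m$ returns exactly $mH_{n,\ell}$, so the $m$-contribution equals $mH_{n,\ell}$ with no error term.

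Next bound the remainder. The boundary term is $nr_n/(n+\ell)=o(1)$. The summed remainder is $\sum_{k<n}kr_k/\{(k+\ell)(k+\ell+1)\}$, and its coefficients satisfy $k/\{(k+\ell)(k+\ell+1)\}\le 1/k$. Given $\varepsilon>0$, choose $K$ with $|r_k|\le\varepsilon$ for $k\ge K$. The first $K$ terms then give a finite constant, and the rest are bounded by $\varepsilon H_n$. Dividing by $H_{n,\ell}=H_n+\mathrm O(1)\to\infty$ yields $\limsup|H_{n,\ell}^{-1}\sum_k Y_k/(k+\ell)-m|\le\varepsilon$. Since $\varepsilon$ is arbitrary, the limit is $m$, and because $\omega$ was arbitrary in a probability-one event, the convergence holds almost surely.

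No step is a real obstacle. The only point needing care is that $H_{n,\ell}$ diverges, so the finitely many early terms and the $\mathrm O(1)$ discrepancy between $H_{n,\ell}$ and $H_n$ disappear after normalization. The case $\ell=0$ needs no special treatment, since the identities above hold for every $\ell\ge0$ (all denominators are at least $k\ge1$).
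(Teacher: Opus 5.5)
Your proof is correct and follows essentially the same route as the paper: Abel summation against the partial sums, then a Toeplitz-type averaging argument on each path where the Ces\`aro means converge. The paper simply cites Toeplitz's lemma (after noting that the Abel coefficients are nonnegative, sum to one, and vanish for each fixed index), whereas you carry out that step by hand through the split into the exact $m$-part and the $\varepsilon$-controlled remainder.
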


\begin{proof}
Set $B_k:=k^{-1}\sum_{j=1}^kY_j$. Abel summation gives
\[
 \frac1{H_{n,\ell}}\sum_{k=1}^n\frac{Y_k}{k+\ell}
 =\frac{nB_n}{(n+\ell)H_{n,\ell}}
  +\sum_{k=1}^{n-1}
    \frac{kB_k}{(k+\ell)(k+\ell+1)H_{n,\ell}}.
\]
The coefficients on the right are nonnegative, sum to one, and tend to zero
for each fixed index. Toeplitz's lemma applies on every sample path on
which $B_k\to m$.
\end{proof}

\begin{proof}[Proof of Proposition~\ref{prop:fixed-stationary}]
For $\varrho=\alpha+\beta<1$, the pair
$\mathbf X_k=(X_k,X_{k-1})^{\mathsf T}$ is a subcritical two-type
Galton--Watson process with immigration. Its type-specific offspring vectors
are $(B_\alpha,1)^{\mathsf T}$ and $(B_\beta,0)^{\mathsf T}$, where
$B_\alpha$ and $B_\beta$ are Bernoulli variables with means $\alpha$ and
$\beta$, and its immigration vector is $(\varepsilon_k,0)^{\mathsf T}$.
The mean operator on column population vectors is
$\left(\begin{smallmatrix}\alpha&\beta\\1&0\end{smallmatrix}\right)$,
whose spectral radius is less than one. The mean-matrix convention in
\citet{szucs2024ergodic} is the transpose of this column-vector convention.
The bounded offspring vectors of both types satisfy the final
arbitrary-state proviso of \citet[Theorem~2.5]{szucs2024ergodic}
with moment order eight.

By \citet[Theorems~2.1--2.2 and~2.5]{szucs2024ergodic} and the discussion
following Theorem~2.2 there, the chain has a
unique invariant law $\pi$, is positive Harris recurrent, and satisfies
\begin{equation}
 \sup_{k\geq-1}\E(X_k^8)<\infty
 \label{eq:fixed-stationary-uniform-eighth}
\end{equation}
from every fixed initial pair. Theorem~2.5 also gives finite eighth moments
under $\pi$ and convergence to the stationary expectations for functions
growing at most as the eighth power of the state norm. Positive Harris
recurrence and \citet[Theorem~17.0.1(i)]{meyn1993} give
\[
 \frac1n\sum_{k=1}^n f(\mathbf X_k)
 \longrightarrow\int f\,d\pi
 \qquad\text{almost surely}
\]
from every fixed initial pair whenever $\int|f|\,d\pi<\infty$.
Together with the stationary moment bound, this supplies the ergodic
strong laws used below.

Write $\E_\pi$ for stationary expectation and let
$\gamma_j=\Cov_\pi(X_k,X_{k-j})$. The conditional mean equation gives
$\gamma_1=\alpha\gamma_0+\beta\gamma_1$, so
\[
 \Cov_\pi\begin{pmatrix}X_{k-1}\\X_{k-2}\end{pmatrix}
 =\gamma_0\begin{pmatrix}1&a\\a&1\end{pmatrix},
 \qquad a=\frac{\alpha}{1-\beta}\in(0,1).
\]
The law of total variance and $\E_\pi X_k=\mu/(1-\varrho)$ imply
\[
 \gamma_0\geq\E_\pi\{\Var(X_k\mid\mathcal F_{k-1})\}
 =\{\alpha(1-\alpha)+\beta(1-\beta)\}\frac{\mu}{1-\varrho}
  +\Var(\varepsilon_1)>0.
\]
The lag covariance matrix is therefore positive definite. Set
\[
 z_k:=(X_{k-1},-V_{k-1},1)^{\mathsf T},\qquad
 Q_z:=\E_\pi(z_kz_k^{\mathsf T}).
\]
The moment bound makes $Q_z$ finite. The vector $z_k$ is an invertible
linear transformation of the lag pair augmented by the constant one.
The positive-definite lag covariance therefore implies $Q_z>0$. Hence
Lemma~\ref{lem:fixed-offset-log-ergodic} applies entrywise to the design
moments in both WLS systems.

Let
$\vartheta_0:=(\varrho,\beta,\mu)^{\mathsf T}$ and
$\widetilde\vartheta_{n,\ell}:=(\widetilde\varrho_{n,\ell},
\widetilde\beta_{n,\ell},\widetilde\mu_{n,\ell})^{\mathsf T}$.
The general canonical
equation~\eqref{eq:canonical} is
\[
 X_k=z_k^{\mathsf T}\vartheta_0+M_k,
 \qquad
 \E(M_k\mid\mathcal F_{k-1})=0.
\]
Define
\[
 A_{n,\ell}^{(w)}:=\sum_{k=1}^n
 \frac{z_kz_k^{\mathsf T}}{k+\ell},
 \qquad
 d_{n,\ell}^{(w)}:=\sum_{k=1}^n \frac{z_kM_k}{k+\ell}.
\]
The logarithmic ergodic law gives
\[
 \frac{A_{n,\ell}^{(w)}}{H_{n,\ell}}
 \longrightarrow Q_z
 \qquad\text{almost surely.}
\]
Moreover, $(d_{n,\ell}^{(w)})_{n\geq1}$ is a square-integrable vector
martingale. Indeed, writing
\[
 \sigma_k^2:=\E(M_k^2\mid\mathcal F_{k-1})
 =\alpha(1-\alpha)X_{k-1}
  +\beta(1-\beta)X_{k-2}
  +\Var(\varepsilon_1),
\]
we have
\[
 \E\!\left(M_k^2\lVert z_k\rVert^2\right)
 =\E\!\left(\sigma_k^2\lVert z_k\rVert^2\right)
 \leq C\E(1+X_{k-1}^3+X_{k-2}^3).
\]
Consequently,~\eqref{eq:fixed-stationary-uniform-eighth} and martingale
orthogonality give
\[
 \sup_{n\geq1}\E\bigl\lVert d_{n,\ell}^{(w)}\bigr\rVert^2
 =\sup_{n\geq1}\sum_{k=1}^n \frac1{(k+\ell)^2}
   \E\!\left(M_k^2\lVert z_k\rVert^2\right)
 <\infty,
\]
because $\sum_{k\geq1}(k+\ell)^{-2}<\infty$. The martingale convergence
theorem therefore yields
\[
 d_{n,\ell}^{(w)}
 \longrightarrow d_{\infty,\ell}^{(w,x)}
 \qquad\text{almost surely and in }L^2.
\]

The limiting score is nondegenerate. To see this, martingale orthogonality
gives
\[
 \Cov(d_{\infty,\ell}^{(w,x)})
 =\sum_{k=1}^{\infty}\frac1{(k+\ell)^2}
   \E(\sigma_k^2 z_kz_k^{\mathsf T}).
\]
Under the invariant law, $\sigma_k^2>0$ almost surely: this is immediate if
$\Var(\varepsilon_1)>0$, while if $\Var(\varepsilon_1)=0$, then
$\varepsilon_1=\mu>0$ almost surely and the stationary counts are positive.
Since $Q_z>0$ and $\sigma_k^2>0$ almost surely under $\pi$, the matrix
$\E_\pi(\sigma_k^2 z_kz_k^{\mathsf T})$ is positive definite. Each of its
integrands is a polynomial of degree at most three in the lag pair, so
\citet[Theorem~2.5]{szucs2024ergodic} gives
$\E(\sigma_k^2 z_kz_k^{\mathsf T})\to\E_\pi(\sigma_k^2 z_kz_k^{\mathsf T})$.
Hence $\Cov(d_{\infty,\ell}^{(w,x)})$ is positive definite.

Finally, positive definiteness of $Q_z$ implies eventual almost-sure
invertibility, and the unrestricted normal equations give
\[
 H_{n,\ell}(\widetilde\vartheta_{n,\ell}-\vartheta_0)
 =\left(\frac{A_{n,\ell}^{(w)}}{H_{n,\ell}}\right)^{-1}
   d_{n,\ell}^{(w)}
 \xrightarrow{\mathrm{a.s.}}Q_z^{-1}d_{\infty,\ell}^{(w,x)}
 =:\Lambda_{\ell,x}.
\]
Because $Q_z$ and $\Cov(d_{\infty,\ell}^{(w,x)})$ are positive definite, so is
$\Cov(\Lambda_{\ell,x})$. This proves the claimed unrestricted rate
statement; since $H_{n,\ell}\to\infty$, it also gives the first limit
in~\eqref{eq:fixed-stationary-limits}.

For the constrained estimator, put
\[
 r_k:=(-V_{k-1},1)^{\mathsf T},
 \qquad
 L_c(b,m):=\E_\pi(V_k+bV_{k-1}-m)^2.
\]
Stationarity gives $\E_\pi(V_k)=\E_\pi(V_{k-1})=0$. Moreover,
positive definiteness of $Q_z$ implies
$\sigma_V^2:=\E_\pi(V_{k-1}^2)>0$. Therefore the unique population projection
satisfies
\begin{equation}
 m^\dagger=0,
 \qquad
 b^\dagger
 =-\frac{\E_\pi(V_kV_{k-1})}{\sigma_V^2},
 \label{eq:fixed-stationary-projection}
\end{equation}

Subtracting $X_{k-1}$ from~\eqref{eq:canonical} gives
\[
 V_k=(\varrho-1)X_{k-1}-\beta V_{k-1}+\mu+M_k.
\]
The martingale-difference property gives
$\E_\pi(M_kV_{k-1})=0$, while
\[
 2X_{k-1}V_{k-1}
 =V_{k-1}^2+X_{k-1}^2-X_{k-2}^2
\]
and stationarity imply
$\E_\pi(X_{k-1}V_{k-1})=\sigma_V^2/2$. Consequently,
\begin{align*}
 \E_\pi(V_kV_{k-1})
 &=(\varrho-1)\E_\pi(X_{k-1}V_{k-1})
   -\beta\E_\pi(V_{k-1}^2)\\
 &=-\left\{\beta+\frac{1-\varrho}{2}\right\}\sigma_V^2.
\end{align*}
Combining this with~\eqref{eq:fixed-stationary-projection} gives
$b^\dagger=\beta+(1-\varrho)/2$, as asserted in
\eqref{eq:fixed-stationary-limits}.

Finally,
\[
 Q_r:=\E_\pi(r_kr_k^{\mathsf T})
 =\begin{pmatrix}\sigma_V^2&0\\0&1\end{pmatrix}
\]
is positive definite. Lemma~\ref{lem:fixed-offset-log-ergodic} applied to
the constrained design moments gives
\[
 \frac1{H_{n,\ell}}
 \sum_{k=1}^n \frac{r_kr_k^{\mathsf T}}{k+\ell}\longrightarrow Q_r
 \qquad\text{almost surely.}
\]
For the response moments, the decomposition of $V_k$ above gives
\[
\begin{aligned}
 \frac1{H_{n,\ell}}\sum_{k=1}^n\frac{r_kV_k}{k+\ell}
 &=\frac1{H_{n,\ell}}\sum_{k=1}^n
   \frac{r_k\{(\varrho-1)X_{k-1}-\beta V_{k-1}+\mu\}}{k+\ell}\\
 &\quad+\frac1{H_{n,\ell}}\sum_{k=1}^n\frac{r_kM_k}{k+\ell}.
\end{aligned}
\]
The entries of the predictable term are polynomials of degree at most two
in the lag pair $\mathbf X_{k-1}$, so the logarithmic ergodic law applies to
the first sum. The unnormalized martingale sum in the second term consists of the
last two components of $d_{n,\ell}^{(w)}$, which converges almost surely
as proved above. Since $H_{n,\ell}\to\infty$, the second term tends to zero
almost surely. Using $\E_\pi(r_kM_k)=0$ and the stationary identities above,
we obtain
\[
 \frac1{H_{n,\ell}}
 \sum_{k=1}^n \frac{r_kV_k}{k+\ell}
 \longrightarrow\E_\pi(r_kV_k)=Q_r
 \begin{pmatrix}b^\dagger\\0\end{pmatrix}
\]
almost surely. This proves eventual invertibility and the constrained limit in
\eqref{eq:fixed-stationary-limits}.

The scalar intercept order used to justify the selection condition follows
directly from summation by parts. For $j\in\{0,1\}$,
\[
\sum_{k=1}^n w_{k,\ell}V_{k-j}
=w_{n,\ell}X_{n-j}-w_{1,\ell}X_{-j}
+\sum_{k=1}^{n-1}(w_{k,\ell}-w_{k+1,\ell})X_{k-j}
=\Op(1),
\]
where the final order follows from
\eqref{eq:fixed-stationary-uniform-eighth}. The intercept normal equation is
\[
H_{n,\ell}\widetilde\mu_{n,\ell}^c
=\sum_{k=1}^n w_{k,\ell}
 \bigl(V_k+\widetilde\beta_{n,\ell}^cV_{k-1}\bigr).
\]
Since $\widetilde\beta_{n,\ell}^c\Pto\beta+(1-\varrho)/2$, the right-hand side
is $\Op(1)$, proving
$\widetilde\mu_{n,\ell}^c=\Op(H_{n,\ell}^{-1})$.
\end{proof}

\section{Proof of Theorem~\ref{thm:ols-test}}

\begin{proof}
Under $H_0$, Theorems~\ref{thm:main} and
\ref{thm:constrained-wls} give
\[
(\widetilde\beta_{n,\ell},\widetilde\mu_{n,\ell})
\Pto(\beta,\mu),
\qquad
(\widetilde\beta_{n,\ell}^c,\widetilde\mu_{n,\ell}^c)
\Pto(\beta,\mu),
\]
so both WLS fits are admissible with probability tending to one. On the
constrained-admissibility event, continuity of $\delta(\cdot,\cdot)$ gives
$\widetilde\delta_{n,\ell}^C\Pto\delta(\beta,\mu)>0$.
Continuity of the quantile map gives, on the respective admissibility events,
\[
q_{\tau,n,\ell}^{\mathrm{OLS},C}
\Pto q^{\mathrm{OLS}},
\qquad
q_{\tau,n,\ell}^{\mathrm{OLS},W}
\Pto q^{\mathrm{OLS}},
\qquad
q^{\mathrm{OLS}}:=q_\tau^{\mathrm{OLS}}(\beta,\mu).
\]
Thus the operational selector converges to $q^{\mathrm{OLS}}$ regardless of
which of its two asymptotically admissible WLS branches is selected.
Together with $S_n^{\mathrm{OLS}}\D\Psi_\varrho$, Slutsky's lemma
and the no-atom condition yield
\[
\PP\{S_n^{\mathrm{OLS}}<q_{\tau,n,\ell}^{\mathrm{OLS}}\}
\longrightarrow
\PP(\Psi_\varrho<q^{\mathrm{OLS}})=\tau.
\]
Now fix $\varrho<1$. By Proposition~\ref{prop:fixed-stationary},
$\widetilde\mu_{n,\ell}^c=\Op(H_{n,\ell}^{-1})$ and
$\widetilde\beta_{n,\ell}^c\Pto\beta+(1-\varrho)/2\in(0,1)$. Consequently,
\[
\widetilde\delta_{n,\ell}^C
=\Op(H_{n,\ell}^{-1})
=\op(\eta_{n,\ell})
\]
whenever the constrained fit is admissible, so the probability that the first
branch of \eqref{eq:ols-critical-value} is selected tends to zero.
Proposition~\ref{prop:fixed-stationary} gives
\[
(\widetilde\beta_{n,\ell},\widetilde\mu_{n,\ell})
\Pto(\beta,\mu)\in\Theta_0.
\]
Moreover, standard stationary OLS theory gives, on the nonsingular OLS event,
\[
\frac{S_n^{\mathrm{OLS}}}{n}
=\widehat\varrho_n-1\Pto\varrho-1<0.
\]
Thus the unrestricted-WLS fit is admissible with probability tending to one,
and local boundedness of the quantile map makes the selected critical value
$\Op(1)$. Since $S_n^{\mathrm{OLS}}/n\Pto\varrho-1<0$, the statistic
diverges to $-\infty$ at rate $n$, proving consistency of the proposed test.
The OLS design matrix is nonsingular with probability tending to one under
both regimes, so assigning nonrejection on its singular event does not alter
either limit.
\end{proof}

\section{Proof of Lemma~\ref{lem:residual-score-variance}}
\label{app:residual-score-variance}

\begin{proof}
Fix $\ell\geq0$ and put $w_k=(k+\ell)^{-1}$ and $H=H_{n,\ell}$.
Under the exact unit root,
\[
 \E(M_k^2\mid\mathcal F_{k-1})
 =\alpha\beta(X_{k-1}+X_{k-2})+\Var(\varepsilon_1).
\]
At zero start,
Lemma~\ref{lem:weighted-V-k2}, $H/H_n\to1$, and
$w_k^2-k^{-2}=O(k^{-3})$ give
\[
 \frac1H\sum_{k=1}^n w_k^2X_{k-r}\Pto\frac\mu{1+\beta},
 \qquad r=1,2.
\]
The centered squares
$M_k^2-\E(M_k^2\mid\mathcal F_{k-1})$ are orthogonal and
$\E(M_k^4)=O(k^2)$, so their weighted sum divided by $H$ has second moment
bounded by $CH^{-2}\sum k^{-2}=o(1)$. The conditional variance identity
therefore yields
\begin{equation}
 Q_n:=\frac1H\sum_{k=1}^n w_k^2M_k^2\Pto\sigma_\mu^2.
 \label{eq:residual-score-true-variance}
\end{equation}
Use the fixed-start coupling from the proof of
Proposition~\ref{prop:joint-convergence}, with superscripts $x$ and $0$
denoting the fixed-start and zero-start processes. Let $T$ be the almost
surely finite extinction time supplied there by
Lemma~\ref{lem:old-family-survival}, and put $\kappa=T+2$. For all $k\geq\kappa$,
$M_k^x=M_k^0$ and $V_{k-1}^x=V_{k-1}^0$. Hence, for $n\geq\kappa$,
\[
\begin{aligned}
 Q_n^x-Q_n^0
 &=\frac1H\sum_{k=1}^{\kappa-1}w_k^2
   \{(M_k^x)^2-(M_k^0)^2\}
 =\frac{C_\ell(\omega)}H\longrightarrow0
 \quad\text{almost surely},\\
 C_\ell(\omega)
 &:=\sum_{k=1}^{\kappa-1}w_k^2\{(M_k^x)^2-(M_k^0)^2\}.
\end{aligned}
\]
The random variable $C_\ell$ is finite almost surely, so
\eqref{eq:residual-score-true-variance} holds from every fixed initial pair.
At zero start, \eqref{eq:moment-bounds} gives
\[
 \E\!\left\{\sum_{k=1}^n w_k^2(V_{k-1}^0)^2\right\}
 \leq C\sum_{k=1}^n\frac{k}{(k+\ell)^2}\leq CH,
\]
and hence this sum is $\Op(H)$. Similarly, for $n\geq\kappa$,
\[
\begin{aligned}
 \frac1H\sum_{k=1}^n w_k^2\{(V_{k-1}^x)^2-(V_{k-1}^0)^2\}
 &=\frac{D_\ell(\omega)}H\longrightarrow0
 \quad\text{almost surely},\\
 D_\ell(\omega)
 &:=\sum_{k=1}^{\kappa-1}w_k^2
   \{(V_{k-1}^x)^2-(V_{k-1}^0)^2\}.
\end{aligned}
\]
Since $D_\ell$ is finite almost surely, the bound
$\sum_{k=1}^n w_k^2V_{k-1}^2=\Op(H)$ also holds from every fixed initial pair.

Let $e_k=\widehat M_{k,\ell}-M_k$. The rates from
Theorem~\ref{thm:constrained-wls} and
$e_k=(\widetilde\beta_{n,\ell}^c-\beta)V_{k-1}
-(\widetilde\mu_{n,\ell}^c-\mu)$ imply
\[
\begin{aligned}
 R_n&:=H^{-1}\sum_{k=1}^n w_k^2e_k^2\\
 &\leq\frac2H(\widetilde\beta_{n,\ell}^c-\beta)^2
             \sum_{k=1}^n w_k^2V_{k-1}^2
 +\frac2H(\widetilde\mu_{n,\ell}^c-\mu)^2\sum_{k=1}^n w_k^2\\
 &=\Op(n^{-1}+H^{-2})=\op(1).
\end{aligned}
\]
Cauchy--Schwarz bounds the absolute difference between the residual
variance estimate and $Q_n$ by $2\sqrt{Q_nR_n}+R_n=o_{\mathbb P}(1)$.
\end{proof}
\section{Proof of Corollary~\ref{cor:pretest-joint}}
\begin{proof}
Put
$Y_{n,\ell}^c:=\sqrt{H_{n,\ell}}
(\widetilde\mu_{n,\ell}^c-\mu)$. Proposition~\ref{prop:joint-ols-wls},
consistency of the feasible OLS critical value in
Theorem~\ref{thm:ols-test}, and Slutsky's lemma give
\[
(S_n^{\mathrm{OLS}},Y_{n,\ell}^c,
q_{\tau,n,\ell}^{\mathrm{OLS}})
\D(\Psi_\varrho,Z,q^{\mathrm{OLS}}).
\]
Since $q^{\mathrm{OLS}}$ is the lower $\tau$-quantile and
$\PP(\Psi_\varrho=q^{\mathrm{OLS}})=0$,
\[
\PP(A_{n,\ell}^{\mathrm{OLS}})\longrightarrow
p_\tau:=\PP(\Psi_\varrho\ge q^{\mathrm{OLS}})=1-\tau>0.
\]
For every $x\in\mathbb R$, the boundary of the corresponding bivariate event
is contained in
$\{Z=x\}\cup\{\Psi_\varrho=q^{\mathrm{OLS}}\}$ and therefore has
probability zero. The portmanteau theorem and
$Z\perp\Psi_\varrho$ yield
\[
\begin{aligned}
\PP(Y_{n,\ell}^c\le x\mid A_{n,\ell}^{\mathrm{OLS}})
&=\frac{\PP(Y_{n,\ell}^c\le x,A_{n,\ell}^{\mathrm{OLS}})}
{\PP(A_{n,\ell}^{\mathrm{OLS}})}\\
&\longrightarrow
\frac{\PP(Z\le x,\Psi_\varrho\ge q^{\mathrm{OLS}})}{p_\tau}
=\PP(Z\le x),
\end{aligned}
\]
which proves the conditional weak convergence.

Lemma~\ref{lem:residual-score-variance} gives
\[
\widehat\sigma_{\mu,n,\ell}^{2,\mathrm{RS}}\Pto\sigma_\mu^2.
\]
Because $\PP(A_{n,\ell}^{\mathrm{OLS}})\to1-\tau>0$, for every $\epsilon>0$,
\[
\PP\!\left(
|\widehat\sigma_{\mu,n,\ell}^{2,\mathrm{RS}}-\sigma_\mu^2|>\epsilon
\,\middle|\,A_{n,\ell}^{\mathrm{OLS}}
\right)
\le
\frac{\PP(|\widehat\sigma_{\mu,n,\ell}^{2,\mathrm{RS}}-
\sigma_\mu^2|>\epsilon)}
{\PP(A_{n,\ell}^{\mathrm{OLS}})}
\longrightarrow0.
\]
Thus the studentizer is also consistent conditional on nonrejection. The
conditional weak convergence, Slutsky's lemma, and continuity of the standard
normal distribution give the stated coverage limit for $\mu$. The delta-method
expansion in Section~\ref{sec:gaussian-inference} remains valid conditionally
on nonrejection, while
$\widehat\sigma_{g,n,\ell}^{2,\mathrm{RS}}\Pto\sigma_g^2$; this gives the
stated coverage limit for $g$.
\end{proof}

\section{Numerical calibration of OLS critical values}
\label{app:ols-quantile-calibration}

The numerical calibration uses $41$ squared-Bessel-dimension nodes spanning
$\delta\in[0.1,100]$. At each node, the limiting $5\%$ base quantile is
estimated from $50{,}000$ paths on a $3{,}000$-step time grid. Between nodes,
we interpolate $\log\{-q_{.05}^{\mathrm{base}}(\delta)\}$ linearly in
$\log\delta$. For dimensions below the grid, the continuation is
$q_{.05}^{\mathrm{base}}(0.1)(0.1/\delta)$; above the grid, we extrapolate
using a least-squares line fitted to the last five nodes in these logarithmic
coordinates.

\section{Sensitivity to the calibration threshold}
\label{app:ols-test-tuning}

\begin{table}[H]
\centering
\scriptsize
\setlength{\tabcolsep}{2pt}
\renewcommand{\arraystretch}{0.95}
\caption{Sensitivity of the constrained-WLS/unrestricted-WLS selector to the threshold multiplier.}
\label{tab:mc-wls-fallback-sensitivity}
\begin{tabular}{lrrcccc}
\toprule
Regime & $n$ & $\varrho$ & $c_{\mathrm{sel}}=.1$ & $c_{\mathrm{sel}}=.5$ & $c_{\mathrm{sel}}=1$ & $c_{\mathrm{sel}}=2$ \\
\midrule
Transient & 50 & 1 & 8.76 (0.13) / 97.8 & 8.76 (0.13) / 97.8 & 8.76 (0.13) / 97.8 & 8.76 (0.13) / 97.8 \\
& & .95 & 30.07 (0.21) / 98.2 & 30.07 (0.21) / 98.2 & 30.07 (0.21) / 98.2 & 30.07 (0.21) / 98.2 \\
& 100 & 1 & 6.82 (0.11) / 99.8 & 6.82 (0.11) / 99.8 & 6.82 (0.11) / 99.8 & 6.82 (0.11) / 99.8 \\
& & .95 & 49.50 (0.22) / 99.6 & 49.50 (0.22) / 99.6 & 49.50 (0.22) / 99.6 & 49.50 (0.22) / 99.6 \\
& 1000 & 1 & 5.02 (0.10) / 100.0 & 5.02 (0.10) / 100.0 & 5.02 (0.10) / 100.0 & 5.02 (0.10) / 100.0 \\
& & .95 & 100.00 (0.00) / 100.0 & 100.00 (0.00) / 100.0 & 100.00 (0.00) / 100.0 & 100.00 (0.00) / 100.0 \\
\addlinespace
Boundary & 50 & 1 & 5.40 (0.10) / 95.5 & 5.80 (0.10) / 91.6 & 6.42 (0.11) / 84.4 & 7.86 (0.12) / 68.9 \\
& & .95 & 7.59 (0.12) / 96.2 & 8.36 (0.12) / 89.8 & 9.60 (0.13) / 78.3 & 12.19 (0.15) / 57.2 \\
& 100 & 1 & 4.79 (0.10) / 99.0 & 5.00 (0.10) / 96.9 & 5.52 (0.10) / 91.2 & 6.75 (0.11) / 75.9 \\
& & .95 & 9.47 (0.13) / 98.9 & 10.19 (0.14) / 93.5 & 12.03 (0.15) / 79.7 & 16.31 (0.17) / 54.0 \\
& 1000 & 1 & 3.87 (0.09) / 100.0 & 3.87 (0.09) / 100.0 & 3.93 (0.09) / 99.0 & 4.48 (0.09) / 90.1 \\
& & .95 & 57.76 (0.22) / 100.0 & 63.21 (0.22) / 93.1 & 80.89 (0.18) / 72.8 & 94.26 (0.10) / 44.1 \\
\addlinespace
Recurrent & 50 & 1 & 7.86 (0.12) / 87.2 & 8.51 (0.12) / 78.6 & 9.19 (0.13) / 68.3 & 10.31 (0.14) / 52.3 \\
& & .95 & 9.69 (0.13) / 88.2 & 10.92 (0.14) / 74.8 & 12.19 (0.15) / 60.3 & 14.25 (0.16) / 41.6 \\
& 100 & 1 & 5.31 (0.10) / 93.8 & 5.94 (0.11) / 86.7 & 6.85 (0.11) / 75.6 & 8.35 (0.12) / 56.9 \\
& & .95 & 8.86 (0.13) / 93.7 & 10.97 (0.14) / 77.4 & 13.51 (0.15) / 58.4 & 17.55 (0.17) / 36.5 \\
& 1000 & 1 & 4.02 (0.09) / 100.0 & 4.08 (0.09) / 99.0 & 4.40 (0.09) / 93.4 & 5.41 (0.10) / 73.9 \\
& & .95 & 42.12 (0.22) / 99.4 & 60.87 (0.22) / 74.9 & 81.53 (0.17) / 50.7 & 90.35 (0.13) / 28.6 \\
\bottomrule
\end{tabular}

\begin{minipage}{0.98\linewidth}
\footnotesize
\emph{Notes:} The selector uses constrained WLS when its nuisance pair is
admissible and $\widetilde\delta_{n,\ell}^C>c_{\mathrm{sel}}H_{n,\ell}^{-1/2}$; otherwise it
uses unrestricted WLS when admissible. If the selected calibration is
unavailable, the test does not reject. Each entry reports rejection percentage (Monte
Carlo standard error in percentage points) followed by the constrained-branch
percentage. All cells use $50{,}000$ replications at nominal level $5\%$.
\end{minipage}
\end{table}

\section{Construction of the Canadian flood-count series}
\label{app:flood-data}

\subsection{Canadian Disaster Database}
\label{app:cdd-data}

The CDD series uses the source underlying the flood series studied by
\citet{pei2023forecasting}. We select records with distinct
\texttt{EVENT\_ID} values, event category \texttt{Disaster}, and event type
\texttt{FL}. The analysis covers 1902--2022, the period from the earliest to
the latest selected flood-disaster record in the spreadsheet downloaded on
August 28, 2026.\footnote{%
\href{https://www.publicsafety.gc.ca/cnt/rsrcs/cndn-dsstr-dtbs/index-en.aspx?wbdisable=true}
{Public Safety Canada, \emph{The Canadian Disaster Database}}.
The live database is revised periodically; Public Safety Canada cautions that changes in
jurisdictional responsibilities, available information, and collection
practices can limit comparisons over time.}
Years with no selected event are retained as zeros.

\subsection{Historical Flood Events}
\label{app:hfe-data}

From the \texttt{historical\_flood\_event\_0} layer of HFE, we count each
\texttt{event\_id} once and retain the hydrometeorological causes coastal
storm, frazil, freshet, and heavy rain.\footnote{%
\href{https://open.canada.ca/data/en/dataset/fe83a604-aa5a-4e46-903c-685f8b0cc33c}
{Natural Resources Canada, \emph{Historical Flood Events} data catalogue};
\href{https://download-telecharger.services.geo.ca/pub/nrcan_rncan/Floods_Inondation/historical_flood_event_downloads/HFE_ProductSpecifications.pdf}
{product specifications}.}
We use the source snapshot observed on September 3, 2026, and restrict the
series to the complete 1900--2024 calendar-year window, excluding earlier
historical records and the partially covered year 2025 in that snapshot. Years with no selected
event are retained as zeros.

\end{document}